\newtheorem{theorem}{Theorem}[section]
\newtheorem{corollary}[theorem]{Corollary}
\newtheorem{lemma}[theorem]{Lemma}
\newtheorem{proposition}[theorem]{Proposition}
\theoremstyle{definition}
\newtheorem{definition}[theorem]{Definition}
\newtheorem{example}[theorem]{Example}
\newtheorem{problem}[theorem]{Problem}
\theoremstyle{remark}
\newtheorem{remark}[theorem]{Remark}
\renewcommand{\p@enumii}{}
\newcommand{\RR}{\mathbb{R}}
\def\<#1>{\langle #1 \rangle}
\newbox\onebox
\newcommand{\coherent}[1]{\mathbin{\setbox\onebox=\hbox{$=$}\lower0.7\ht%
\onebox\hbox{$\stackrel{#1}{=}$}}}
\newcommand{\acr}{\newline\indent}
\begin{document}

\title[Pseudometric spaces. From minimality to maximality]{Pseudometric spaces. From minimality to maximality in the groups of combinatorial self-similarities}

\author{Viktoriia Bilet}
\address{\textbf{Viktoriia Bilet}\acr
Department of Theory of Functions \acr
Institute of Applied Mathematics and Mechanics of NASU\acr
Dobrovolskogo str. 1, Slovyansk 84100, Ukraine}
\email{viktoriiabilet@gmail.com}

\author{Oleksiy Dovgoshey}
\address{\textbf{Oleksiy Dovgoshey}\acr
Department of Theory of Functions \acr
Institute of Applied Mathematics and Mechanics of NASU \acr
Dobrovolskogo str. 1, Slovyansk 84100, Ukraine \acr
Department of Mathematics and Statistics\acr
University of Turku\acr
Fin-20014, Turku, Finland}

\email{oleksiy.dovgoshey@gmail.com}

\makeatletter
\@namedef{subjclassname@2020}{%
  \textup{2020} Mathematics Subject Classification}
\makeatother

\subjclass[2020]{Primary 54E35, Secondary 20M05.}
\keywords{Combinatorial similarity, discrete pseudometric, equivalence relation, strongly rigid pseudometric, symmetric group}

\begin{abstract}
The group of combinatorial self-similarities of a pseudometric space \((X, d)\) is the maximal subgroup of the symmetric group  $\mathbf{Sym} (X)$ whose elements preserve the four-point equality $d(x,y)=d(u,v)$.  Let us denote by \(\mathcal{IP}\) the class of all pseudometric spaces $(X, d)$ for which every combinatorial self-similarity $\Phi\colon~X~\to~X$ satisfies the equality $d(x, \Phi(x))=0,$ but all permutations of metric reflection of $(X, d)$ are combinatorial self-similarities of this reflection.
The structure of \(\mathcal{IP}\) spaces is fully described.

\end{abstract}

\maketitle

\section{Introduction}

The present paper is aimed at describing some interconnections bet\-ween pseudometric spaces, combinatorial similarities, and equivalence relations, begun in \cite{Dov2019IEJA, DovBBMSSS2020, DLAMH2020, BD2206, BD2205}.

The concept of combinatorial similarity for pseudometrics and, more generally, for mappings deffined on the Cartesian square of a set, was introduced in \cite{DLAMH2020}. Theorem 3.1 \cite{DLAMH2020} contains a necessary and sufficient condition under which a mapping is combinatorially similar to a pseudometric. The strongly rigid pseudometrics and discrete ones, were characterized, up to combinatorial similarity, in Theorems 4.13 and 3.9 of \cite{DLAMH2020}, respectively.

The main result of \cite{BD2205}, Theorem 2.8, completely, describes the structure of semimetric spaces $(X,d)$ for which the group of combinatorial self-similarities of $(X,d)$ coincides with the symmetric group $\mathbf{Sym} (X)$.  In particular Theorem 2.8 shows that every permutation  of $X$ is a combinatorial self-similarity of $(X,d)$ if $d$ is discrete or strongly rigid.

The above mentioned results are the starting points of our research.
To describe the general direction of this research we note that the transition from the pseudometric space $(X,d)$ to its metric reflection $(X/{\coherent{0}}, \delta_d)$ naturally generates a homomorphism $H \colon \mathbf{Cs} (X,d) \to \mathbf{Cs} (X/{\coherent{0}}, \delta_d)$ of the groups of combinatorial self-similarities of $(X,d)$ and $(X/{\coherent{0}}, \delta_d)$. How are the algebraic properties of the homomorphism $H$ related to the metric properties of $(X,d)$?

In the present paper we give a metric characterization of pseudometric spaces $(X,d)$ for which the kernel of $H$ coincides with $\mathbf{Cs}(X, d)$ but the equality
$$\mathbf{Cs} (X/{\coherent{0}}, \delta_d) = \mathbf{Sym}(X/{\coherent{0}})$$
holds.

\section{Preliminaries}

Let us start from the classical notion of metric space.

A \textit{metric} on a set \(X\) is a function \(d\colon X^{2} \to \RR\) such that for all \(x\), \(y\), \(z \in X\):
\begin{enumerate}
\item \(d(x,y) \geqslant 0\) with equality if and only if \(x=y\), the \emph{positivity property};
\item \(d(x,y)=d(y,x)\), the \emph{symmetry property};
\item \(d(x, y)\leqslant d(x, z) + d(z, y)\), the \emph{triangle inequality}.
\end{enumerate}

In 1934 \DJ{}uro Kurepa \cite{Kur1934CRASP} introduced the pseudometric spaces which, unlike metric spaces, allow the zero distance between different points.

\begin{definition}\label{ch2:d2}
Let \(X\) be a set and let \(d \colon X^{2} \to \RR\) be a non-negative, symmetric function such that \(d(x, x) = 0\) for every \(x \in X\). The function \(d\) is a \emph{pseudometric} on \(X\) if it satisfies the triangle inequality.
\end{definition}

If \(d\) is a pseudometric on \(X\), we say that \((X, d)\) is a \emph{pseudometric} \emph{space}. We will denote by \(d(X^2)\) the range of the pseudometric \(d\),
\[
d(X^2) := \{d(x, y) \colon x, y \in X\}.
\]

\begin{definition}[\cite{DLAMH2020}]\label{d1.2}
Let \((X, d)\) and \((Y, \rho)\) be pseudometric spaces. The spaces \((X, d)\) and \((Y, \rho)\) are \emph{combinatorially similar} if there exist bijections \(\Psi \colon Y \to X\) and \(f \colon d(X^{2}) \to \rho(Y^{2})\) such that
\begin{equation}\label{d1.2:e1}
\rho(x,y) = f\bigl(d(\Psi(x), \Psi(y))\bigr)
\end{equation}
for all \(x\), \(y \in Y\). In this case, we will say that \(\Psi \colon Y \to X\) is a \emph{combinatorial similarity} and that \((X, d)\) and \((Y, \rho)\) are combinatorially similar pseudometric spaces.
\end{definition}

\begin{example}\label{ex1.3}
If \((X, d)\) and \((Y, \rho)\) are metric spaces, then every iso\-metry  \( X \to Y\) is a combinatorial similarity of \((X, d)\) and \((Y, \rho)\).
\end{example}

Thus the notion of combinatorial similarities can be considered as a generalization of the notion of the isometries of metric spaces.

\begin{remark}\label{r1.4}
The notion of isometry of metric spaces can be extended to pseudometric spaces in various non-equivalent ways. For example, John Kelley \cite{Kelley1965} define the isometries of pseudometric spaces \((X, d)\) and \((Y, \rho)\) as the distance-preserving surjections \(X \to Y\).
\end{remark}

In particular, we also will use the following generalization of isometrics.

\begin{definition}\label{defnew*}\cite{BD2206}
Let $(X, d)$ and $(Y,\rho)$ be pseudometric spaces. A mapping $\Phi: X\to Y$ is a \emph{pseudoisometry} of $(X, d)$ and $(Y, \rho)$ if:
\begin{enumerate}
\item  $\rho(\Phi(x), \Phi(y))=d(x, y)$ holds for all $x, y\in X;$
\item  For every $u\in Y$ there is $v\in X$ such that $\rho(\Phi(v), u)=0.$
\end{enumerate}
\end{definition}

We say that two pseudometric spaces are \emph{pseudoisometric} if there is a pseudoisometry of these spaces.

\smallskip

For every pseudometric spaces $(X, d)$ the set of all combinatorial self-similarities is a group with the function composition as a group operation.
The identity mapping,
\[
\operatorname{Id}_X \colon X \to X, \quad \operatorname{Id}_X(x) = x \text{ for every } x \in X,
\]
is the identity element of this group. If \(d \colon X^2 \to \RR\) is a metric, then we can characterize \(\operatorname{Id}_X\) as a unique mapping \(X \xrightarrow{f} X\) which satisfies the equality
\begin{equation}\label{e1.2}
d(x, f(x)) = 0
\end{equation}
for every \(x \in X\). For the case when the pseudometric \(d \colon X^2 \to \RR\) is not a metric, one can always find a bijection \(X \xrightarrow{f} X\) such that \(f \neq \operatorname{Id}_X\) but \eqref{e1.2} holds for every \(x \in X\).

\begin{example}\label{ex1.4}
Let \((X, d_0)\) be a pseudometric space endowed by the \emph{zero pseudometric}, \(d(x, y) = 0\) for all \(x\), \(y \in X\). Then \eqref{e1.2} holds for every \(x \in X\) and each \(X \xrightarrow{f} X\).
\end{example}

\begin{definition}\label{d1.4}
Let \((X, d)\) be a pseudometric space. A bijection \(f~\colon~X~\to~X\) is a \emph{pseudoidentity} if the equality
\[
d(x, f(x)) = 0
\]
holds for every \(x \in X\).
\end{definition}

\begin{remark}\label{r1.6}
It is clear that, for every pseudometric space \((X, d)\), the set of all pseudoidentities $ X \to X$ is a subgroup of the group of all combinatorial self-similarities of \((X, d)\).
\end{remark}

The combinatorial similarities of pseudometric spaces are the main type of morphisms studied in this paper.

 The groups of all combinatorial self-similarities and all pseudoidentities of a pseudometric space \((X, d)\) will be denoted by $\mathbf{Cs} (X, d)$ and $\mathbf{PI} (X, d)$ respectively. Thus, for every pseudometric space $(X, d)$ we have
 $$
\mathbf{PI} (X, d)\subseteq  \mathbf{Cs} (X, d) \subseteq \mathbf{Sym} (X),
 $$
 where $ \mathbf{Sym} (X)$ is the symmetric group of all permutations of the set $X$.

For every nonempty pseudometric space \((X, d)\), we define a binary relation \(\coherent{0(d)}\) on \(X\) by
\begin{equation}\label{ch2:p1:e1}
(x \coherent{0(d)} y) \Leftrightarrow (d(x, y) = 0),
\end{equation}
for all $x, y\in X.$

In the future, we will simply write \(\coherent{0}\) instead of \(\coherent{0(d)}\), when it is clear which \(d\) we are talking about.

The proof of the following proposition can be found in \cite[Chapter~4, Theorem~15]{Kelley1965}.

\begin{proposition}\label{ch2:p1}
Let \(X\) be a nonempty set and let \(d \colon X^{2} \to \RR\) be a pseudometric on \(X\). Then \({\coherent{0}}\) is an equivalence relation on \(X\) and, in addition, the function \(\delta_d\),
\begin{equation}\label{ch2:p1:e2}
\delta_d(\alpha, \beta) := d(x, y), \quad x \in \alpha \in X/{\coherent{0}}, \quad y \in \beta \in X/{\coherent{0}},
\end{equation}
is a correctly defined metric on the quotient set \(X/{\coherent{0}}\).
\end{proposition}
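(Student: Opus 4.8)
The plan is to verify the three defining properties of an equivalence relation for \({\coherent{0}}\), then to check that formula \eqref{ch2:p1:e2} does not depend on the choice of representatives, and finally to transfer the metric axioms from \(d\) to \(\delta_d\).

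First I would establish that \({\coherent{0}}\) is an equivalence relation on \(X\). Reflexivity is immediate from \(d(x,x)=0\), and symmetry of \({\coherent{0}}\) follows from the symmetry property \(d(x,y)=d(y,x)\). For transitivity, suppose \(d(x,y)=0\) and \(d(y,z)=0\); then the triangle inequality together with non-negativity gives \(0\leqslant d(x,z)\leqslant d(x,y)+d(y,z)=0\), whence \(d(x,z)=0\), i.e.\ \(x\coherent{0}z\).

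Next I would show that \(\delta_d\) is correctly defined. The key observation is the Lipschitz-type estimate
\[
|d(x,y)-d(x',y')|\leqslant d(x,x')+d(y,y'),
\]
valid for all \(x,x',y,y'\in X\), which is obtained by applying the triangle inequality twice, passing from \(d(x,y)\) to \(d(x',y')\) through the intermediate points \(x'\) and \(y'\), and then using symmetry. If \(x,x'\) lie in the same class \(\alpha\in X/{\coherent{0}}\) and \(y,y'\) lie in the same class \(\beta\in X/{\coherent{0}}\), then \(d(x,x')=d(y,y')=0\), so the estimate forces \(d(x,y)=d(x',y')\); thus the value \(\delta_d(\alpha,\beta)\) is independent of the representatives chosen. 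This is the only step with any subtlety; everything else is a direct transcription of the pseudometric axioms.

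Finally I would verify the metric axioms for \(\delta_d\). Non-negativity and symmetry of \(\delta_d\) are inherited at once from the corresponding properties of \(d\), and the triangle inequality for \(\delta_d\) follows by selecting representatives and invoking the triangle inequality for \(d\). The only clause genuinely requiring the passage to the quotient is the positivity property: \(\delta_d(\alpha,\beta)=0\) means \(d(x,y)=0\) for \(x\in\alpha\), \(y\in\beta\), i.e.\ \(x\coherent{0}y\), which by the definition of the quotient set \(X/{\coherent{0}}\) is equivalent to \(\alpha=\beta\). Hence \(\delta_d\) is a metric on \(X/{\coherent{0}}\).
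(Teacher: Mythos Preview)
Your proof is correct and is the standard direct verification. The paper does not actually supply its own argument for this proposition: it simply cites \cite[Chapter~4, Theorem~15]{Kelley1965}. So there is nothing to compare against beyond noting that your write-up makes the proposition self-contained, whereas the paper outsources it to Kelley.
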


In what follows we will say that the metric space \((X/{\coherent{0}}, \delta_d)\) is the \textit{metric reflection} of $(X,d).$

\begin{remark}\label{remnew}
It was shown in Theorem~3.3 of \cite{BD2206} that pseudometric spaces $(X, d)$ and $(Y, \rho)$ are pseudoisometric if and only if the metric reflections $(X/{\coherent{0}}, \delta_d)$ and $(Y/{\coherent{0}}, \delta_{\rho})$ are isometric metric spaces.
\end{remark}

Let us define a class $\mathcal{IP}$ of pseudometric spaces as follows.

 \begin{definition}\label{d1.***}
A pseudometric space \((X, d)\) belongs $\mathcal{IP}$ if the equa\-li\-ties
\begin{equation}\label{d1.***-1}
\mathbf{Cs}(X,d) = \mathbf{PI} (X, d)
\end{equation}
and
\begin{equation}\label{d1.***-2}
\mathbf{Cs}(X/ \coherent{0}, \delta_d)= \mathbf{Sym} (X/\coherent{0})
\end{equation}
hold.
\end{definition}


Thus, $(X,d)\in\mathcal{IP}$ holds if and only if the group $\mathbf{Cs}(X,d)$ is as small as possible, but the group $\mathbf{Cs}(X/\coherent{0},\delta_d)$ is as large as possible.

\begin{example}\label{ex1.**}
Let $(X, d)$ be a nonempty metric space.  Then $(X,d)\in \mathcal{IP}$ holds if and only if $|X|=1$. Indeed, the implication
\begin{equation*}
(|X|=1)\Rightarrow ((X, d)\in\mathcal{IP})
\end{equation*}
is evidently valid. Let $(X, d)$ belong to $\mathcal{IP}.$ To prove the equality $|X|=1$ we note that $\mathbf{PI}(X,d)$ contains the mapping $\mathrm{Id}_{X}: X\to X$ only and that $\mathbf{Cs}(X, d)$ and $\mathbf{Cs}(X/ \coherent{0}, \delta_d)$ are isomorphic groups because $(X,d)$ is a metric space. Hence, \eqref{d1.***-1} implies the equality $|\mathbf{Cs}(X/ \coherent{0}, \delta_d)|=1.$ Using the last equality and \eqref{d1.***-2} we obtain $|\mathbf{Sym}(X/ \coherent{0})|=1$ which is possible if and only if $|X/ \coherent{0}|=1.$
Since $d$ is a metric we also have $|X|=|X/ \coherent{0}|.$ The equality $|X|=1$ follows.
\end{example}

The main goal of the paper is to describe the structure of pseudometric spaces belonging to $\mathcal{IP}$. To do this, we introduce into consideration pseudometric generalizations of some well-known classes of metric spaces.

Let \((X, d)\) be a metric space. Recall that the metric \(d\) is said to be \emph{strongly rigid} if, for all \(x\), \(y\), \(u\), \(v \in X\), the condition
\begin{equation}\label{e1.3}
d(x, y) = d(u, v) \neq 0
\end{equation}
implies
\begin{equation}\label{e3.6}
(x = u \text{ and } y = v) \text{ or } (x = v \text{ and } y = u).
\end{equation}
(Some properties of strongly rigid metric spaces are described in \cite{Broughan73, Hattori90, Janos1972, JanosMartin78, Ishiki2022, Martin1977, BDKP2017AASFM, DLAMH2020, DS2021aa, Rouyer2011}.)

The concept of strongly rigid metric can be naturally generalized to the concept of \emph{strongly rigid pseudometric} what was done in paper \cite{DLAMH2020}.

\begin{definition}\label{d4.3}
Let \((X, d)\) be a pseudometric space. The pseudometric \(d\) is \emph{strongly rigid} if every metric
subspace of $(X, d)$ is strongly rigid.
\end{definition}

\begin{remark}
A pseudometric $d: X^{2}\to\mathbb R$ is \emph{strongly rigid} if and only if \eqref{e1.3} implies
\begin{equation}\label{d4.3:e2}
(d(x, u) = d(y, v) = 0) \text{ or } (d(x, v) = d(y, u) = 0)
\end{equation}
for all \(x\), \(y\), \(u\), \(v \in X\).
\end{remark}

\begin{example}\label{r3.2}
The implication \(\eqref{e1.3} \Rightarrow \eqref{d4.3:e2}\) is vacuously true for the zero pseudometric \(d \colon X^{2} \to \RR\). Hence, the zero pseudometric  is strongly rigid.
\end{example}

The well-known example of pseudometric space is a seminormed vector space. Recall that a \emph{semimorm} on a vector space $X$ is a function \linebreak $\| \cdot \| \colon X  \to \mathbb{R}$ such that $\| x+y \| \leqslant \|x\| + \|y\|$ and $\| \alpha x \| = |\alpha| \|x\|$ for all $x, y \in X$ and every scalar  $\alpha$. If $(X, \|\cdot\|)$ is a seminormed vector space then the function $\rho \colon X^2 \to \mathbb{R}$,
$$\rho(x,y) = \|x - y\|,$$ is a pseudometric on $X$.

In the following example we construct a strongly rigid pseudometric subspace of a seminormed real vector space.

\begin{example}\label{r3.2**}
Let $(\mathbb{R}^2, \|\cdot\|)$ be a two-demensional seminormed real vector space endowed with the seminorm $\| \cdot \|$ such that $\| \<x,y>\| = |x|$ for each $\<x,y > \in \mathbb{R}^2$.
The field $\mathbb{R}$ of real numbers can be also considered as a vector space over the field $\mathbb{Q}$ of rational numbers. Let $H$ be a maximal linearly independent over $\mathbb{Q}$ subset of $\mathbb{R}$. If we define a subset $X$ of $\mathbb{R}^2$  as
$$
X=\{ \<x,y > \in \mathbb{R}^2 \colon x \in H \textrm{ and } y \in \mathbb{R}\},
$$
then $X$ is a strongly rigid pseudometric subspace of the seminormed space $(\mathbb{R}^2, \|\cdot \|)$.
\end{example}

We say that a metric $d \colon X^2  \to \mathbb{R}$ is \emph{discrete} if the inequality
\begin{equation*}
|d(X^2)| \leqslant 2
\end{equation*}
holds, where $|d(X^2)|$ is the cardinal number of the set $d(X^2)$.

\begin{remark}\label{rem_1.9_whrn_all}
The standard definition of \emph{discrete metric} can be formulated as:
``The metric on X is discrete if the distance from each point of X to every
other point of X is one.'' (See, for example, \cite[p.~4]{Sea2007}.)
\end{remark}

The following definition is a suitable reformulation of the correspon\-ding concept from \cite{DLAMH2020}.

\begin{definition}\label{defdis} Let $(X, d)$ be a pseudometric space. The pseudometric $d$ is \emph{discrete} if all metric subspaces of $(X, d)$ are discrete. 
\end{definition}

\begin{remark}\label{r3.xx}
If is easy to see that pseudometric \(d \colon X^{2} \to \RR\) is discrete iff $|d(X^{2})| \leqslant 2$.
\end{remark}

\begin{definition}\label{defrect}
A pseudometric space $(X,d)$ is a \emph{pseudorectangle} if all three-point metric subspaces of $(X,d)$ are strongly rigid and isometric and, in addition, there is a four-point metric subspace $Y$ of $(X, d)$ such that for every $x\in X$ we can find $y\in Y$ satisfying $d(x,y)=0.$
\end{definition}

It is easy to see that the metric reflection $(X/\coherent{0},\delta_d)$ of every pseudorectangle $(X, d)$ is a four-point metric space and, in addition, it can be shown that this metric reflection is combinatorially similar to the vertex set of Euclidean non-square rectangle.

\medskip

The paper is organized as follows.

\smallskip

In the next section we recall some known interconnections between equivalence relations, partitions of sets and discrete pseudometrics. 
A simple sufficient condition for the equality $\mathbf{Cs} (X, d)=\mathbf{PI} (X, d)$ is found in Corollary~\ref{cornew} of Proposition~\ref{p1.8}.

The main results of the paper are given in Sections~3 and 4.

A complete description of pseudometric spaces $(X, d)$ that satisfy the equality $\mathbf{Cs}(X/ \coherent{0}, \delta_d)= \mathbf{Sym} (X/\coherent{0})$ is given in Theorem~\ref{Analogt2.4}.
This theorem together with Corollary~\ref{cornew} allow us to characterize $\mathcal{IP}$-spaces in Theorem~\ref{Th3.6}.

 A combinatorial characterization of fibers of pseudometrics is proved in Theorems~\ref{p4.6}, \ref{analogTh} and \ref{thnew} for strongly rigid spaces, discrete spaces and pseudorectangles, respectively. As a corollary of these theorems we obtain a new description of $\mathcal{IP}$-spaces in Theorem~\ref{ThCh4}. In Proposition~\ref{p4.9} we show that pseudorectangles or strongly rigid spaces $(X, d)$ and $(Y, \rho)$ are combinatorially similar if and only if  the binary relations $\coherent{0(d)}$ and $\coherent{0(\rho)}$ are the same. The characteristic properties of $\coherent{0}$ are described in Proposition~\ref{propCh4} for pseudorectangles and strongly rigid pseudometric spaces.

The final result of the paper, Theorem~\ref{ThCh4_4.8}, characterizes the class of discrete pseudometric spaces, strongly rigid pseudometric spaces and pseudorectangles in terms of same extremal properties of these classes.


\section{Partitions of sets}

Let \(U\) be a set. A \emph{binary relation} on \(U\) is a subset of the Cartesian square
\[
U^2 = U \times U = \{\<x, y>\colon x, y \in U\}.
\]
A binary relation \(R \subseteq U^{2}\) is an \emph{equivalence relation} on \(U\) if the following conditions hold for all \(x\), \(y\), \(z \in U\):
\begin{enumerate}
\item \(\<x, x> \in R\), the \emph{reflexivity} property;
\item \((\<x, y> \in R) \Leftrightarrow (\<y, x> \in R)\), the \emph{symmetry} property;
\item \(((\<x, y> \in R) \text{ and } (\<y, z> \in R)) \Rightarrow (\<x, z> \in R)\), the \emph{transitivity} property.
\end{enumerate}


If \(R\) is an equivalence relation on \(U\), then an \emph{equivalence class} is a subset of \(U\) having the form
\begin{equation}\label{e1.1}
[a]_R = \{x \in U \colon \<x, a> \in R\}
\end{equation}
for some \(a \in U\). The \emph{quotient set} \(U / R\) of \(U\) with respect to \(R\) is the set of all equivalence classes \([a]_R\).

Let \(X\) be a nonempty set and \(P = \{X_j \colon j \in J\}\) be a set of nonempty subsets of \(X\). The set \(P\) is a \emph{partition} of \(X\) with the \emph{blocks} \(X_j\), \(j \in J\), if \(\cup_{j \in J} X_j = X\) and \(X_{j_1} \cap X_{j_2} = \varnothing\) for all distinct \(j_1\), \(j_2 \in J\).

\begin{definition}\label{d1.5}
Partitions \(P\) and \(Q\) of a set \(X\) are \emph{equal}, \(P = Q\), if every block of \(P\) is a block of \(Q\) and vice versa.
\end{definition}

Every partition \(P\) of a set \(X\) is a subset of the power set \(2^X\), \(P \subseteq 2^X\), and each block of \(P\) is a point of \(2^X\). Thus, Definition~\ref{d1.5} simply means that \(P = Q\) holds if and only if \(P\) and \(Q\) are the same subsets of \(2^X\). Consequently, \(P = Q\) holds if and only if \(P \subseteq Q\) and \(Q \subseteq P\). The following lemma states that any of the above inclusions is sufficient for \(P = Q\).

\begin{lemma}\label{l2.2}
Let \(P = \{X_j \colon j \in J\}\) and \(Q = \{Y_i \colon i \in I\}\) be partitions of a set \(X\). Then the inclusion \(P \subseteq Q\) (\(Q \subseteq P\)) implies the equality
\begin{equation}\label{l2.2:e1}
P = Q.
\end{equation}
\end{lemma}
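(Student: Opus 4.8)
The plan is to show that the inclusion $P \subseteq Q$ forces $Q \subseteq P$ as well, after which Definition~\ref{d1.5} gives $P = Q$; the case $Q \subseteq P$ is then symmetric, so it suffices to treat $P \subseteq Q$. First I would fix an arbitrary block $Y_i \in Q$ and aim to prove $Y_i \in P$, i.e.\ that $Y_i$ coincides with some block $X_j$ of $P$. Since $Y_i$ is nonempty, pick a point $x \in Y_i$. Because $P$ is a partition of $X$ and $x \in X$, there is a (unique) block $X_j \in P$ with $x \in X_j$. By the hypothesis $P \subseteq Q$, this $X_j$ is also a block of $Q$, so both $X_j$ and $Y_i$ are blocks of the partition $Q$ containing the common point $x$; since distinct blocks of a partition are disjoint, we conclude $X_j = Y_i$. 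Hence $Y_i = X_j \in P$, which proves $Q \subseteq P$ and therefore $P = Q$.

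The main (and really only) subtlety to be careful about is the direction in which disjointness is applied: we know $X_j \cap Y_i \ne \varnothing$ (they share $x$), and we need a partition in which \emph{both} sets are blocks in order to invoke ``distinct blocks are disjoint'' and conclude they are equal. This is exactly what the inclusion $P \subseteq Q$ supplies, placing $X_j$ inside $Q$ alongside $Y_i$. It is worth noting that one cannot run the argument purely inside $P$, since a priori $Y_i$ need not be a block of $P$ — that is the whole content of what we are proving. The remaining steps (existence of $X_j$ containing $x$, uniqueness, nonemptiness of $Y_i$) are immediate from the definition of partition and require no computation.

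\begin{proof}
By symmetry, it suffices to prove that $P \subseteq Q$ implies \eqref{l2.2:e1}. Since $P \subseteq Q$ already holds by hypothesis, and since $P = Q$ is equivalent to $P \subseteq Q$ and $Q \subseteq P$, we need only verify the inclusion $Q \subseteq P$, i.e.\ that every block of $Q$ is a block of $P$.

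Let $Y_i$ be an arbitrary block of $Q$. As $Y_i$ is nonempty, choose $x \in Y_i$. Since $P = \{X_j \colon j \in J\}$ is a partition of $X$ and $x \in X$, there exists $j \in J$ with $x \in X_j$. By the assumed inclusion $P \subseteq Q$, the set $X_j$ is a block of $Q$. Thus $X_j$ and $Y_i$ are both blocks of the partition $Q$, and $x \in X_j \cap Y_i$, so $X_j \cap Y_i \neq \varnothing$. Since distinct blocks of a partition are disjoint, it follows that $X_j = Y_i$. Hence $Y_i = X_j \in P$, which proves $Q \subseteq P$ and therefore $P = Q$.
\end{proof}
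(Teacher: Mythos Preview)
Your proof is correct. It differs from the paper's in a small but clean way: the paper argues by contradiction, supposing $P \subsetneq Q$, picking a block $Y_{i_0} \in Q \setminus P$, and deriving the contradiction $X = \bigcup_{j} X_j \subseteq \bigcup_{i \neq i_0} Y_i = X \setminus Y_{i_0} \subsetneq X$ from the covering property of partitions. You instead argue directly, using the \emph{disjointness} property: given $Y_i \in Q$, you locate the block $X_j \in P$ containing a chosen point of $Y_i$, note that $X_j \in Q$ by hypothesis, and conclude $X_j = Y_i$ since two intersecting blocks of $Q$ must coincide. Both arguments are elementary; yours avoids contradiction and is perhaps slightly more transparent, while the paper's version makes the role of the covering condition explicit.
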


\begin{proof}
Let
\begin{equation}\label{l2.2:e2}
P \subseteq Q
\end{equation}
hold. Then, for every \(j_1 \in J\) there is \(i_1 \in I\) such that \(X_{j_1} = Y_{i_1}\). Suppose that inclusion \eqref{l2.2:e2} is strict. Then the set $P$ is a proper subset of the set $Q$. Since every element of $Q$ is a block $Y_i,$ $i \in I,$ there is \(i_0  \in I\) such that
\begin{equation}\label{l2.2:e3}
P \subseteq \{Y_i \colon i \in I \setminus \{i_0\}\}.
\end{equation}
Now from \eqref{l2.2:e3} and the definition of partitions of sets we obtain the contradiction,
\[
X = \bigcup_{j \in J} X_j \subseteq \bigcup_{\substack{i \in I\\ i \neq i_0}} Y_i = X \setminus Y_{i_0} \varsubsetneq X.
\]
Equality \eqref{l2.2:e1} follows.
\end{proof}

There exists the well-known, one-to-one correspondence between the equivalence relations on sets and the partitions of sets (see, for example, \cite[Chapter~II, \S{}~5]{KurMost} or \cite[Theorem~1]{Ore1942}).

\begin{proposition}\label{p1.6}
Let \(X\) be a nonempty set. If \(P = \{X_j \colon j \in J\}\) is a partition of \(X\) and \(R\) is a binary relation on \(X\) such that the logical equivalence
\begin{equation*}
\bigl(\<x, y> \in R\bigr) \Leftrightarrow \bigl(\exists j \in J \colon x \in X_j \text{ and } y \in X_j\bigr)
\end{equation*}
is valid for every \(\<x, y> \in X^{2}\),
then \(R\) is an equivalence relation on \(X\) with the quotient set \(P\) and it is the unique equivalence relation on \(X\) having \(P\) as the quotient set. Conversely, if \(R\) is an equivalence relation on \(X\), then there is the unique partition \(P\) of \(X\) such that \(P\) is the quotient set of \(X\) with respect to \(R\).
\end{proposition}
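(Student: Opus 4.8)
The plan is to prove the four assertions in the order in which they are stated. First I would verify that the relation $R$ defined by the displayed logical equivalence is an equivalence relation. Reflexivity is immediate: since the blocks of $P$ cover $X$, every $x \in X$ lies in some $X_j$, whence $\langle x, x\rangle \in R$. Symmetry is built into the defining condition, which is manifestly symmetric in $x$ and $y$. For transitivity, if $\langle x, y\rangle \in R$ and $\langle y, z\rangle \in R$, choose $j_1, j_2 \in J$ with $x, y \in X_{j_1}$ and $y, z \in X_{j_2}$; then $y \in X_{j_1} \cap X_{j_2}$, so the disjointness of distinct blocks forces $j_1 = j_2$, and hence $x, z \in X_{j_1}$, i.e.\ $\langle x, z\rangle \in R$.

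Next I would identify the quotient set. For $a \in X$ let $j(a)$ be the unique index with $a \in X_{j(a)}$; unwinding the definitions gives $[a]_R = \{x \in X \colon x \in X_{j(a)}\} = X_{j(a)}$, so every $R$-class is a block of $P$, that is, $X/R \subseteq P$. Since $R$ is an equivalence relation, $X/R$ is itself a partition of $X$, so Lemma~\ref{l2.2} yields $X/R = P$. (Alternatively, one checks directly that each block $X_j$, being nonempty, equals $[a]_R$ for any $a \in X_j$, which gives the reverse inclusion as well.)

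For the uniqueness of $R$, I would show that any equivalence relation $S$ on $X$ whose quotient set equals $P$ must coincide with $R$, because membership in $S$ is completely determined by $P$. Indeed, if $\langle x, y\rangle \in S$ then $x, y \in [x]_S$, and $[x]_S$ is a block of $X/S = P$; conversely, if $x$ and $y$ lie in a common block $B$ of $P = X/S$, then $B = [a]_S$ for some $a$, so $\langle x, a\rangle, \langle y, a\rangle \in S$, and symmetry together with transitivity give $\langle x, y\rangle \in S$. Thus $\langle x, y\rangle \in S$ holds exactly when $x$ and $y$ share a block of $P$, which is precisely the defining condition of $R$; hence $S = R$.

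Finally, for the converse, given an equivalence relation $R$ I would check that $P := X/R$ is a partition: its members are nonempty (each class contains its representative) and cover $X$ (since $x \in [x]_R$), while two classes $[a]_R$ and $[b]_R$ are either equal or disjoint by the standard argument using transitivity. Then $P$ is a partition of $X$ and it is its own quotient set witnessed by $R$; uniqueness is immediate, since ``the quotient set of $X$ with respect to $R$'' denotes the single set $X/R$. None of these steps presents a genuine obstacle; the only point requiring a little care is the uniqueness argument in the third paragraph, where one must keep in mind that the blocks of a quotient set are themselves equivalence classes in order to reconstruct the relation from the partition.
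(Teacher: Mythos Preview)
Your proof is correct and follows the standard textbook argument. Note, however, that the paper does not supply its own proof of this proposition: it is stated as a well-known fact with references to Kuratowski--Mostowski and Ore, so there is no ``paper's approach'' to compare against. Your use of Lemma~\ref{l2.2} to conclude $X/R = P$ from the inclusion $X/R \subseteq P$ is a nice touch that ties the argument to the surrounding material, though as you observe the reverse inclusion is also immediate directly.
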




The next proposition shows, in particular, that if $\Psi: Y\to X$ is a combinatorial similarity of pseudometric spaces $(X, d)$ and $(Y, \rho),$ then the equivalence classes of the relation $\coherent{0(d)}$ are the images of the equivalence classes of $\coherent{0(\rho)}$ under mapping $\Psi.$

\begin{proposition}\label{p1.8}
Let \((X, d)\) and \((Y, \rho)\) be nonempty pseudometric spaces and let $\Psi: Y\to X$ be a combinatorial similarity of these spaces. If $f: d(X^{2})\to\rho (Y^{2})$ is a bijection such that
\begin{equation}\label{p1.8:e0}
\rho(x, y) = f\bigl(d(\Psi(x)), d(\Psi(y))\bigr)
\end{equation}
for all \(x\), \(y \in Y\), then the equalities
\begin{equation}\label{p1.8:e1}
f(0) = 0
\end{equation}
and
\begin{equation}\label{p1.8:e2}
\left(X/ \coherent{0(d)}\right) = \{\Psi(Y_j) \colon Y_j \in Y/ \coherent{0(\rho)}\}
\end{equation}
hold.
\end{proposition}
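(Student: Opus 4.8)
The plan is to derive \eqref{p1.8:e1} by a single evaluation of \eqref{p1.8:e0}, then use it together with the injectivity of $f$ to show that $\Psi$ carries the relation $\coherent{0(\rho)}$ onto the relation $\coherent{0(d)}$ blockwise, and finally to promote one inclusion of partitions to the equality \eqref{p1.8:e2} via Lemma~\ref{l2.2}.

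First I would fix any point $y\in Y$; this is possible since $Y\neq\varnothing$. As $d$ is a pseudometric, $d(\Psi(y),\Psi(y))=0$, so $0\in d(X^{2})$ and $f(0)$ is defined. Substituting $x=y$ in \eqref{p1.8:e0} gives
\[
0=\rho(y,y)=f\bigl(d(\Psi(y),\Psi(y))\bigr)=f(0),
\]
which is \eqref{p1.8:e1}.

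Next I would establish the key equivalence: for all $x,y\in Y$,
\[
\bigl(\rho(x,y)=0\bigr)\Longleftrightarrow\bigl(d(\Psi(x),\Psi(y))=0\bigr),
\]
i.e.\ $x\coherent{0(\rho)}y$ holds if and only if $\Psi(x)\coherent{0(d)}\Psi(y)$ holds. The implication ``$\Leftarrow$'' is immediate from \eqref{p1.8:e0} and $f(0)=0$; for ``$\Rightarrow$'' note that $f\bigl(d(\Psi(x),\Psi(y))\bigr)=\rho(x,y)=0=f(0)$, so $d(\Psi(x),\Psi(y))=0$ because $f$ is injective (here injectivity of $f$, not merely $f(0)=0$, is used). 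Consequently, for any block $Y_j\in Y/\coherent{0(\rho)}$ and any chosen $y_0\in Y_j$ we have $Y_j=\{y\in Y\colon \rho(y,y_0)=0\}$, and since $\Psi$ is a bijection of $Y$ onto $X$,
\[
\Psi(Y_j)=\{\Psi(y)\colon d(\Psi(y),\Psi(y_0))=0\}=\{x\in X\colon d(x,\Psi(y_0))=0\}=[\Psi(y_0)]_{\coherent{0(d)}},
\]
so $\Psi(Y_j)$ is a block of $X/\coherent{0(d)}$. Therefore $\{\Psi(Y_j)\colon Y_j\in Y/\coherent{0(\rho)}\}\subseteq X/\coherent{0(d)}$.

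Finally, since $\Psi$ is bijective and $\{Y_j\colon Y_j\in Y/\coherent{0(\rho)}\}$ is a partition of $Y$, the family $\{\Psi(Y_j)\colon Y_j\in Y/\coherent{0(\rho)}\}$ is a partition of $X$; and $X/\coherent{0(d)}$ is a partition of $X$ by Proposition~\ref{ch2:p1} and Proposition~\ref{p1.6}. The inclusion just proved and Lemma~\ref{l2.2} then yield \eqref{p1.8:e2}. I do not expect a genuine obstacle in this argument; the only points requiring care are that $0$ indeed belongs to the domain $d(X^{2})$ of $f$ (ensured by $X\neq\varnothing$) and that the forward direction of the key equivalence genuinely needs the injectivity of $f$.
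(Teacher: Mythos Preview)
Your proof is correct and follows essentially the same strategy as the paper: evaluate \eqref{p1.8:e0} at a repeated point to get $f(0)=0$, then use the injectivity of $f$ to obtain the equivalence $\rho(x,y)=0 \Leftrightarrow d(\Psi(x),\Psi(y))=0$, and from this deduce \eqref{p1.8:e2}. The only difference is in the final step: the paper invokes Proposition~\ref{p1.6} to say that the equivalence of relations forces equality of the corresponding partitions, whereas you compute directly that each $\Psi(Y_j)$ is a $\coherent{0(d)}$-class and then upgrade the resulting inclusion of partitions to an equality via Lemma~\ref{l2.2}; both finishes are equally valid.
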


\begin{proof}
Using Definition~\ref{ch2:d2} and equality \eqref{p1.8:e0} with \(y = x\) we obtain
\[
0 = \rho(x, x) = f\bigl(d(\Psi(x)), d(\Psi(x))\bigr) = f(0)
\]
that implies \eqref{p1.8:e1}.

To prove equality \eqref{p1.8:e2}, we note that \(\{\Psi(Y_j) \colon Y_j \in Y/ \coherent{0(\rho)}\}\) is a partition of the set \(X\), because \(\Psi \colon Y \to X\) is bijective and \(Y/ \coherent{0(\rho)}\) is a partition of \(Y\) by Proposition~\ref{ch2:p1}. Now Proposition~\ref{p1.6} and \eqref{ch2:p1:e1} imply that \eqref{p1.8:e2} holds if and only if
\begin{equation}\label{p1.8:e3}
\bigl(d(x, y) = 0\bigr) \Leftrightarrow \bigl(\rho (\Psi^{-1}(x), \Psi^{-1}(y)) = 0\bigr)
\end{equation}
for all \(x\), \(y \in X\). Logical equivalence \eqref{p1.8:e3} is valid for all \(x\), \(y \in X\) if and only if
\begin{equation}\label{p1.8:e4}
\bigl(d(\Psi(u), \Psi(v)) = 0\bigr) \Leftrightarrow \bigl(\rho(u, v) = 0\bigr)
\end{equation}
for all \(u\), \(v \in Y\). Since \(f \colon d(X^2) \to \rho(Y^2)\) is bijective, equality~\eqref{p1.8:e1} implies that \(d(\Psi(u), \Psi(v)) = 0\) holds if and only if \(f(d(\Psi(u), \Psi(v))) = 0\). Hence, \eqref{p1.8:e3} can be written as
\begin{equation}\label{p1.8:e5}
\bigl(f(d(\Psi(u), \Psi(v))) = 0\bigr) \Leftrightarrow \bigl(\rho(u, v) = 0\bigr).
\end{equation}
Now the validity of \eqref{p1.8:e5} follows from \eqref{p1.8:e0}.
\end{proof}

For the case $(X, d)=(Y, \rho),$ Proposition~\ref{p1.8} implies that the combinatorial self-semilarities preserve the equivalence relation $\coherent{0(d)}.$

\begin{corollary}\label{cor:p1.8}
Let $d$ and $\rho$ be two combinatorially similar pseudometrics defined on the same nonempty set. Then the binary relations $\coherent{0 (d)}$ and $\coherent{0 (\rho)}$ are equal.
\end{corollary}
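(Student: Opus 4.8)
The plan is to observe that the hypothesis of the corollary is precisely the statement that $\operatorname{Id}_X$ realizes a combinatorial similarity, and then to feed this into Proposition~\ref{p1.8} with $\Psi = \operatorname{Id}_X$.

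First I would spell out what it means for $d$ and $\rho$, two pseudometrics on one and the same nonempty set $X$, to be combinatorially similar: by Definition~\ref{d1.2} this says that $\operatorname{Id}_X \colon X \to X$ is a combinatorial similarity of $(X, d)$ and $(X, \rho)$, i.e.\ that there is a bijection $f \colon d(X^{2}) \to \rho(X^{2})$ with $\rho(x,y) = f\bigl(d(x,y)\bigr)$ for all $x, y \in X$. Then I would apply Proposition~\ref{p1.8}, taking its $(Y, \rho)$ to be $(X, \rho)$ and its $\Psi$ to be $\operatorname{Id}_X$. Conclusion \eqref{p1.8:e1} of that proposition yields $f(0) = 0$, and conclusion \eqref{p1.8:e2} yields
$$
X/{\coherent{0(d)}} = \{\operatorname{Id}_X(Y_j) \colon Y_j \in X/{\coherent{0(\rho)}}\} = X/{\coherent{0(\rho)}},
$$
so the quotient sets of $X$ with respect to $\coherent{0(d)}$ and $\coherent{0(\rho)}$ are one and the same partition of $X$.

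To finish, I would invoke Proposition~\ref{p1.6}: by Proposition~\ref{ch2:p1} both $\coherent{0(d)}$ and $\coherent{0(\rho)}$ are equivalence relations on $X$, and they now have the same partition as their common quotient set; since the correspondence between equivalence relations on $X$ and partitions of $X$ is one-to-one, it follows that $\coherent{0(d)} = \coherent{0(\rho)}$. Alternatively, and even more directly, from $f(0) = 0$ and the injectivity of $f$ one gets, for all $x, y \in X$, the chain $d(x,y) = 0 \Leftrightarrow f\bigl(d(x,y)\bigr) = 0 \Leftrightarrow \rho(x,y) = 0$, which is exactly the assertion $\coherent{0(d)} = \coherent{0(\rho)}$ in view of \eqref{ch2:p1:e1}.

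There is essentially no obstacle in this argument; the single point that needs care is reading ``combinatorially similar pseudometrics on the same set'' as the assertion that the identity map serves as the similarity, so that Proposition~\ref{p1.8} is applicable with $\Psi = \operatorname{Id}_X$ (for a general similarity $\Psi \neq \operatorname{Id}_X$ between $(X,d)$ and $(X,\rho)$ one would only get that the two partitions correspond under $\Psi$, not that they coincide). Granting that reading, the corollary is an immediate specialization of Proposition~\ref{p1.8}.
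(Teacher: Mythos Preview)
Your argument is exactly the paper's intended one: the corollary is stated without proof immediately after Proposition~\ref{p1.8}, so the specialization $\Psi = \operatorname{Id}_X$ that you carry out is the whole story. Your closing caveat is not merely a nicety but a genuine observation---under Definition~\ref{d1.2} read literally, combinatorial similarity of $(X,d)$ and $(X,\rho)$ only guarantees \emph{some} bijection $\Psi\colon X\to X$, and for $\Psi\neq\operatorname{Id}_X$ the conclusion can fail (take $X=\{1,2,3\}$ with $d(1,2)=0$, $d(1,3)=d(2,3)=1$ and $\rho(2,3)=0$, $\rho(1,2)=\rho(1,3)=1$: these are combinatorially similar via a permutation, yet $\coherent{0(d)}\neq\coherent{0(\rho)}$)---so the identity reading you adopt is the only one under which the statement is both true and an honest corollary of Proposition~\ref{p1.8}.
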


The next corollary gives a simple sufficient condition under which equality~\eqref{d1.***-1} holds.

\begin{corollary}\label{cornew}
Let $(X, d)$ be a nonempty pseudometric space and let $\{X_{j}: j\in J\}$ be a partition of $X$corresponding the equivalence relation $\coherent{0(d)}.$ If distinct blocks of this partition have different numbers of points, $|X_{j_1}|\ne |X_{j_2}|$ for different $j_1, j_2\in J,$ then the equality
\begin{equation*}
\mathbf{Cs}(X, d)=\mathbf{PI}(X, d)
\end{equation*}
holds.
\end{corollary}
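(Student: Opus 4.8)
The plan is as follows. Since $\mathbf{PI}(X,d)\subseteq\mathbf{Cs}(X,d)$ always holds by Remark~\ref{r1.6}, it suffices to prove the reverse inclusion $\mathbf{Cs}(X,d)\subseteq\mathbf{PI}(X,d)$. So I would fix an arbitrary $\Phi\in\mathbf{Cs}(X,d)$ and aim to show that $d(x,\Phi(x))=0$ for every $x\in X$, which is exactly the assertion $\Phi\in\mathbf{PI}(X,d)$ by Definition~\ref{d1.4}.

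The first key step is to apply Proposition~\ref{p1.8} in the special case $(Y,\rho)=(X,d)$ with $\Psi=\Phi$ (admissible because $\mathbf{Cs}(X,d)$ is a group and $\Phi$ is itself a combinatorial self-similarity of $(X,d)$). Equality~\eqref{p1.8:e2} then tells us that $\{\Phi(X_j)\colon j\in J\}$ coincides, as a partition of $X$, with $\{X_j\colon j\in J\}=X/\coherent{0(d)}$. Since the blocks are nonempty and pairwise disjoint and $\Phi$ is a bijection, this yields a bijection $\sigma\colon J\to J$ with $\Phi(X_j)=X_{\sigma(j)}$ for each $j\in J$. The second key step is the cardinality argument: as $\Phi$ restricts to a bijection of $X_j$ onto $X_{\sigma(j)}$, we get $|X_j|=|X_{\sigma(j)}|$ for all $j$, and the hypothesis that distinct blocks have distinct numbers of points forces $\sigma(j)=j$, i.e.\ $\Phi(X_j)=X_j$ for every $j\in J$.

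To finish, I would take any $x\in X$, choose the block $X_j$ containing $x$, and observe that $\Phi(x)\in\Phi(X_j)=X_j$; hence $x$ and $\Phi(x)$ lie in the same block of the partition associated with $\coherent{0(d)}$, so $d(x,\Phi(x))=0$ by \eqref{ch2:p1:e1} together with Proposition~\ref{p1.6}. As $x$ was arbitrary, $\Phi$ is a pseudoidentity and the corollary follows. I do not expect any serious obstacle here: once Proposition~\ref{p1.8} is invoked, the argument is essentially bookkeeping, and the only point needing a little care is verifying that the induced map $\sigma$ on the index set is well defined and bijective (which uses disjointness and nonemptiness of the blocks together with bijectivity of $\Phi$).
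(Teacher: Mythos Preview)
Your proposal is correct and follows essentially the same approach as the paper: invoke Proposition~\ref{p1.8} to see that $\Phi$ permutes the blocks of $X/{\coherent{0}}$, then use bijectivity of $\Phi$ together with the distinct-cardinality hypothesis to force $\Phi(X_j)=X_j$ for every $j$, and conclude that $\Phi$ is a pseudoidentity. The paper's proof is simply a terser version of yours, leaving the construction of $\sigma$ and the final verification that $\Phi(X_j)=X_j$ implies $d(x,\Phi(x))=0$ implicit.
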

\begin{proof}
Let
\begin{equation}\label{coreq1}
|X_{j_1}|\ne |X_{j_2}|
\end{equation}
holds for all different $j_1, j_2\in J.$ Let us consider an arbitrary combinatorial self-similarity $\Psi: X\to X$ of $(X, d).$ We must show that $\Psi$ is a pseudoidentity of $(X, d).$ To do it, we rewrite equality \eqref{p1.8:e2} in the form
\begin{equation}\label{coreq2}
\{X_j: j\in J\}=\{\Psi(X_j): j\in J\}.
\end{equation}
Since $\Psi$ is a bijective mapping, $|\Psi(X_j)|=|X_j|$ holds for every $j\in J.$ Now, $\Psi\in\mathbf{PI}(X, d)$ follows from \eqref{coreq1} and \eqref{coreq2}.
\end{proof}

The next result directly follows from Proposition~3.6 and Corollary~3.7 of \cite{DLAMH2020}, and shows that a partial converse to Proposition~\ref{ch2:p1} is also true.

\begin{proposition}\label{ch2:p2}
Let \(X\) be a nonempty set and let \(\equiv\) be an equivalence relation on \(X\). Then there is a unique up to combinatorial similarity discrete pseudometric \(d \colon X^{2} \to \RR\) such that
\begin{equation}\label{ch2:p2:e1}
(x \equiv y) \Leftrightarrow (d(x, y) = 0)
\end{equation}
is valid for all \(x\), \(y \in X\).
\end{proposition}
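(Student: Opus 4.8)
The plan is to deduce both existence and uniqueness by transporting the given equivalence relation through Proposition~\ref{p1.6} and then applying the cited results from \cite{DLAMH2020}. First I would invoke Proposition~\ref{p1.6} to obtain the unique partition $P=\{X_j\colon j\in J\}$ of $X$ whose associated equivalence relation is $\equiv$. To construct a pseudometric $d$ realizing \eqref{ch2:p2:e1}, one can set $d(x,y)=0$ whenever $x$ and $y$ lie in the same block $X_j$ and $d(x,y)=1$ otherwise; the verification that this $d$ is a pseudometric is routine (symmetry and the vanishing on the diagonal are immediate, and the triangle inequality $d(x,y)\le d(x,z)+d(z,y)$ fails only if the left side is $1$ and both terms on the right are $0$, which by transitivity of $\equiv$ is impossible). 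Since $|d(X^2)|\le 2$, the pseudometric $d$ is discrete by Remark~\ref{r3.xx}, and by construction $(x\equiv y)\Leftrightarrow(d(x,y)=0)$. This settles existence.

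For uniqueness up to combinatorial similarity, suppose $d_1$ and $d_2$ are two discrete pseudometrics on $X$ both satisfying \eqref{ch2:p2:e1}. Then the relations $\coherent{0(d_1)}$ and $\coherent{0(d_2)}$ both coincide with $\equiv$, hence with each other, so $X/\coherent{0(d_1)}=X/\coherent{0(d_2)}$ as partitions of $X$. By Proposition~\ref{ch2:p1} the metric reflections $(X/\coherent{0},\delta_{d_1})$ and $(X/\coherent{0},\delta_{d_2})$ are metric spaces on the \emph{same} underlying set, and each is discrete (its range is contained in $d_i(X^2)$, which has at most two elements, one of them $0$); a discrete metric on a set takes only the values $0$ (on the diagonal) and a single positive constant, so $\delta_{d_2}=c\,\delta_{d_1}$ for a suitable scaling constant $c>0$ — or, in the degenerate case $|X/\coherent{0}|=1$, both reflections are trivial. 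The rescaling map $t\mapsto ct$ together with $\operatorname{Id}_X$ then exhibits $d_1$ and $d_2$ as combinatorially similar via Definition~\ref{d1.2}, with the bijection $f\colon d_1(X^2)\to d_2(X^2)$ sending $0\mapsto 0$ and the positive value of $d_1$ to the positive value of $d_2$. The cleanest route, however, is simply to cite Proposition~3.6 and Corollary~3.7 of \cite{DLAMH2020} directly, which already package precisely this existence-and-uniqueness statement for discrete pseudometrics.

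The only mild obstacle is bookkeeping in the edge cases: when $\equiv$ is the universal relation $X\times X$ the realizing pseudometric is the zero pseudometric of Example~\ref{ex1.4} and the metric reflection is a one-point space, while when $\equiv$ is the identity relation one gets (up to combinatorial similarity) the standard discrete metric; one must make sure the phrase "unique up to combinatorial similarity" is interpreted so that these degenerate pseudometrics are not spuriously excluded. Since both cases are subsumed by the construction above and by the cited results of \cite{DLAMH2020}, no genuine difficulty arises, and I would keep the proof to a one-line appeal to those results followed by the partition–equivalence correspondence of Proposition~\ref{p1.6}.
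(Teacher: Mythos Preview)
Your proposal is correct. The paper itself gives no proof at all: it simply states that the proposition ``directly follows from Proposition~3.6 and Corollary~3.7 of \cite{DLAMH2020}'', which is exactly the one-line citation you suggest at the end of your write-up.

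Your explicit argument (the $0$/$1$ construction for existence, and for uniqueness the observation that two discrete pseudometrics sharing the relation $\coherent{0}$ must have ranges $\{0\}$ or $\{0,c_i\}$ and are therefore matched by $\operatorname{Id}_X$ together with the obvious bijection of ranges) is more detailed than what the paper provides and is entirely sound. The detour through the metric reflections is unnecessary---you can work directly with $d_1,d_2$ on $X$, since $d_i(x,y)=0\Leftrightarrow x\equiv y$ already forces $d_i(X^2)=\{0\}$ or $\{0,c_i\}$---but it does no harm. The edge cases you flag (universal relation giving the zero pseudometric, identity relation giving a genuine discrete metric) are handled correctly by your construction.
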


\begin{corollary}\label{cor2.8}
Let \(d\) and $\rho$ be discrete pseudometrics on a set $X.$ Then the following statements are equivalent:
\begin{enumerate}
\item The pseudometric spaces $(X,d) $ and $(X, \rho)$ are combinatorially similar.

\item The binary relation $\coherent{0(d)}$ and $\coherent{0 (\rho)}$ are the same.
\end{enumerate}
\end{corollary}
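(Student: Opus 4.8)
The plan is to deduce Corollary~\ref{cor2.8} as a more or less immediate consequence of Proposition~\ref{ch2:p2} together with Corollary~\ref{cor:p1.8}. Both implications are short, so I would simply prove the two directions separately.

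First I would prove $(i)\Rightarrow(ii)$. Assume $(X,d)$ and $(X,\rho)$ are combinatorially similar. Since $d$ and $\rho$ are two combinatorially similar pseudometrics defined on the same nonempty set $X$, Corollary~\ref{cor:p1.8} applies verbatim and yields that the binary relations $\coherent{0(d)}$ and $\coherent{0(\rho)}$ are equal, which is exactly statement~$(ii)$. (If one wishes to be careful about the hypothesis ``nonempty'', note that for $X=\varnothing$ both statements hold trivially, so one may assume $X\neq\varnothing$ throughout.)

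Next I would prove $(ii)\Rightarrow(i)$. Assume $\coherent{0(d)}$ and $\coherent{0(\rho)}$ are the same relation; call it $\equiv$. By Proposition~\ref{ch2:p1}, $\equiv$ is an equivalence relation on $X$. By Proposition~\ref{ch2:p2}, there is a discrete pseudometric $d'$ on $X$, unique up to combinatorial similarity, satisfying $(x\equiv y)\Leftrightarrow(d'(x,y)=0)$ for all $x,y\in X$. Now $d$ is a discrete pseudometric on $X$ with $(x\equiv y)\Leftrightarrow(d(x,y)=0)$, so by the uniqueness clause of Proposition~\ref{ch2:p2} the spaces $(X,d)$ and $(X,d')$ are combinatorially similar; likewise $(X,\rho)$ and $(X,d')$ are combinatorially similar. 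Since combinatorial similarity is an equivalence relation on the class of pseudometric spaces (composition of combinatorial similarities is a combinatorial similarity and the inverse of one is one), transitivity gives that $(X,d)$ and $(X,\rho)$ are combinatorially similar, i.e.\ statement~$(i)$.

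I do not expect a genuine obstacle here: the content has already been packaged into Proposition~\ref{ch2:p2} and Corollary~\ref{cor:p1.8}, and the only thing to be mildly careful about is invoking the \emph{uniqueness} part of Proposition~\ref{ch2:p2} correctly (it is uniqueness ``up to combinatorial similarity'', not strict uniqueness) and noting that combinatorial similarity is transitive. If one prefers to avoid even citing transitivity of combinatorial similarity explicitly, one can instead argue directly: from $(ii)$ the partitions $X/\coherent{0(d)}$ and $X/\coherent{0(\rho)}$ coincide, so $\operatorname{Id}_X$ induces a bijection of blocks, and one builds the required function $f\colon d(X^2)\to\rho(X^2)$ block-by-block using $|d(X^2)|\leqslant 2$, $|\rho(X^2)|\leqslant 2$ and Remark~\ref{r3.xx}; but routing everything through Proposition~\ref{ch2:p2} is cleaner and I would do that.
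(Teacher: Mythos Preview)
Your proposal is correct and matches the paper's intent: the paper states Corollary~\ref{cor2.8} without proof immediately after Proposition~\ref{ch2:p2}, so it is meant to follow from the uniqueness clause there (for $(ii)\Rightarrow(i)$) together with Corollary~\ref{cor:p1.8} (for $(i)\Rightarrow(ii)$), exactly as you argue. Your handling of the empty case and the appeal to transitivity of combinatorial similarity are appropriate fill-ins for what the paper leaves implicit.
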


In the next section of the paper we will prove that the equality of binary relations $\coherent{0(d)}$ and $\coherent{0 (\rho)}$ is equivalent to combinatorial similarity of $(X, d)$ and $(X, \rho),$ when both spaces are pseudorectangles or strongly rigid pseudometric spaces.

\section{Structure of $\mathcal{IP}$ spaces}

The following theorem is a special case of Theorem 2.8 \cite{BD2205}, which gives us a complete description of semimetric spaces satisfying the equality
 $$
 \mathbf{Cs} (X,d)= \mathbf{Sym}(X).
 $$

\begin{theorem}\label{t2.4}
Let \((X, d)\) be a nonempty metric space. Then the following statements are equivalent:
\begin{enumerate}
\item \label{t2.4:s1} At least one of the following conditions has been fulfilled:
\begin{enumerate}
\item \label{t2.4:s1.1} \((X, d)\) is strongly rigid;
\item \label{t2.4:s1.2} \((X, d)\) is discrete;
\item \label{t2.4:s1.3} All three-point subspaces of $(X, d)$ are strongly rigid and isometric.
\end{enumerate}
\item \label{t2.4:s2} The equality   $\mathbf{Cs} (X,d)= \mathbf{Sym}(X)$  holds.
\end{enumerate}
\end{theorem}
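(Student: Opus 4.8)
The plan is to prove the two implications of Theorem~\ref{t2.4} separately, with the reverse direction (\ref{t2.4:s2}~$\Rightarrow$~\ref{t2.4:s1}) being the substantive one. For the forward direction (\ref{t2.4:s1}~$\Rightarrow$~\ref{t2.4:s2}), I would show that in each of the three cases a four-point equality $d(x,y)=d(u,v)$ forces enough structure on the quadruple $\{x,y,u,v\}$ that an arbitrary permutation $\Phi\in\mathbf{Sym}(X)$ automatically preserves it. If $(X,d)$ is strongly rigid and $d(x,y)=d(u,v)\neq 0$, then by the reformulation of Definition~\ref{d4.3} (using that $d$ is a metric, so $\coherent{0}$ is trivial) we have $\{x,y\}=\{u,v\}$ as unordered pairs, hence $\{\Phi(x),\Phi(y)\}=\{\Phi(u),\Phi(v)\}$ and the equality $d(\Phi(x),\Phi(y))=d(\Phi(u),\Phi(v))$ is immediate; the case $d(x,y)=d(u,v)=0$ forces $x=y$ and $u=v$ by positivity, which is again preserved. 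If $(X,d)$ is discrete, then $|d(X^2)|\le 2$, so $d(X^2)=\{0,r\}$ for some $r>0$, and any bijection $f\colon d(X^2)\to d(X^2)$ fixing $0$ (there is only the identity) witnesses that every permutation is a combinatorial self-similarity. Case~\ref{t2.4:s1.3} is the delicate sub-case: here I would argue that if all three-point subspaces are strongly rigid and isometric, then in fact $d$ takes at most... — actually the correct observation is that a three-point strongly rigid metric space has three pairwise distinct nonzero distances, and the ``isometric'' hypothesis forces these three values to be the same across all triples, so $d(X^2)\setminus\{0\}$ has at most... three elements arranged so that every triple realizes the same multiset; one then checks directly that any permutation permutes the distance multiset of each triple and hence preserves all four-point equalities (a four-point equality reduces to comparing edges lying in triples).

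For the reverse direction (\ref{t2.4:s2}~$\Rightarrow$~\ref{t2.4:s1}), the natural route is the contrapositive: assume none of \ref{t2.4:s1.1}, \ref{t2.4:s1.2}, \ref{t2.4:s1.3} holds and construct a permutation of $X$ that is not a combinatorial self-similarity, i.e.\ a permutation $\Phi$ and points with $d(x,y)=d(u,v)$ but $d(\Phi(x),\Phi(y))\neq d(\Phi(u),\Phi(v))$. Since this is stated to be a special case of Theorem~2.8 of~\cite{BD2205}, the cleanest presentation is to invoke that theorem directly: I would verify that the hypotheses of the semimetric result of \cite{BD2205} are met (a metric is in particular a semimetric) and that its list of structural conditions, when specialized to metric spaces, collapses exactly to \ref{t2.4:s1.1}--\ref{t2.4:s1.3}. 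The work here is bookkeeping: Theorem~2.8 of~\cite{BD2205} presumably has a more elaborate list because semimetrics need not satisfy the triangle inequality, and some of its cases become vacuous or merge under the metric hypothesis.

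If instead one wanted a self-contained proof of \ref{t2.4:s2}~$\Rightarrow$~\ref{t2.4:s1}, the argument would go by a case analysis on the size of $D:=d(X^2)\setminus\{0\}$. If $|D|\le 1$ then $(X,d)$ is discrete and we are in case~\ref{t2.4:s1.2}. If $|D|\ge 2$, pick distinct $a<b$ in $D$; choosing witnessing pairs and transposing an appropriate pair of points, one tries to break the equality $d(x,y)=d(u,v)$ unless the configuration is extremely constrained. The heart of the matter is to show that resisting all such sabotage forces either strong rigidity (every nonzero distance is attained by a unique unordered pair) or the homogeneous three-point condition~\ref{t2.4:s1.3}. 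The main obstacle, in either approach, is precisely disentangling case~\ref{t2.4:s1.3} from the other two and showing the trichotomy is exhaustive — that a metric space in which every permutation preserves all four-point equalities but which is neither discrete nor strongly rigid must have all its three-point subspaces strongly rigid and mutually isometric. Given that the paper explicitly flags this as a corollary of~\cite[Theorem~2.8]{BD2205}, I would take the citation route and keep the argument short, merely spelling out the specialization from semimetrics to metrics.
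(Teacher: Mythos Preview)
Your proposal is correct and matches the paper's approach exactly: the paper gives no proof of this theorem at all, simply stating that it ``is a special case of Theorem~2.8 \cite{BD2205}'' and moving on. Your recommendation to take the citation route and merely spell out the specialization from semimetrics to metrics is precisely what the paper does (in fact the paper does even less, omitting the specialization discussion entirely), and your supplementary sketches of a self-contained argument go beyond what the paper provides.
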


Our initial objective is to find a ``pseudometric'' analog of this theorem.

\medskip

The next lemma shows that the transition from pseudometric space to its metric reflection preserves the concept of discreteness, strong rigidity, and the property of being a  pseudorectangle.

 \begin{lemma}\label{l4.xx}
Let \((X,d)\) be a nonempty pseudometric space. Then the following statements hold:
\begin{enumerate}
\item  \((X, d)\) is strongly rigid if and only if \( (X/\coherent{0}, \delta_d) \) is strongly rigid.
\item  \((X, d)\) is discrete if and only if \( (X/\coherent{0}, \delta_d) \) is discrete.
\item  \((X, d)\) is a pseudorectangle if and only if \( (X/\coherent{0}, \delta_d) \) contains exactly four points and all three-point subspaces of \( (X/\coherent{0}, \delta_d) \) are strongly rigid and isometric.
\end{enumerate}
\end{lemma}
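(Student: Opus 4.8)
The unifying idea is that the metric subspaces of $(X,d)$ are essentially the same objects as the metric subspaces of the metric reflection $(X/\coherent{0},\delta_d)$, so any property that is defined by quantifying over metric subspaces is automatically transferred. The plan is to make this precise first, and then read off all three statements. Recall from Definition~\ref{d4.3} and Definition~\ref{defdis} that ``strongly rigid'' and ``discrete'' are, for pseudometrics, defined exactly as ``every metric subspace is strongly rigid'' resp.\ ``every metric subspace is discrete''; and recall (Proposition~\ref{ch2:p1}) that $\delta_d$ genuinely is a metric, so its subspaces are metric spaces in the usual sense. The canonical projection $\pi\colon X\to X/\coherent{0}$, $\pi(x)=[x]_{\coherent{0}}$, is surjective and satisfies $\delta_d(\pi(x),\pi(y))=d(x,y)$ for all $x,y\in X$.

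The key step is the following observation. If $A\subseteq X$ is such that the restriction $d|_{A\times A}$ is a metric on $A$ (i.e.\ $A$ is a metric subspace of $(X,d)$), then $\pi|_A\colon A\to \pi(A)$ is a bijection and an isometry of $(A,d|_{A\times A})$ onto the subspace $(\pi(A),\delta_d|_{\pi(A)\times\pi(A)})$ of the metric reflection; conversely, every nonempty subset $B\subseteq X/\coherent{0}$ is a metric subspace of $(X/\coherent{0},\delta_d)$, and choosing one representative $x_\beta$ from each $\beta\in B$ produces a metric subspace $A=\{x_\beta:\beta\in B\}$ of $(X,d)$ with $\pi(A)=B$ and $\pi|_A$ an isometry. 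Thus the metric subspaces of $(X,d)$ and of $(X/\coherent{0},\delta_d)$ coincide up to isometry, and isometry preserves strong rigidity, discreteness, cardinality, and (for three-point spaces) the property of being strongly rigid and isometric to a fixed model. Both the injectivity of $\pi|_A$ and the metric/isometry claims are immediate from $d(x,y)=0\Leftrightarrow x\coherent{0}y\Leftrightarrow\pi(x)=\pi(y)$; I would state this as a short auxiliary claim and prove it in two or three lines.

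Given the auxiliary claim, statements (i) and (ii) follow directly: $(X,d)$ is strongly rigid (discrete) iff every metric subspace of $(X,d)$ is strongly rigid (discrete) iff — via the isometry correspondence — every metric subspace of $(X/\coherent{0},\delta_d)$ is strongly rigid (discrete) iff the metric space $(X/\coherent{0},\delta_d)$ is strongly rigid (discrete) in the classical sense, which for a genuine metric space is the same as being strongly rigid (discrete) as a pseudometric space. For statement (iii), I would unwind Definition~\ref{defrect}: $(X,d)$ is a pseudorectangle iff (a) all three-point metric subspaces of $(X,d)$ are strongly rigid and isometric, and (b) there is a four-point metric subspace $Y$ such that every $x\in X$ has some $y\in Y$ with $d(x,y)=0$. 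By the correspondence, (a) is equivalent to the statement that all three-point subspaces of $(X/\coherent{0},\delta_d)$ are strongly rigid and isometric. Condition (b) says precisely that $\pi(Y)\subseteq X/\coherent{0}$ meets every $\coherent{0}$-class, i.e.\ $\pi(Y)=X/\coherent{0}$; since $Y$ is a four-point metric subspace, $\pi|_Y$ is injective, so $|X/\coherent{0}|=|\pi(Y)|=4$. Conversely, if $|X/\coherent{0}|=4$ and all three-point subspaces of the reflection are strongly rigid and isometric, pick one representative from each of the four classes to get the required four-point metric subspace $Y$ of $(X,d)$ (its three-point subspaces being strongly rigid and isometric because they embed isometrically into the reflection), and every $x\in X$ lies in the same class as one of these representatives. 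This yields the ``if and only if'' in (iii).

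The only place needing a little care — and the main (mild) obstacle — is verifying that a three-point metric subspace of $(X,d)$ really does correspond to a three-point subspace of $X/\coherent{0}$ and vice versa, i.e.\ that cardinality is preserved in both directions: from $X$ to the reflection this is the injectivity of $\pi$ on metric subspaces, and from the reflection back to $X$ it is the freedom to choose distinct representatives of distinct classes. Once this bijection-on-subspaces bookkeeping is in place, all three equivalences are purely formal, so I would spend the bulk of the written proof on the auxiliary claim and then dispatch (i), (ii), (iii) quickly.
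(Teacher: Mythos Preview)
Your proof is correct and close in spirit to the paper's: both hinge on the fact that $\pi$ restricted to a metric subspace of $(X,d)$ is an isometry onto a subspace of the reflection, and that choosing class representatives produces metric subspaces of $(X,d)$. The difference is organizational. You isolate this correspondence as a single auxiliary claim and then handle (i)--(iii) uniformly, whereas the paper treats each part separately. For (i) the paper does essentially what you do, using AC to pick representatives and noting the resulting isometry with the reflection. For (ii) the paper takes a shortcut you do not use: it simply observes from $d(x,y)=\delta_d(\pi(x),\pi(y))$ that $d$ and $\delta_d$ share the same range, whence $|d(X^2)|\le 2$ iff $|\delta_d((X/\coherent{0})^2)|\le 2$, and then invokes Remark~\ref{r3.xx}. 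For (iii) the paper gives essentially no detail beyond citing Proposition~\ref{ch2:p1} and Definition~\ref{d4.3}, so your unwinding of Definition~\ref{defrect} is more explicit than what appears there. Your unified framework is cleaner and makes the logical structure transparent; the paper's range argument for (ii) is marginally more direct but less in line with the other two parts.
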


\begin{proof}

Let $(X,d)$ be strongly rigid. Then, using the Axiom of Choice (AC), we find a metric subspace $Y$ of the pseudometric space $(X, d)$ such that for every $x\in X$ there is the unique $y\in Y$ which satisfies $d(x, y)=0.$ Since $(X, d)$ is strongly rigid, the metric space $Y$ is also strongly rigid by Definition~\ref{d4.3}. Moreover, Proposition~\ref{ch2:p1} implies that the metric spaces $Y$ and $(X/\coherent{0}, \delta_d)$ are isometric. Hence, $(X/\coherent{0}, \delta_d)$ is strongly rigid.

To complete the proof of statement $(i)$ we must show that the strong rigidity of $(X/\coherent{0}, \delta_d)$ implies that $(X, d)$ is also a strongly rigid.

Let $(X/\coherent{0}, \delta_d)$ be strongly rigid and let $Z$ be a metric subspace of $(X, d).$ Using AC, we find a metric subspace $Y_{Z}$ of $(X, d)$ such that $Y_{Z}\supseteq Z$ and for every $x\in X$ there is the unique $y\in Y_{Z},$ which satisfies $d(x, y)=0.$ Proceeding as above, we can see that $Y_{Z}$ and $(X/\coherent{0}, \delta_d)$ are isometric. Hence, $Y_{Z}$ is strongly rigid and, consequently, being a subspace of $Y_X$, $Z$ is also strongly rigid.

Statement $(i)$ follows.

Let us prove statement $(ii).$
Let $\pi \colon X \to X/\coherent{0} $ be the canonical projection,
$$\pi(x) \colon = \{ y \in X \colon d(x,y) = 0 \}.
$$
Then, by formula \eqref{ch2:p1:e2} of Proposition~\ref{ch2:p1}, the equality
\begin{equation}\label{e4.xx}
d(x,y)= \delta_d (\pi(x), \pi(y) )
\end{equation}
holds for all $x, y \in X.$ Using \eqref{e4.xx}, we see that $d$ and $\delta_d$ has one and  the same range, and, consequently $(ii)$ holds by Remark~\ref{r3.xx}.

The validity of statement $(iii)$ follows from Proposition~\ref{ch2:p1} and Defi\-nition~\ref{d4.3}.
\end{proof}

The following lemma is, in fact, a particular case of Proposition~2.3 from \cite{BD2205}.

\begin{lemma}\label{lem*}
Let $(X, d)$ be a metric space with $|X|\geqslant 4.$ Then the following statements are equivalent:
\begin{enumerate}
\item All three-point subspaces of $(X, d)$ are strongly rigid and isometric.
\item $(X, d)$ is combinatorially similar to the space of vertices of Euclidean non-square rectangle.
\end{enumerate}
\end{lemma}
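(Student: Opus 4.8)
The plan is to prove Lemma~\ref{lem*} by establishing both implications, with the bulk of the work going into $(ii)\Rightarrow(i)$ being essentially trivial (the vertex set of a Euclidean non-square rectangle is easily checked to have all three-point subspaces strongly rigid and mutually isometric, since any three of the four vertices form a right triangle with legs equal to the two side-lengths $a\ne b$ and hypotenuse $\sqrt{a^2+b^2}$, and all three side-lengths of such a triangle are pairwise distinct, which gives both strong rigidity and the isometry between any two such triples). The real content is $(i)\Rightarrow(ii)$: assuming $|X|\geqslant 4$ and that every three-point subspace is strongly rigid and isometric, I want to show $|X|=4$ and that the resulting four-point metric space is combinatorially similar to the rectangle.

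First I would fix a three-point subspace $\{p,q,r\}$; its three mutual distances are pairwise distinct by strong rigidity, say $0<a<b<c$. Isometry of all three-point subspaces forces every three-point subspace of $(X,d)$ to realize exactly this triple $\{a,b,c\}$ of distances. In particular $d(X^2)\setminus\{0\}\subseteq\{a,b,c\}$, and in fact $d(X^2)=\{0,a,b,c\}$ once $|X|\geqslant 3$. Next I would take a fourth point $s\in X$ and analyze the four-point space $\{p,q,r,s\}$: each of its four triangles must use the distance set $\{a,b,c\}$ exactly. A counting/parity argument on how the value $c$ (or the value $a$) can be distributed among the six pairs of a four-point set — each of the $\binom{4}{3}=4$ triples contains exactly one pair at distance $c$ — shows that the "$c$-edges" form a perfect matching on the four points, and similarly the "$a$-edges" form a perfect matching, leaving the remaining two edges (also a perfect matching) at distance $b$. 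Up to relabeling this is exactly the rectangle: two opposite sides of length $a$, two of length $b$, two diagonals of length $c$, and the Pythagorean relation $c^2=a^2+b^2$ is then forced by the triangle inequalities together with the fact that $a,b,c$ are the sides of the (necessarily right, by the distance pattern) triangle — actually the rectangle need not be Euclidean-realizable a priori, so I would instead invoke Lemma~\ref{lem*}'s target description "space of vertices of Euclidean non-square rectangle" and check that the abstract four-point space with this distance pattern is combinatorially similar (not isometric) to a genuine Euclidean non-square rectangle; combinatorial similarity only needs the \emph{pattern} of equal distances to match, via the bijection $f$ on distance sets, so I merely need $a,b,c$ pairwise distinct and the matching structure, which I have.

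The remaining step is to rule out $|X|\geqslant 5$. Suppose $t$ is a fifth point. Then $\{p,q,r,s,t\}$ has all of its $\binom{5}{3}=10$ triples isometric to $\{a,b,c\}$, so restricting to any four of the five points gives the rectangle pattern above. But the "$c$-edges" must form a perfect matching on each four-subset, and a graph on five vertices in which every four-vertex induced subgraph is a perfect matching (two disjoint edges) is impossible — a perfect matching needs an even number of vertices, so already no assignment of $c$-edges on five points can have every four-subset induce a matching. (Concretely: on five points, the $c$-edge graph would need each vertex to have degree making every 4-subset $2$-regular-into-a-matching, i.e.\ each vertex must be $c$-adjacent to exactly one of any other three vertices, forcing degree considerations that don't close up on an odd vertex set.) This contradiction forces $|X|=4$.

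I expect the main obstacle to be the combinatorial bookkeeping in the four-point case: carefully proving that the $a$-, $b$-, and $c$-edges each form a perfect matching requires checking that the distance pattern of a four-point metric space with $d(X^2)=\{0,a,b,c\}$ and all triangles carrying exactly $\{a,b,c\}$ cannot be anything other than the rectangle pattern — one must exclude, for instance, a "path-like" assignment where three edges incident to one vertex carry $a,b,c$ and verify it violates some triangle's distance set or the triangle inequality (e.g.\ two edges of length $a$ sharing a vertex would put $a,a$ in a triangle, contradicting pairwise-distinctness of $\{a,b,c\}$, so no vertex is incident to two edges of the same length — this immediately gives that each color class is a matching, which on four vertices is a \emph{perfect} matching). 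Once that observation is made the argument is short, so the obstacle is really just organizing these small-case verifications cleanly; I would lean on Lemma~\ref{l2.2} and Proposition~\ref{p1.6} only insofar as they are already available, and otherwise keep the proof self-contained.
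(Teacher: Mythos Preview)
The paper does not actually prove this lemma; it simply records it as ``a particular case of Proposition~2.3 from~\cite{BD2205}'' and moves on. Your proposal therefore supplies what the paper omits: a self-contained combinatorial argument, and it is correct. For $(ii)\Rightarrow(i)$ your check is valid (a combinatorial similarity is in particular a bijection on points, so $|X|=4$, and the three pairwise distinct nonzero distances of a non-square rectangle pull back under $f^{-1}$ to three distinct values realized by every triple). For $(i)\Rightarrow(ii)$ your key observation---that no vertex can be incident to two edges carrying the same distance, since that would place a repeated nonzero distance inside a three-point subspace and violate strong rigidity---already does all the work: it forces every vertex to have total degree at most~$3$ (at most one edge per ``colour'' $a,b,c$), hence $|X|-1\leqslant 3$, i.e.\ $|X|\leqslant 4$; and on four points it forces each colour class to be a perfect matching, which is the rectangle pattern up to the unique $1$-factorization of $K_4$ and therefore combinatorially similar to the Euclidean model. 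Your separate perfect-matching parity argument on five points is thus correct but redundant---the degree pigeonhole you isolate in the final paragraph is already decisive for both the four-point structure and the bound $|X|\leqslant 4$. Compared with the paper's approach of citing the external proposition, your route makes the argument self-contained at essentially no extra cost.
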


Now we are ready to characterize the pseudometric spaces satisfying equality \eqref{d1.***-2}.

\begin{theorem}\label{Analogt2.4}
Let $(X, d)$ be a nonempty pseudometric space. Then the following statements are equivalent:
\begin{enumerate}
\item \label{t2.4:s1_2} At least one of the following conditions has been fulfilled:
\begin{enumerate}
\item \((X, d)\) is strongly rigid;
\item \((X, d)\) is discrete;
\item  $(X, d)$ is a pseudorectangle.
\end{enumerate}
\item The equality   \begin{equation}\label{eqCs}\mathbf{Cs}(X/ \coherent{0}, \delta_d)= \mathbf{Sym} (X/\coherent{0})\end{equation}  holds.
\end{enumerate}
\end{theorem}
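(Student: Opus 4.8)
The statement reduces, via Lemma~\ref{l4.xx}, to a fact about the \emph{metric} space $(X/\coherent{0}, \delta_d)$: the three pseudometric conditions in~\eqref{t2.4:s1_2} correspond exactly to the three metric conditions in Theorem~\ref{t2.4}\eqref{t2.4:s1}. Indeed, $(X,d)$ is strongly rigid iff $(X/\coherent{0},\delta_d)$ is strongly rigid (Lemma~\ref{l4.xx}(i)); $(X,d)$ is discrete iff $(X/\coherent{0},\delta_d)$ is discrete (Lemma~\ref{l4.xx}(ii)); and $(X,d)$ is a pseudorectangle iff $(X/\coherent{0},\delta_d)$ has exactly four points with all three-point subspaces strongly rigid and isometric (Lemma~\ref{l4.xx}(iii)). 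So the plan is to transfer the problem to the metric reflection and invoke Theorem~\ref{t2.4}, being careful only about the mismatch between pseudorectangles and condition \eqref{t2.4:s1.3}, which has no cardinality restriction.

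First I would establish the direction \eqref{t2.4:s1_2}$\Rightarrow$\eqref{eqCs}. Suppose one of (a), (b), (c) holds. If $(X,d)$ is strongly rigid or discrete, then by Lemma~\ref{l4.xx}(i) or (ii) the metric reflection $(X/\coherent{0},\delta_d)$ is strongly rigid or discrete, hence satisfies \eqref{t2.4:s1} of Theorem~\ref{t2.4}, so $\mathbf{Cs}(X/\coherent{0},\delta_d) = \mathbf{Sym}(X/\coherent{0})$. If $(X,d)$ is a pseudorectangle, then by Lemma~\ref{l4.xx}(iii) the space $(X/\coherent{0},\delta_d)$ has exactly four points and all its three-point subspaces are strongly rigid and isometric, i.e.\ condition \eqref{t2.4:s1.3} holds; again Theorem~\ref{t2.4} gives \eqref{eqCs}.

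For the converse \eqref{eqCs}$\Rightarrow$\eqref{t2.4:s1_2}, I would apply Theorem~\ref{t2.4} to the metric space $(X/\coherent{0},\delta_d)$: equality \eqref{eqCs} forces one of \eqref{t2.4:s1.1}, \eqref{t2.4:s1.2}, \eqref{t2.4:s1.3} for $(X/\coherent{0},\delta_d)$. In the first two cases, Lemma~\ref{l4.xx}(i),(ii) pulls the property back to $(X,d)$, giving (a) or (b). The third case is where the one point of friction lies: condition \eqref{t2.4:s1.3} only says all three-point subspaces of $(X/\coherent{0},\delta_d)$ are strongly rigid and isometric, with no control on $|X/\coherent{0}|$. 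If $|X/\coherent{0}| \le 2$, the space is automatically discrete, so (b) holds; if $|X/\coherent{0}| = 3$, the whole space is a strongly rigid three-point metric space, and one checks directly (the only three-point strongly rigid spaces that are ``isometric to themselves'' impose no obstruction) that $(X,d)$ is then strongly rigid, giving (a) — alternatively a genuine three-point strongly rigid space has $|{\delta_d}((X/\coherent{0})^2)| = 3 > 2$ only when the three mutual distances are distinct, and such a space is strongly rigid, hence $(X,d)$ is strongly rigid by Lemma~\ref{l4.xx}(i); if instead $|{\delta_d}((X/\coherent{0})^2)|\le 2$ it is discrete. Finally, if $|X/\coherent{0}| \ge 4$, then by Lemma~\ref{lem*} condition \eqref{t2.4:s1.3} is equivalent to $(X/\coherent{0},\delta_d)$ being combinatorially similar to the vertex set of a Euclidean non-square rectangle, which forces $|X/\coherent{0}| = 4$ exactly and all three-point subspaces strongly rigid and isometric, so Lemma~\ref{l4.xx}(iii) yields that $(X,d)$ is a pseudorectangle, i.e.\ (c).

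The main obstacle is precisely this bookkeeping in the last case: reconciling the unrestricted condition \eqref{t2.4:s1.3} of Theorem~\ref{t2.4} with the definition of pseudorectangle, which demands exactly four classes. The resolution is to split on $|X/\coherent{0}|$ and observe that for $|X/\coherent{0}| \le 3$ condition \eqref{t2.4:s1.3} collapses into discreteness or strong rigidity of the reflection (and hence of $(X,d)$), while for $|X/\coherent{0}| \ge 4$ Lemma~\ref{lem*} pins the cardinality down to four, matching Lemma~\ref{l4.xx}(iii). Everything else is a direct application of the quoted results.
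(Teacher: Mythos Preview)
Your proposal is correct and follows essentially the same route as the paper: transfer the problem to the metric reflection via Lemma~\ref{l4.xx} and invoke Theorem~\ref{t2.4}, with Lemma~\ref{lem*} handling the pseudorectangle case. The only difference is cosmetic: in the converse direction the paper simply asserts (via Lemma~\ref{lem*}) that condition~\ref{t2.4:s1.3} on the reflection is equivalent to having exactly four points with that property, implicitly absorbing the small-cardinality cases into~\ref{t2.4:s1.1} or~\ref{t2.4:s1.2}, whereas you make the split on $|X/\coherent{0}|$ explicit --- arguably the cleaner bookkeeping.
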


\begin{proof}
$(i)\Rightarrow (ii).$ Let statement $(i)$ hold. Then, by Lemma~\ref{l4.xx}, at least one from the following statements is valid:

$(s_1)$ \( (X/\coherent{0}, \delta_d) \) is strongly rigid;

\smallskip

$(s_2)$ \( (X/\coherent{0}, \delta_d) \) is discrete;

\smallskip

$(s_3)$ All three-point subspaces of \( (X/\coherent{0}, \delta_d) \) are strongly rigid and isometric and, in addition, $|X/\coherent{0}|=4$ holds.

Now \eqref{eqCs} follows from Theorem~\ref{t2.4}.

$(ii)\Rightarrow (i).$ Let equality \eqref{eqCs} holds. Then, by Theorem~\ref{t2.4}, we have $(s_1)$ or $(s_2),$ or

\smallskip

$(s_4)$ All three-point subspaces of \( (X/\coherent{0}, \delta_d) \) are strongly rigid and discrete.

By Lemma~\ref{l4.xx}, statements $(s_1)$ and $(s_2)$ imply, respectively, statements $(i_1)$ and $(i_2)$ of the theorem being proved. Using Lemma~\ref{lem*}, we see that statements $(s_4)$ and $(s_3)$ are equivalent. Consequently, $(s_4)$ implies $(i_3)$ by Lemma~\ref{l4.xx}.
\end{proof}

The next proposition describes the combinatorial self-similarities of a pseudometric space $(X, d)$ via combinatorial self-similarities of the metric reflection \( (X/\coherent{0}, \delta_d) \) of this space.

\begin{proposition}\label{p3.5}
Let $(X, d)$ be a pseudometric space, let $\Phi: X\to X$ be a bijective mapping and let $\pi: X \to X/\coherent{0}$ be the canonical projection,
\begin{equation*}
\pi(x)=\{y\in X: d(x, y)=0\}.
\end{equation*}
If there is a combinatorial self-similarity $\Psi:X/\coherent{0}\to X/\coherent{0}$ of \( (X/\coherent{0}, \delta_d) \) such that diagram
\begin{equation}\label{eqdiag1*}
\begin{array}{ccc}
X & \xrightarrow{\ \ \pi \ \ \ \ } &
X/\coherent{0} \\
\!\! \!\! \!\! \!\! \! \Phi\Bigg\downarrow &  & \! \!\Bigg\downarrow \Psi
\\
X & \xrightarrow{\ \ \pi \ \ \ \ } & X/\coherent{0}
\end{array}
\end{equation}
is commutative, then $\Phi$ is a combinatorial self-similarity of $(X, d).$
\end{proposition}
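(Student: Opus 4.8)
The plan is to unwind the commutativity of diagram \eqref{eqdiag1*} into an explicit pointwise identity, and then to transfer the four-point equality $d(x,y)=d(u,v)$ back and forth through $\pi$ using formula \eqref{e4.xx}. First I would record what commutativity of \eqref{eqdiag1*} says: for every $x\in X$,
\[
\pi(\Phi(x)) = \Psi(\pi(x)).
\]
Combined with \eqref{e4.xx}, which gives $d(a,b)=\delta_d(\pi(a),\pi(b))$ for all $a,b\in X$, this yields for all $x,y\in X$
\[
d(\Phi(x),\Phi(y)) = \delta_d\bigl(\pi(\Phi(x)),\pi(\Phi(y))\bigr) = \delta_d\bigl(\Psi(\pi(x)),\Psi(\pi(y))\bigr).
\]

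Next I would exploit that $\Psi$ is a combinatorial self-similarity of $(X/\coherent{0},\delta_d)$: there is a bijection $f\colon \delta_d\bigl((X/\coherent{0})^2\bigr)\to \delta_d\bigl((X/\coherent{0})^2\bigr)$ with $\delta_d(\Psi(\alpha),\Psi(\beta)) = f\bigl(\delta_d(\alpha,\beta)\bigr)$ for all $\alpha,\beta\in X/\coherent{0}$. Since $\delta_d\bigl((X/\coherent{0})^2\bigr) = d(X^2)$ by \eqref{e4.xx}, the same $f$ is a bijection of $d(X^2)$ onto itself. Substituting $\alpha=\pi(x)$, $\beta=\pi(y)$ into the displayed chain above and using $\delta_d(\pi(x),\pi(y))=d(x,y)$ once more, I get
\[
d(\Phi(x),\Phi(y)) = f\bigl(d(x,y)\bigr)
\]
for all $x,y\in X$. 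Taking $\Psi$ (the combinatorial similarity in Definition~\ref{d1.2}) to be $\Phi^{-1}\colon X\to X$ — which is a bijection because $\Phi$ is — this is exactly the defining equation \eqref{d1.2:e1} of a combinatorial similarity of $(X,d)$ with itself, so $\Phi\in\mathbf{Cs}(X,d)$.

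The only point needing a little care — and the closest thing to an obstacle — is the bookkeeping identification $\delta_d\bigl((X/\coherent{0})^2\bigr) = d(X^2)$, so that the function $f$ furnished by $\Psi$ really does have the right domain and codomain to serve as the accompanying bijection for $\Phi$; but this is immediate from \eqref{ch2:p1:e2}, since every value $d(x,y)$ equals $\delta_d(\pi(x),\pi(y))$ and conversely every $\delta_d(\alpha,\beta)$ equals $d(x,y)$ for any representatives $x\in\alpha$, $y\in\beta$. No appeal to $\delta_d$ being a metric (as opposed to a pseudometric) is needed, and the triangle inequality plays no role here — only the bijectivity of $\Phi$ and $\Psi$ and the functional relation defining combinatorial self-similarity.
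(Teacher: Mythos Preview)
Your proof is correct and is essentially the same argument as the paper's: both use the factorization $d=\delta_d\circ(\pi\otimes\pi)$ together with the identification $\delta_d\bigl((X/\coherent{0})^2\bigr)=d(X^2)$ to transport the bijection $f$ accompanying $\Psi$ to one accompanying $\Phi$; you carry this out by explicit pointwise equalities, whereas the paper packages the same chain into a sequence of commutative diagrams. One tiny remark: from $d(\Phi(x),\Phi(y))=f(d(x,y))$ it is cleaner to invoke $f^{-1}$ directly to match \eqref{d1.2:e1} with $\Psi=\Phi$, rather than passing through $\Phi^{-1}$ and then (implicitly) using that $\mathbf{Cs}(X,d)$ is a group.
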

\begin{proof}
Let $\Psi$ be a combinatorial self-similarity of \( (X/\coherent{0}, \delta_d) \). Then, using
Definition~\ref{d1.2}, we find a bijection $f: \delta_{d}(X/\coherent{0})^{2}\to\delta_{d}(X/\coherent{0})^{2} $ such that the following diagram
\begin{equation}\label{eqdiag2}
\begin{array}{ccc}
(X/\coherent{0})^{2} & \xrightarrow{\ \ \delta_{d} \ \ \ \ } &
\delta_{d}(X/\coherent{0})^{2} \\
\!\! \!\! \!\! \!\! \! \Psi\otimes\Psi\Bigg\downarrow &  & \! \!\Bigg\downarrow f
\\
(X/\coherent{0})^{2} & \xrightarrow{\ \ \delta_{d} \ \ \ \ } & \delta_{d}(X/\coherent{0})^{2}
\end{array}
\end{equation}
is commutative, where
\begin{equation*}\label{Psi}
\Psi\otimes\Psi(\<a, b>)=\<\Psi(a), \Psi(b)>
\end{equation*}
for every $\<a, b>\in(X/\coherent{0})^{2}.$
Suppose diagram \eqref{eqdiag1*} is commutative. Then
\begin{equation}\label{eqdiag3*}
\begin{array}{ccc}
X^{2} & \xrightarrow{\ \ \pi\otimes\pi \ \ \ \ } &
(X/\coherent{0})^{2} \\
\!\! \!\! \!\! \!\! \! \Phi\otimes\Phi\Bigg\downarrow &  & \! \!\Bigg\downarrow \Psi\otimes\Psi
\\
X^{2} & \xrightarrow{\ \ \pi\otimes\pi \ \ \ \ } & (X/\coherent{0})^{2}
\end{array}
\end{equation}
also is a commutative diagram, where $$\Phi\otimes\Phi(\<x, y>)=\<\Phi(x), \Phi(y)>$$ for every $\<x, y>\in X^{2}.$ The commutativity of \eqref{eqdiag2} and \eqref{eqdiag3*} implies that
\begin{equation}\label{eqdiag4}
\begin{array}{cccccc}
X^{2} & \xrightarrow{\ \ \pi\otimes\pi \ \ \ \ } &
(X/\coherent{0})^{2} & \xrightarrow{\ \ \delta_{d} \ \ \ \ } & \delta_{d}(X/\coherent{0})^{2}\\
\!\! \!\! \!\! \!\! \! \Phi\otimes\Phi\Bigg\downarrow &  & \! \!\Bigg\downarrow \Psi\otimes\Psi & & \! \!\Bigg\downarrow f\\
X^{2} & \xrightarrow{\ \ \pi\otimes\pi \ \ \ \ } & (X/\coherent{0})^{2}& \xrightarrow{\ \ \delta_{d} \ \ \ \ } & \delta_{d}(X/\coherent{0})^{2}\\
\end{array}
\end{equation}
is commutative. By Proposition~\ref{ch2:p1} we have $$\delta_{d}(X/\coherent{0})^{2}=d(X^{2})$$ and, in addition, this proposition implies the equality of mappings $X^{2}\xrightarrow{\ \ d \ \  }d(X^{2})$ and $$X^{2} \xrightarrow{\ \ \pi\otimes\pi \ \  }(X/\coherent{0})^{2} \xrightarrow{\ \ \delta_{d} \ \ }\delta_{d}(X/\coherent{0})^{2}.$$ Hence, the commutativity of \eqref{eqdiag4} gives us the commutativity of the diagram
\begin{equation*}\label{eqdiag3}
\begin{array}{ccc}
X^{2} & \xrightarrow{\ \ d \ \ \ \ } &
d(X^{2}) \\
\!\! \!\! \!\! \!\! \! \Phi\otimes\Phi\Bigg\downarrow &  & \! \!\Bigg\downarrow f
\\
X^{2} & \xrightarrow{\ \ d \ \ \ \ } & d(X^{2}).
\end{array}
\end{equation*}
By Definition~\ref{d1.2}, the last diagram is commutative iff $\Phi: X\to X$ is a combinatorial self-similarity of $(X, d).$
\end{proof}

The next theorem can be considered as one of the main results of the paper.

\begin{theorem}\label{Th3.6}
Let $(X, d)$ be a nonempty pseudometric space and let $\{X_{j}: j\in J\}$ be a partition of $X$corresponding the equivalence relation $\coherent{0(d)}.$ Then $(X, d)\in\mathcal{IP}$ if and only if \begin{equation}\label{Th3.6eq1}|X_{j_1}|\ne |X_{j_2}|\end{equation} holds whenever $j_1, j_2\in J$ are distinct and, in addition, at least one of the following conditions has been fulfilled:
\begin{enumerate}
\item \((X, d)\) is strongly rigid;
\item \((X, d)\) is discrete;
\item  $(X, d)$ is a pseudorectangle.
\end{enumerate}
\end{theorem}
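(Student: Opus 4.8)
The plan is to prove the two implications separately, noting that most of the work has already been carried out in the excerpt. For the ``if'' direction, assume \eqref{Th3.6eq1} holds for all distinct $j_1,j_2\in J$ and that at least one of (i)--(iii) holds. Then Corollary~\ref{cornew} gives $\mathbf{Cs}(X,d)=\mathbf{PI}(X,d)$, which is \eqref{d1.***-1}, while Theorem~\ref{Analogt2.4} gives $\mathbf{Cs}(X/\coherent{0},\delta_d)=\mathbf{Sym}(X/\coherent{0})$, which is \eqref{d1.***-2}; hence $(X,d)\in\mathcal{IP}$ by Definition~\ref{d1.***}. So the ``if'' part is immediate.

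For the ``only if'' direction, let $(X,d)\in\mathcal{IP}$. Then \eqref{d1.***-2} holds, so Theorem~\ref{Analogt2.4} yields at once that at least one of (i)--(iii) is satisfied; the only remaining task is to deduce \eqref{Th3.6eq1} from \eqref{d1.***-1}. I would argue by contraposition. Suppose there are distinct $j_1,j_2\in J$ with $|X_{j_1}|=|X_{j_2}|$; I will produce a $\Phi\in\mathbf{Cs}(X,d)\setminus\mathbf{PI}(X,d)$. First choose the permutation $\Psi\colon X/\coherent{0}\to X/\coherent{0}$ that interchanges the two $\coherent{0}$-classes $X_{j_1}$ and $X_{j_2}$ and fixes every other class; since \eqref{d1.***-2} holds, $\Psi\in\mathbf{Sym}(X/\coherent{0})=\mathbf{Cs}(X/\coherent{0},\delta_d)$. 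Next, using $|X_{j_1}|=|X_{j_2}|$, pick a bijection $g\colon X_{j_1}\to X_{j_2}$ and define $\Phi\colon X\to X$ by $\Phi|_{X_{j_1}}=g$, $\Phi|_{X_{j_2}}=g^{-1}$, and $\Phi(x)=x$ for $x\notin X_{j_1}\cup X_{j_2}$. Then $\Phi$ is a bijection of $X$, and, writing $\pi$ for the canonical projection, one checks directly that $\pi\circ\Phi=\Psi\circ\pi$, i.e.\ diagram \eqref{eqdiag1*} commutes: on $X_{j_1}$ both sides send a point to $X_{j_2}$, on $X_{j_2}$ both sides send a point to $X_{j_1}$, and on every other block both sides act as $\pi$. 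By Proposition~\ref{p3.5}, $\Phi\in\mathbf{Cs}(X,d)$. However, for any $x\in X_{j_1}$ we have $\Phi(x)=g(x)\in X_{j_2}$, and $x\notin X_{j_2}$ since $X_{j_1}\cap X_{j_2}=\varnothing$; hence $d(x,\Phi(x))\ne 0$, so $\Phi\notin\mathbf{PI}(X,d)$, contradicting \eqref{d1.***-1}. This establishes \eqref{Th3.6eq1}.

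A trivial case to dispatch is $|X/\coherent{0}|=1$: then $J$ is a singleton and \eqref{Th3.6eq1} is vacuously true, so the construction above is only needed when there exist two distinct blocks. The main (and essentially only) point requiring care is the verification that the lifted map $\Phi$ is a genuine counterexample --- that it is a bijection making \eqref{eqdiag1*} commute and that it moves some point outside its $\coherent{0}$-class --- but given Proposition~\ref{p3.5} this is a short and formal check. Once it is in place, the theorem follows by combining it with the already established Theorem~\ref{Analogt2.4} and Corollary~\ref{cornew}.
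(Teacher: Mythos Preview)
Your proposal is correct and follows essentially the same approach as the paper: the ``if'' direction combines Corollary~\ref{cornew} with Theorem~\ref{Analogt2.4}, and the ``only if'' direction first extracts (i)--(iii) from Theorem~\ref{Analogt2.4} and then, assuming two blocks have equal cardinality, lifts the transposition $\Psi$ on $X/\coherent{0}$ to a bijection $\Phi$ on $X$, invokes Proposition~\ref{p3.5} to get $\Phi\in\mathbf{Cs}(X,d)$, and observes $\Phi\notin\mathbf{PI}(X,d)$. Your write-up is in fact slightly more explicit than the paper's (you specify $\Phi$ via $g$ and $g^{-1}$ and verify the diagram pointwise), and you correctly cite Corollary~\ref{cornew} for the ``if'' part, whereas the paper cites Corollary~\ref{cor:p1.8}, which appears to be a slip.
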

\begin{proof}
Suppose that \eqref{Th3.6eq1} holds whenever $j_1, j_2\in J$ are distinct, and that al least one from conditions $(i)-(iii)$ has been fulfilled. Then the membership $(X, d)\in\mathcal{IP}$ follows from Corollary~\ref{cor:p1.8} and Theorem~\ref{Analogt2.4}.

Let $(X, d)$ belong to $\mathcal{IP}.$ Then the equality
\begin{equation}\label{Th3.6eq2}
\mathbf{Cs}(X/ \coherent{0}, \delta_d)= \mathbf{Sym} (X/\coherent{0})
\end{equation}
holds and, consequently, at least one from conditions $(i)-(iii)$ is valid. Thus, to complete the proof it suffices to show that \eqref{Th3.6eq1} is valid for all distinct $j_1, j_2\in J$.

Suppose contrary that there exist $j_1, j_2\in J$ such that $j_1\ne j_2$ but $|X_{j_1}|= |X_{j_2}|.$ Then there is a bijection $\Phi: X\to X$ which satisfies the equalities
\begin{equation}\label{Th3.6eq3}
\Phi(X_{j_1})=X_{j_2} \quad\mbox{and}\quad \Phi(X_{j})=X_{j}
\end{equation}
whenever $j\in J$ and $j_1\ne j\ne j_2.$

Let $x_{j_1}$ and $x_{j_2}$ be some points of $X_{j_1}$ and $X_{j_2}$ respectively. Write
\begin{equation}\label{Th3.6eq4}
x_{1}^{*}=\pi(x_{j_1})\quad\mbox{and}\quad x_{2}^{*}=\pi(x_{j_2}),
\end{equation}
where $\pi$ is the canonical projection of $X$ on $X/ \coherent{0}$ and define a bijection $\Psi: X/ \coherent{0}\to X/ \coherent{0}$ as
\begin{equation}\label{Th3.6eq5}
\Psi(x): = \begin{cases}
x_{1}^{*} & \text{if } x = x_{2}^{*},\\
x_{2}^{*} & \text{if } x = x_{1}^{*},\\
x & \text{otherwise }.
\end{cases}
\end{equation}
It follows from \eqref{Th3.6eq3} $-$ \eqref{Th3.6eq5} that the diagram
\begin{equation*}\label{eqdiag1}
\begin{array}{ccc}
X & \xrightarrow{\ \ \pi \ \ \ \ } &
X/\coherent{0} \\
\!\! \!\! \!\! \!\! \! \Phi\Bigg\downarrow &  & \! \!\Bigg\downarrow \Psi
\\
X & \xrightarrow{\ \ \pi \ \ \ \ } & X/\coherent{0}
\end{array}
\end{equation*}
is commutative. Moreover, the mapping $\Psi$ is a combinatorial self-similarity of $(X/ \coherent{0}, \delta_d)$ by \eqref{Th3.6eq2}. Hence, $\Phi$ is a combinatorial self-similarity of $(X, d)$ by Proposition~\ref{p3.5}. Definition~\ref{d1.4} and \eqref{Th3.6eq3} imply
\begin{equation*}
\Phi\not\in\mathbf{PI}(X, d).
\end{equation*}
Thus, we have $\Phi \in\mathbf{Cs}(X, d)\setminus \mathbf{PI}(X, d),$ contrary to $(X, d)\in\mathcal{IP}.$
\end{proof}

We conclude this section with the following open problem closely related to Theorem~\ref{Analogt2.4} and Theorem~\ref{Th3.6}.

\begin{problem}
Describe the structure of pseudometric spaces $(X, d)$ for which
$$\mathbf{Cs}(X, d) = \mathbf{PI}(X, d).$$
\end{problem}

\section{From partitions of \(X\) to partitions of \(X^2\)}

It is well-known that for every nonempty set $X$ and arbitrary surjection $f \colon X \to Y$ the family
$$
P_{f^{-1}} \colon=\{ f^{-1}(y) \colon y \in Y \}
$$
is a partition of $X$, where $f^{-1}(y)$ is the inverse image of the singleton $\{y\}$,
$$
f^{-1}(y) \colon= \{x\in X \colon f(x)=y\}.
$$
In what follows we set
\begin{equation}\label{eqch4.1}
P_{d^{-1}}:=\{d^{-1}(t): t\in d(X^{2})\}
\end{equation}
for every nonempty pseudometric space $(X,d).$

In this section we describe the structure of the partition $P_{d^{-1}}$, when $(X, d)$ is strongly rigid or discrete, or $(X, d)$ is a pseudorectangle. This allows us to obtain new characteristics of $\mathcal{IP}$ spaces and expand Corollary~\ref{cor2.8} to strongly rigid pseudometric spaces and pseudorectangles.

\smallskip

The following lemma gives a ``constructive variant'' of Proposition~\ref{p1.6}.

\begin{lemma}\label{l3.1}
Let \(X\) be a nonempty set and let \(P = \{X_j \colon j \in J\}\) be a partition of \(X\). If \(R\) is the equivalence relation corresponding to \(P\), then the equality \(R = \cup_{j \in J} X_{j}^{2}\) holds.
\end{lemma}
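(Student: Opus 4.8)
The plan is to verify the claimed set equality $R = \bigcup_{j \in J} X_j^2$ by the usual two-way inclusion between binary relations viewed as subsets of $X^2$, using the description of $R$ provided by Proposition~\ref{p1.6}: the equivalence relation corresponding to the partition $P$ is characterized by the logical equivalence
\begin{equation*}
(\<x, y> \in R) \Leftrightarrow (\exists j \in J \colon x \in X_j \text{ and } y \in X_j)
\end{equation*}
valid for every $\<x, y> \in X^2$. So the whole lemma reduces to observing that the right-hand side of this biconditional is literally the statement $\<x, y> \in \bigcup_{j \in J} X_j^2$.

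First I would fix an arbitrary pair $\<x, y> \in X^2$ and note that $\<x, y> \in \bigcup_{j \in J} X_j^2$ holds if and only if there is some $j \in J$ with $\<x, y> \in X_j^2$, i.e.\ with $x \in X_j$ and $y \in X_j$, since $X_j^2 = X_j \times X_j = \{\<a,b> \colon a \in X_j,\ b \in X_j\}$ by definition of the Cartesian square. Then by the quoted equivalence from Proposition~\ref{p1.6} this is exactly the condition $\<x, y> \in R$. Since $\<x, y>$ was arbitrary, $R$ and $\bigcup_{j \in J} X_j^2$ contain the same elements of $X^2$, hence are equal as subsets of $X^2$, which is the asserted equality.

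There is essentially no obstacle here: the statement is an unwinding of definitions, and the only subtlety worth a sentence is that Proposition~\ref{p1.6} guarantees that such an $R$ exists and is \emph{unique}, so ``the equivalence relation corresponding to $P$'' is well defined and the verification above pins it down. If one prefers a self-contained argument not citing Proposition~\ref{p1.6}, one can instead check directly that $\bigcup_{j \in J} X_j^2$ is reflexive (every $x$ lies in some block because $P$ covers $X$), symmetric (obvious from the symmetry of the condition $x, y \in X_j$), and transitive (if $x, y \in X_{j_1}$ and $y, z \in X_{j_2}$ then $y \in X_{j_1} \cap X_{j_2}$ forces $j_1 = j_2$ since the blocks are pairwise disjoint, so $x, z \in X_{j_1}$), and that its equivalence classes are precisely the blocks $X_j$; but the one-line route through Proposition~\ref{p1.6} is cleaner and is the approach I would write up.
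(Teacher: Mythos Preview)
Your proof is correct: the characterization of $R$ from Proposition~\ref{p1.6} makes the equality $R=\bigcup_{j\in J}X_j^2$ an immediate unwinding of definitions, exactly as you argue. The paper itself does not give a proof at all but simply refers the reader to Theorem~6 in Kelley's \emph{General Topology}, so your write-up (either the one-line route through Proposition~\ref{p1.6} or the direct verification of reflexivity, symmetry, and transitivity) is more self-contained than what the paper provides.
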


For the proof of Lemma~\ref{l3.1} see, for example, Theorem~6 in \cite[p.~9]{Kelley1965}.

\begin{proposition}\label{l3.2}
Let \((X, d)\) be a pseudometric space and $\{X_j \colon j \in J\}$ be the quotient set of $X$ with respect to the equivalence relation \(\coherent{0}\). Then for any fixed non-zero element $t_0$ of $d(X^2)$, the following statements are equivalent:
\begin{enumerate}
\item \label{l3.2:s1} There are different \(j_1\), \(j_2 \in J\) such that
\begin{equation}\label{l3.2:e1}
d^{-1}(t_0) = (X_{j_1} \times X_{j_2}) \cup (X_{j_2} \times X_{j_1}).
\end{equation}
\item \label{l3.2:s2} The assertion
\begin{equation}\label{l3.2:e2}
\bigl((x \coherent{0} u) \text{ and } (y \coherent{0} v)\bigr) \text{ or } \bigl((x \coherent{0} v) \text{ and } (y \coherent{0} u)\bigr)
\end{equation}
is valid whenever
\begin{equation}\label{l3.2:e3}
d(x, y) = t_0 = d(u, v).
\end{equation}
\end{enumerate}
\end{proposition}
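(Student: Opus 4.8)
The plan is to prove the equivalence of \eqref{l3.2:s1} and \eqref{l3.2:s2} by showing each statement is equivalent to a common intermediate assertion describing the set $d^{-1}(t_0)$ as a union of ``rectangular'' pieces built from blocks of the partition. First I would establish $\eqref{l3.2:s1}\Rightarrow\eqref{l3.2:s2}$: assume \eqref{l3.2:e1} holds for some distinct $j_1,j_2\in J$, and suppose $d(x,y)=t_0=d(u,v)$. Then $\langle x,y\rangle$ and $\langle u,v\rangle$ both lie in $(X_{j_1}\times X_{j_2})\cup(X_{j_2}\times X_{j_1})$. Each of these two product sets is one of the blocks used to form $d^{-1}(t_0)$, and membership in such a product means $x$ and $y$ lie in $X_{j_1}$ and $X_{j_2}$ in one order or the other; by Lemma~\ref{l3.1} (or directly by \eqref{ch2:p1:e1}), lying in the same block $X_{j}$ is the same as being related by $\coherent{0}$. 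A short case analysis on which of the two products contains $\langle x,y\rangle$ and which contains $\langle u,v\rangle$ then yields \eqref{l3.2:e2}: if both lie in $X_{j_1}\times X_{j_2}$ we get $x\coherent{0}u$ and $y\coherent{0}v$; if both lie in $X_{j_2}\times X_{j_1}$ the same; and if they lie in opposite products we get $x\coherent{0}v$ and $y\coherent{0}u$.

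For the converse $\eqref{l3.2:s2}\Rightarrow\eqref{l3.2:s1}$, I would fix a witnessing pair: since $t_0\in d(X^2)$ and $t_0\neq 0$, choose $x_0,y_0\in X$ with $d(x_0,y_0)=t_0$, and let $j_1,j_2\in J$ be the indices with $x_0\in X_{j_1}$, $y_0\in X_{j_2}$. Because $\delta_d$ is a metric on $X/\coherent{0}$ by Proposition~\ref{ch2:p1} and $t_0\neq 0$, the classes $X_{j_1}$ and $X_{j_2}$ are distinct, so $j_1\neq j_2$. Now I prove the set equality \eqref{l3.2:e1} by double inclusion. For $\supseteq$: if $\langle x,y\rangle\in(X_{j_1}\times X_{j_2})\cup(X_{j_2}\times X_{j_1})$, then $\{x,y\}$ meets both $X_{j_1}$ and $X_{j_2}$, one point in each; using $d(x,x_0)=0$ or $d(x,y_0)=0$ (and symmetrically for $y$) together with the triangle inequality and $d(x_0,y_0)=t_0$ shows $d(x,y)=t_0$, i.e. $\langle x,y\rangle\in d^{-1}(t_0)$. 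For $\subseteq$: if $d(x,y)=t_0=d(x_0,y_0)$, apply hypothesis \eqref{l3.2:s2} with $u=x_0$, $v=y_0$ to conclude that either ($x\coherent{0}x_0$ and $y\coherent{0}y_0$) or ($x\coherent{0}y_0$ and $y\coherent{0}x_0$); in the first case $x\in X_{j_1}$ and $y\in X_{j_2}$, in the second $x\in X_{j_2}$ and $y\in X_{j_1}$, so in either case $\langle x,y\rangle\in(X_{j_1}\times X_{j_2})\cup(X_{j_2}\times X_{j_1})$. This gives \eqref{l3.2:e1}.

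The main subtlety I anticipate is the $\supseteq$ direction of the set equality in the converse, where one must verify that \emph{every} pair drawn from $X_{j_1}\times X_{j_2}$ (not just the chosen witness $\langle x_0,y_0\rangle$) has distance exactly $t_0$. This is where the triangle inequality does real work: from $d(x,x_0)=0$ and $d(y,y_0)=0$ one gets $d(x,y)\leqslant d(x,x_0)+d(x_0,y_0)+d(y_0,y)=t_0$ and the reverse inequality $t_0=d(x_0,y_0)\leqslant d(x_0,x)+d(x,y)+d(y,y_0)=d(x,y)$, hence $d(x,y)=t_0$. Equivalently, this is just the well-definedness of $\delta_d$ from Proposition~\ref{ch2:p1} applied to the classes $X_{j_1},X_{j_2}$, so one may simply cite \eqref{ch2:p1:e2} rather than rederive it. Everything else is bookkeeping with the correspondence between blocks and the relation $\coherent{0}$ and a finite case split, so no genuinely hard step remains once this point is handled.
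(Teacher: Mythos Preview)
Your proof is correct and follows essentially the same approach as the paper: a four-case split on the products $X_{j_1}\times X_{j_2}$ and $X_{j_2}\times X_{j_1}$ for \ref{l3.2:s1}$\Rightarrow$\ref{l3.2:s2}, and for \ref{l3.2:s2}$\Rightarrow$\ref{l3.2:s1} fixing a witnessing pair in $d^{-1}(t_0)$, reading off $j_1\neq j_2$ from $t_0\neq 0$, and proving \eqref{l3.2:e1} by double inclusion (the paper likewise cites the triangle inequality, symmetry, and \eqref{ch2:p1:e1} for $\supseteq$ and the hypothesis \ref{l3.2:s2} for $\subseteq$). The ``common intermediate assertion'' you mention in your plan never actually appears and is not needed---you simply prove both implications directly, just as the paper does.
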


\begin{proof}
\(\ref{l3.2:s1} \Rightarrow \ref{l3.2:s2}\). Let  different \(j_1\), \(j_2 \in J\) satisfy \eqref{l3.2:e1}. We must show that \eqref{l3.2:e3} implies \eqref{l3.2:e2} for all \(x\), \(y\), \(u\), \(v \in X\).

Suppose \eqref{l3.2:e3} holds. Then we have
\[
\<x, y>, \<u, v> \in d^{-1}(t_0).
\]
Since the sets \(X_{j_1} \times X_{j_2}\) and \(X_{j_2} \times X_{j_1}\) are disjoint, \eqref{l3.2:e1} implies that only the following cases are possible:
\begin{gather}
\label{l3.2:e4}
\<x, y> \in X_{j_1} \times X_{j_2} \text{ and } \<u, v> \in X_{j_1} \times X_{j_2}, \\
\label{l3.2:e5}
\<x, y> \in X_{j_2} \times X_{j_1} \text{ and } \<u, v> \in X_{j_2} \times X_{j_1}, \\
\label{l3.2:e6}
\<x, y> \in X_{j_1} \times X_{j_2} \text{ and } \<u, v> \in X_{j_2} \times X_{j_1}, \\
\label{l3.2:e7}
\<x, y> \in X_{j_2} \times X_{j_1} \text{ and } \<u, v> \in X_{j_1} \times X_{j_2}.
\end{gather}
The sets \(X_{j_1}\) and \(X_{j_2}\) are different elements of the quotient set \(X / {\coherent{0}}\). Hence, each of \eqref{l3.2:e4} and \eqref{l3.2:e5} implies \(x \coherent{0} u\) and \(y \coherent{0} v\). Analogously, \(x \coherent{0} v\) and \(y \coherent{0} u\) hold whenever \eqref{l3.2:e6} or \eqref{l3.2:e7} is valid. Thus, \ref{l3.2:s2} holds.

\(\ref{l3.2:s2} \Rightarrow \ref{l3.2:s1}\). Let \ref{l3.2:s2} hold and let \(x\), \(y\) be points of \(X\) satisfying
\begin{equation}\label{l3.2:e8}
d(x, y) = t_0.
\end{equation}
Since \(\{X_j \colon j \in J\}\) is the quotient set of \(X\) w.r.t \(\coherent{0}\), there are \(j_1^0\), \(j_2^0 \in J\) such that \(x \in X_{j_1^0}\) and \(y \in X_{j_2^0}\). Equality \eqref{l3.2:e8} and the inequality \(t_0 > 0\) imply that \(j_1^0 \neq j_2^0\).

We claim that \eqref{l3.2:e1} holds with \(j_1 = j_1^0\) and \(j_2 = j_2^0\). Indeed, the inclusion
\[
d^{-1}(t_0) \supseteq (X_{j_1^0} \times X_{j_2^0}) \cup (X_{j_2^0} \times X_{j_1^0})
\]
follows from the triangle inequality, the symmetric property of \(d\) and the definition of \(\coherent{0}\) (see \eqref{ch2:p1:e1}). Hence, to prove \eqref{l3.2:e1}, we must show that the membership
\begin{equation}\label{l3.2:e9}
\<u, v> \in (X_{j_1^0} \times X_{j_2^0}) \cup (X_{j_2^0} \times X_{j_1^0})
\end{equation}
is valid whenever
\begin{equation}\label{l3.2:e10}
d(u, v) = t_0.
\end{equation}
To complete the proof, it suffices to note that \eqref{l3.2:e8} and \eqref{l3.2:e10} imply \eqref{l3.2:e2}, \eqref{l3.2:e3}, and that \eqref{l3.2:e2} implies \eqref{l3.2:e9}.
\end{proof}



Comparing Definition~\ref{d4.3} with statement \ref{l3.2:s2} of Proposition~\ref{l3.2}, we see that \ref{l3.2:s2} can be considered as a singular version of the global property ``to be strongly rigid''. In Theorem~\ref{p4.6} below we characterize the strong rigidity of pseudometrics by ``globalization'' of statement \ref{l3.2:s1} of Proposition~\ref{l3.2}.

Let \(Q = \{X_j \colon j \in J\}\) be a partition of a nonempty set \(X\). Then we define a partition \(Q\,  {\otimes}_1 \, Q \) of \(X^{2}\) by the rule: ``If $|J|=1,$ then \(Q \,  {\otimes}_1 \, Q :=\{X^{2}\},\) otherwise \(B\subseteq X^{2}\) is a block of \(Q \,  {\otimes}_1 \, Q\) if and only if either \linebreak \(B = \bigcup_{j \in J} X_{j}^{2}\) or there are \emph{distinct} \(j_1\), \(j_2 \in J\) such that \(B = (X_{j_1} \times X_{j_2}) \cup (X_{j_2} \times X_{j_1})\)''.

The next theorem follows directly from Theorem~4.13 and Corollary~4.14 of \cite{DLAMH2020}.

\begin{theorem}\label{p4.6}
Let \((X, d)\) be a nonempty pseudometric space.
Then the following conditions are equivalent:
\begin{enumerate}
\item\label{p4.6:s1} \(d \colon X^{2} \to \RR\) is strongly rigid.
\item\label{p4.6:s3} If $Q$ is a partition corresponding to $\coherent{0},$ then $Q \,  {\otimes}_1 \, Q $ and $P_{d^{-1}}$ are equal,
    \begin{equation}\label{eqvspom}
    Q \,  {\otimes}_1 \, Q = P_{d^{-1}}.
    \end{equation}
\item\label{p4.6:s2} There is a partition \(Q\) of \(X\) such that \eqref{eqvspom} holds.
\end{enumerate}
\end{theorem}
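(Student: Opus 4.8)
\textbf{Proof proposal for Theorem~\ref{p4.6}.}

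The plan is to reduce everything to Proposition~\ref{l3.2}, which already equates the local strong-rigidity condition at a fixed nonzero value $t_0$ with the fact that $d^{-1}(t_0)$ has the special block form $(X_{j_1}\times X_{j_2})\cup(X_{j_2}\times X_{j_1})$. The chain of implications I would prove is $\ref{p4.6:s1}\Rightarrow\ref{p4.6:s3}\Rightarrow\ref{p4.6:s2}\Rightarrow\ref{p4.6:s1}$, since $\ref{p4.6:s3}\Rightarrow\ref{p4.6:s2}$ is trivial (take $Q$ to be the partition corresponding to $\coherent{0}$).

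For $\ref{p4.6:s1}\Rightarrow\ref{p4.6:s3}$: assume $d$ is strongly rigid and let $Q=\{X_j:j\in J\}$ be the partition induced by $\coherent{0}$. First dispose of the degenerate case $|J|=1$: then $d$ is the zero pseudometric, $d(X^2)=\{0\}$, $d^{-1}(0)=X^2$, and both sides of \eqref{eqvspom} equal $\{X^2\}$. Assume $|J|\geqslant 2$. The block $d^{-1}(0)$ equals $\bigcup_{j\in J}X_j^2$ by Lemma~\ref{l3.1} and the definition of $\coherent{0}$, which is one of the prescribed blocks of $Q\,{\otimes}_1\,Q$. For each nonzero $t_0\in d(X^2)$, strong rigidity gives statement \ref{l3.2:s2} of Proposition~\ref{l3.2} (the assertion \eqref{l3.2:e2} holds whenever \eqref{l3.2:e3} holds, since $x\coherent{0}u$ is exactly $d(x,u)=0$ and the remark following Definition~\ref{d4.3} equates strong rigidity with precisely that implication), hence statement \ref{l3.2:s1}: $d^{-1}(t_0)=(X_{j_1}\times X_{j_2})\cup(X_{j_2}\times X_{j_1})$ for some distinct $j_1,j_2$. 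Thus every block of $P_{d^{-1}}$ is a block of $Q\,{\otimes}_1\,Q$, i.e.\ $P_{d^{-1}}\subseteq Q\,{\otimes}_1\,Q$; by Lemma~\ref{l2.2} this inclusion forces equality \eqref{eqvspom}.

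For $\ref{p4.6:s2}\Rightarrow\ref{p4.6:s1}$: suppose $Q=\{Y_i:i\in I\}$ is some partition of $X$ with $Q\,{\otimes}_1\,Q=P_{d^{-1}}$. The first task is to show $Q$ is necessarily the partition corresponding to $\coherent{0}$. Indeed $d^{-1}(0)$ is a block of $P_{d^{-1}}$, hence a block of $Q\,{\otimes}_1\,Q$; but among the prescribed blocks, only $\bigcup_{i\in I}Y_i^2$ is reflexive (contains the diagonal), so $d^{-1}(0)=\bigcup_{i\in I}Y_i^2$, and by Lemma~\ref{l3.1} this is the equivalence relation corresponding to $Q$; on the other hand $d^{-1}(0)=\coherent{0}$ by definition, so $Q$ and the $\coherent{0}$-partition induce the same equivalence relation and hence coincide by Proposition~\ref{p1.6}. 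Now verify strong rigidity: take $x,y,u,v$ with $d(x,y)=d(u,v)=t_0\neq 0$. Then $\<x,y>$ and $\<u,v>$ lie in the same block of $P_{d^{-1}}=Q\,{\otimes}_1\,Q$, and since this block is not the diagonal block it has the form $(Y_{i_1}\times Y_{i_2})\cup(Y_{i_2}\times Y_{i_1})$ for distinct $i_1,i_2$; reasoning exactly as in Proposition~\ref{l3.2} through the four cases \eqref{l3.2:e4}–\eqref{l3.2:e7}, we get $(d(x,u)=d(y,v)=0)$ or $(d(x,v)=d(y,u)=0)$, which is \eqref{d4.3:e2}. Hence $d$ is strongly rigid.

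The main obstacle — really the only subtle point — is the bookkeeping in $\ref{p4.6:s2}\Rightarrow\ref{p4.6:s1}$: one must not assume a priori that the given partition $Q$ is the $\coherent{0}$-partition, and the argument that it must be hinges on identifying the diagonal-containing block of $Q\,{\otimes}_1\,Q$ with $d^{-1}(0)$ and then invoking uniqueness (Proposition~\ref{p1.6}). Everything else is a direct transcription of Proposition~\ref{l3.2} applied value-by-value, glued together by Lemma~\ref{l2.2}. Since the theorem is stated to follow directly from Theorem~4.13 and Corollary~4.14 of \cite{DLAMH2020}, the write-up can alternatively just cite those; but the self-contained route above is short and keeps the paper closed under its own lemmas.
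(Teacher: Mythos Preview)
Your argument is correct. The paper itself does not supply a proof of Theorem~\ref{p4.6}; it merely records that the statement ``follows directly from Theorem~4.13 and Corollary~4.14 of \cite{DLAMH2020}''. So there is no in-paper proof to compare against, and your self-contained route --- localizing via Proposition~\ref{l3.2} value by value, identifying the diagonal block via Lemma~\ref{l3.1}, and closing with Lemma~\ref{l2.2} --- is exactly the kind of internal derivation the paper could have included. One minor remark: in the step \ref{p4.6:s2}$\Rightarrow$\ref{p4.6:s1} you do slightly more work than necessary, since once you have established $d^{-1}(0)=\bigcup_{i\in I}Y_i^{2}$ you already know that membership in a common $Y_i$ forces distance zero, and the appeal to Proposition~\ref{p1.6} (full identification of $Q$ with the $\coherent{0}$-partition) is not strictly needed for the strong-rigidity conclusion; it is, however, harmless and makes the picture cleaner.
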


Similarly to $Q \,  {\otimes}_1 \, Q $ for every partition of $Q=\{X_{j}: j\in J\}$ of $X$ we define $Q \,  {\otimes}_2 \, Q$ as: ``If $|J|=1,$ then $Q \,  {\otimes}_2 \, Q :=\{X^{2}\},$ otherwise a set $B\subseteq X^{2}$ is a block of $Q \,  {\otimes}_2 \, Q $ if and only if either $B= \bigcup_{j\in J}X_{j}^{2}$ or $B=X^{2}\setminus \bigcup_{j\in J}X_{j}^{2}.$''

The following result is an analog of Theorem~\ref{p4.6} for discrete pseudometrics.

\begin{theorem}\label{analogTh}
Let \((X, d)\) be a nonempty pseudometric space.
Then the following conditions are equivalent:
\begin{enumerate}
\item\label{p4.6:s1_2} \(d \colon X^{2} \to \RR\) is discrete.
\item If $Q$ is a partition corresponding to $\coherent{0},$ then $Q \,  {\otimes}_2 \, Q $ and $P_{d^{-1}}$ are equal,
    \begin{equation}\label{eqvspom1}
    Q \,  {\otimes}_2 \, Q  = P_{d^{-1}}.
    \end{equation}
\item\label{p4.6:s2_2} There is a partition \(Q\) of \(X\) such that \eqref{eqvspom1} holds.
\end{enumerate}
\end{theorem}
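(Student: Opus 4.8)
The plan is to follow the three-implication cycle $(i)\Rightarrow(ii)\Rightarrow(iii)\Rightarrow(i)$, exactly mirroring the structure of Theorem~\ref{p4.6}, and to exploit Remark~\ref{r3.xx}, which tells us that $d$ is discrete precisely when $|d(X^2)|\leqslant 2$. Throughout, let $Q=\{X_j\colon j\in J\}$ be the partition of $X$ corresponding to $\coherent{0}$, and recall from Lemma~\ref{l3.1} that $\bigcup_{j\in J}X_j^{2}$ is exactly the relation $\coherent{0}$ itself, i.e. $\bigcup_{j\in J}X_j^{2}=d^{-1}(0)$.

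First I would prove $(i)\Rightarrow(ii)$. Since $d$ is discrete, $|d(X^2)|\leqslant 2$. If $|d(X^2)|=1$, then $d$ is the zero pseudometric, so $d^{-1}(0)=X^2$ and $X/\coherent{0}$ is a single block; then $|J|=1$, and by definition $Q\,{\otimes}_2\,Q=\{X^2\}=P_{d^{-1}}$, as required. If $|d(X^2)|=2$, say $d(X^2)=\{0,t_0\}$ with $t_0>0$, then $|J|\geqslant 2$ (some pair of points is at distance $t_0$, hence in distinct blocks), and $P_{d^{-1}}=\{d^{-1}(0),\,d^{-1}(t_0)\}$. By Lemma~\ref{l3.1}, $d^{-1}(0)=\bigcup_{j\in J}X_j^{2}$, and consequently $d^{-1}(t_0)=X^2\setminus\bigcup_{j\in J}X_j^{2}$. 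These are precisely the two blocks of $Q\,{\otimes}_2\,Q$ when $|J|\geqslant 2$, so \eqref{eqvspom1} holds.

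The implication $(ii)\Rightarrow(iii)$ is immediate: $(ii)$ exhibits the partition corresponding to $\coherent{0}$ as a witness $Q$ for the existential statement $(iii)$.

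Finally I would prove $(iii)\Rightarrow(i)$. Suppose $Q=\{X_j\colon j\in J\}$ is a partition of $X$ (not assumed a priori to be the one induced by $\coherent{0}$) with $Q\,{\otimes}_2\,Q=P_{d^{-1}}$. If $|J|=1$, then $P_{d^{-1}}=\{X^2\}$, so $|d(X^2)|=1\leqslant 2$ and $d$ is discrete. If $|J|\geqslant 2$, then by construction $Q\,{\otimes}_2\,Q$ has exactly two blocks, $\bigcup_{j\in J}X_j^{2}$ and its complement in $X^2$; hence $P_{d^{-1}}$ has exactly two blocks, which gives $|d(X^2)|=2\leqslant 2$, so $d$ is discrete by Remark~\ref{r3.xx}. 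The main thing to check carefully here is that the case distinction on $|J|$ is exhaustive and that in the $|J|\geqslant 2$ case the definition of $Q\,{\otimes}_2\,Q$ genuinely produces a two-block partition — this relies on $\bigcup_{j\in J}X_j^{2}$ being a proper nonempty subset of $X^2$, which holds because $|J|\geqslant 2$ forces $X$ to have at least two points in distinct blocks. I expect no serious obstacle; the only mild subtlety, shared with the proof of Theorem~\ref{p4.6}, is that in $(iii)$ the partition $Q$ need not be assumed equal to the one coming from $\coherent{0}$, so one argues directly from $|P_{d^{-1}}|\leqslant 2$ rather than first identifying $Q$. (In fact, once $d$ is known discrete, the equality $Q=$ the partition of $\coherent{0}$ follows from Lemma~\ref{l3.1} and a comparison of the zero-blocks, but this is not needed for the stated equivalence.)
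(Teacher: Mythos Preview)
Your proof is correct and follows essentially the same approach as the paper's: both argue the cycle $(i)\Rightarrow(ii)\Rightarrow(iii)\Rightarrow(i)$ using Lemma~\ref{l3.1} to identify $d^{-1}(0)=\bigcup_{j\in J}X_j^{2}$ and Remark~\ref{r3.xx} to translate discreteness into $|d(X^2)|\leqslant 2$. The only cosmetic difference is that the paper disposes of the zero-pseudometric case once at the outset, whereas you fold it into the case splits $|d(X^2)|=1$ and $|J|=1$ inside the individual implications.
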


\begin{proof}
The implications $(i)\Rightarrow (ii)\Rightarrow (iii)\Rightarrow (i)$ are evidently valid if $d$ is the zero pseudometric on $X.$ Let us consider the case when $|d(X^{2})|\geqslant 2.$

$(i)\Rightarrow (ii).$ Let $d$ be a discrete pseudometric and let $$Q=\{X_{j}: j\in J\}$$ be the partition of $X$ corresponding to the relation ${\coherent{0(d)}}$. By Lemma~\ref{l3.1}, the equality
\begin{equation*}
\left(\coherent{0(d)}\right)=\bigcup_{j\in J}X_{j}^{2}
\end{equation*}
holds. Using \eqref{ch2:p1:e1} we see that
\begin{equation}\label{t2.4eq}
d^{-1}(0)=\bigcup_{j\in J} X_{j}^{2}.
\end{equation}
It was noted in Remark~\ref{r3.xx} that the inequality $|d(X^{2})|\leqslant 2$ holds for discrete $d.$ The last inequality and $|d(X^{2})|\geqslant 2$ imply that $|d(X^{2})| = 2.$ Thus, the partition $P_{d^{-1}}$ of $X^{2}$ contains exactly two blocks. Since one of this block is $d^{-1}(0),$ the second one coincides with $X^{2}\setminus \bigcup\limits_{j\in J}X_{j}^{2}$ by \eqref{t2.4eq}. Now $(ii)$ follows from the definition of $Q \,  {\otimes}_2 \, Q .$

$(ii)\Rightarrow (iii).$ This implication is trivially valid.

$(iii)\Rightarrow (i).$ Let a partition $Q=\{X_{j}: j\in J\}$ of $X$ satisfy equality \eqref{eqvspom1}. Then this equality and the definition of $Q \,  {\otimes}_2 \, Q $ imply $|d(X^{2})|~=~2.$ Using Remark~\ref{r3.xx}, we see that $d$ is a discrete pseudometric.
\end{proof}

\begin{remark}
Theorem~\ref{analogTh} can be considered as a special case of Theo\-rem~3.9 from \cite{DLAMH2020}, that describes all mappings with domain $X^{2}$ which are combinatorially similar to discrete pseudometrics on $X.$
\end{remark}

Let $X$ be a set with $|X|\geqslant 4$ and let $Q=\{X_1, X_2, X_3, X_4\}$ be a partition of the set $X$. Let us denote by $Q \,  {\otimes}_3 \, Q $ a partition of $X^{2}$ having the blocks: $\mathop{\bigcup}\limits_{i=1, 4} X_{i}^{2},$

\begin{equation}\label{eq*}(X_1\times X_2)\cup(X_2\times X_1)\cup(X_3\times X_4)\cup(X_4\times X_3),\end{equation}

\begin{equation}\label{eq**}(X_1\times X_3)\cup(X_3\times X_1)\cup(X_2\times X_4)\cup(X_4\times X_2),\end{equation}

\begin{equation}\label{eq***}(X_1\times X_4)\cup(X_4\times X_1)\cup(X_2\times X_3)\cup(X_3\times X_2).\end{equation}

\begin{theorem}\label{thnew}
Let \((X, d)\) be a nonempty pseudometric space.
Then the following conditions are equivalent:
\begin{enumerate}
\item\label{thnew:s1} $(X ,d)$ is a pseudorectangle.
\item If $Q$ is a partition corresponding to $\coherent{0},$ then $Q \,  {\otimes}_3 \, Q $ and $P_{d^{-1}}$ are equal,
    \begin{equation}\label{eqvspom2}
    Q \,  {\otimes}_3 \, Q  = P_{d^{-1}}.
    \end{equation}
\item\label{thnew:s2} There is a partition \(Q\) of \(X\) such that \eqref{eqvspom2} holds.
\end{enumerate}
\end{theorem}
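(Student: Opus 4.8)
The plan is to prove the cycle of implications $(i)\Rightarrow(ii)\Rightarrow(iii)\Rightarrow(i)$, exactly as in the proofs of Theorems~\ref{p4.6} and \ref{analogTh}. The implication $(ii)\Rightarrow(iii)$ is trivial. The real content lies in $(i)\Rightarrow(ii)$ and in $(iii)\Rightarrow(i)$, and both will hinge on the structure of the metric reflection: by Lemma~\ref{l4.xx}$(iii)$ and Lemma~\ref{lem*}, a pseudorectangle is precisely a pseudometric space whose metric reflection is (combinatorially similar to) the four-vertex set of a Euclidean non-square rectangle, i.e. a four-point metric space $\{p_1,p_2,p_3,p_4\}$ in which the six pairwise distances realize exactly three distinct nonzero values, each value attained by a pair of \emph{opposite} edges: $\delta(p_1,p_2)=\delta(p_3,p_4)$, $\delta(p_1,p_3)=\delta(p_2,p_4)$, $\delta(p_1,p_4)=\delta(p_2,p_3)$, and these three values pairwise distinct.

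For $(i)\Rightarrow(ii)$, assume $(X,d)$ is a pseudorectangle and let $Q=\{X_1,X_2,X_3,X_4\}$ be the partition of $X$ corresponding to $\coherent{0}$ (it has exactly four blocks by Lemma~\ref{l4.xx}$(iii)$, after fixing a labelling so that $\pi(X_i)=p_i$). First, $d^{-1}(0)=\bigcup_{i}X_i^2$ by Lemma~\ref{l3.1} and \eqref{ch2:p1:e1}, which is the first listed block of $Q\otimes_3 Q$. For a nonzero value $t_0\in d(X^2)$, the equality $d(x,y)=\delta_d(\pi(x),\pi(y))$ from \eqref{e4.xx} shows that $\langle x,y\rangle\in d^{-1}(t_0)$ iff the pair $\langle\pi(x),\pi(y)\rangle$ is one of the ordered pairs of reflection-points realizing $t_0$; since $t_0$ is realized in the metric reflection by exactly one pair of opposite edges, say $\{p_1p_2,p_3p_4\}$, unwinding $\pi^{-1}$ gives $d^{-1}(t_0)=(X_1\times X_2)\cup(X_2\times X_1)\cup(X_3\times X_4)\cup(X_4\times X_3)$, i.e. a block of type \eqref{eq*}. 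As $t_0$ ranges over the three nonzero values we obtain exactly the three blocks \eqref{eq*}, \eqref{eq**}, \eqref{eq***}, so $P_{d^{-1}}=Q\otimes_3 Q$.

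For $(iii)\Rightarrow(i)$, suppose some partition $Q=\{X_1,X_2,X_3,X_4\}$ of $X$ satisfies \eqref{eqvspom2}. The partition $Q\otimes_3 Q$ has exactly four blocks, hence $|d(X^2)|=4$ and in particular $d$ is not the zero pseudometric; moreover the block $\bigcup_i X_i^2$ must be $d^{-1}(0)$, because $d^{-1}(0)$ is reflexive (contains the diagonal) while each of the other three candidate blocks \eqref{eq*}--\eqref{eq***} is disjoint from the diagonal, and a partition block is determined by any one of its points. Therefore $\coherent{0}$ is exactly $\bigcup_i X_i^2$, so $Q$ is the partition corresponding to $\coherent{0}$ and $|X/\coherent{0}|=4$. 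It then remains to read off from the shape of the three nonzero blocks that the metric reflection is a non-square rectangle: $\delta_d(p_i,p_j)$ depends only on which of \eqref{eq*}, \eqref{eq**}, \eqref{eq***} contains $X_i\times X_j$, and these three blocks pair up the edges of the $4$-point set into the three ``opposite-edge'' pairs, with the three associated values distinct since the blocks are distinct. Hence all three-point subspaces of $(X/\coherent{0},\delta_d)$ are isometric, and each is strongly rigid because in a non-square rectangle no three of the six distances coincide in the forbidden pattern; one still has to check genuinely that each three-point subconfiguration has three distinct side lengths, which is where Lemma~\ref{lem*} (or a direct case check) is invoked. By Lemma~\ref{l4.xx}$(iii)$, $(X,d)$ is a pseudorectangle.

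The main obstacle is the geometric bookkeeping in both directions: verifying that the combinatorial pattern encoded by the blocks \eqref{eq*}--\eqref{eq***} matches exactly the ``opposite edges carry equal length'' structure of a non-square rectangle, and conversely that this structure forces all three-point subspaces to be strongly rigid and isometric. This is precisely the content already packaged in Lemma~\ref{lem*} together with Lemma~\ref{l4.xx}$(iii)$, so the proof will be short once those are cited; the only real care needed is the labelling of blocks and the argument (via the diagonal) that the "big" block $\bigcup_i X_i^2$ is forced to be $d^{-1}(0)$.
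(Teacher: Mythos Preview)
Your proposal is correct and follows essentially the same cycle $(i)\Rightarrow(ii)\Rightarrow(iii)\Rightarrow(i)$ as the paper, with the same key steps: identifying $d^{-1}(0)=\bigcup_i X_i^2$ via the diagonal, counting $|d(X^2)|=4$, and matching the three nonzero fibers to the opposite-edge blocks \eqref{eq*}--\eqref{eq***}. The only packaging difference is that you route $(iii)\Rightarrow(i)$ through the metric reflection and Lemma~\ref{l4.xx}$(iii)$, whereas the paper verifies Definition~\ref{defrect} directly in $(X,d)$ by checking that every three-point metric subspace $Z$ satisfies $|d(Z^2)|=4$ (using the permutation symmetry of $Q\otimes_3 Q$); both are equivalent and equally short.
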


\begin{proof}
$(i)\Rightarrow (ii).$ Let $(X, d)$ be a pseudorectangle. 

By Definition~\ref{defrect}, there is a set $Y=\{y_1, y_2, y_3, y_4\}$, such that $Y\subseteq X$ and the sets $[y_i]_{\coherent{0}},$
\begin{equation*}\label{thnew:eq2*}
[y_i]_{\coherent{0}}=\{x\in X: d(x, y_i)=0\}, \, i=1, ..., 4,
\end{equation*}
are the equivalence classes of the relation $\coherent{0}.$

We claim that \eqref{eqvspom2} holds if
\begin{equation}\label{thnew:eq1}
Q=\{X_1, X_2, X_3, X_4\}
\end{equation}
with
\begin{equation}\label{thnew:eq3}
X_{i}=[y_i]_{\coherent{0}},\, i=1, ..., 4.
\end{equation}
To prove the last claim  we first show that $P_{d^{-1}}$ contains exactly four blocks, i.e.
\begin{equation}\label{thnew:eq4}
|d(X^{2})|=4
\end{equation}
holds.

Indeed, by Definition~\ref{defrect}, all three-point metric subspaces of $(X, d)$ are isometric that implies
\begin{equation}\label{thnew:eq5}
|d(X^{2})|\leqslant 4.
\end{equation}
Moreover, it is easy to see that a finite nonempty metric space $(Z, \rho)$ is strongly rigid if and only if the number of two-point subsets of $Z$ is the same as the number of non-zero elements of $\rho(Z^{2}),$
\begin{equation*}
|\rho(Z^{2})|=\frac{|Z||Z-1|}{2}+1.
\end{equation*}
In particular, a three-point metric subspace $S$ of the pseudometric space $(X, d)$ is strongly rigid if and only if $|d(S^{2})|=4.$ Consequently, Definition~\ref{defrect} implies
 \begin{equation}\label{thnew:eq6}
|d(X^{2})|\geqslant |d(S^{2})|=4
\end{equation}
whenever $S$ is a three-point metric subspace of $(X, d).$ Now, equality \eqref{thnew:eq4} follows from \eqref{thnew:eq5} and \eqref{thnew:eq6}.

Let us prove \eqref{eqvspom2}. By Lemma~\ref{l2.2}, it suffices to show that the inclusion
\begin{equation*}
P_{d^{-1}}\subseteq Q \,  {\otimes}_3 \, Q
\end{equation*}
holds, i.e., that
\begin{equation}\label{thnew:eq7}
d^{-1}(t)\in Q \,  {\otimes}_3 \, Q
\end{equation}
is valid for every $t\in d(X^{2}).$

Let $t_1\in d(X^{2})$ and $t_1>0$ hold. Then there exist two different points $y_{i_1}, y_{i_2}\in\{y_1, y_2, y_3, y_4\}$ such that $d(y_{i_1}, y_{i_2})=t_1.$ Without loss the generality, we assume that $i_1=1$ and $i_2=2.$ Then, by \eqref{thnew:eq3}, we have $$ y_{i_1}=y_1\in X_1 \quad\mbox{and}\quad y_{i_2}=y_2\in X_2,$$
 that implies
\begin{equation}\label{thnew:eq8}
d^{-1}(t_1)\supseteq (X_1\times X_2)\cup (X_2\times X_1).
\end{equation}
Since all triangles of the metric space $\{y_1, y_2, y_3, y_4\}$ are isometric and strongly rigid, Lemma~\ref{lem*} implies the equality
 \begin{equation}\label{thnew:eq9}
d(y_3, y_4)=t_1
\end{equation}
and, in addition,
\begin{equation}\label{thnew:eq10}
d(y_1, y_3)=d(y_2, y_4)\ne t_1\ne d(y_1, y_4)=d(y_2, y_3).
\end{equation}
Similarly \eqref{thnew:eq8}, equality \eqref{thnew:eq9} implies
\begin{equation*}
d^{-1}(t_1)\supseteq (X_3\times X_4)\cup (X_4\times X_3),
\end{equation*}
and, consequently,
\begin{equation*}
d^{-1}(t_1)\supseteq (X_1\times X_2)\cup (X_2\times X_1)\cup (X_3\times X_4)\cup (X_4\times X_3).
\end{equation*}
If the last inclusion is strict, then there are points $x_1, x_2\in X$ such that $d(x_1, x_2)=t_1$ and
\begin{equation*}
\<x_1, x_2> \in (X_1\times X_3)\cup (X_3\times X_1)\cup (X_2\times X_4)\cup (X_4\times X_2)\cup
\end{equation*}
\begin{equation*}
(X_1\times X_4)\cup (X_4\times X_1)\cup (X_2\times X_3)\cup (X_3\times X_2)
\end{equation*}
are satisfied, contrary to \eqref{thnew:eq10}. Thus, the equality
\begin{equation*}
d^{-1}(t_1)=(X_1\times X_2)\cup (X_2\times X_1)\cup (X_3\times X_4)\cup (X_4\times X_3)
\end{equation*}
holds, that together with
\begin{equation*}
d^{-1}(0)=\bigcup_{i=1}^{4}X_{i}^{2}
\end{equation*}
implies \eqref{thnew:eq7} for every $t\in d(X^{2}).$

$(ii)\Rightarrow (iii).$ This implication is trivially valid.

$(iii)\Rightarrow (i).$ Let equality \eqref{eqvspom2} hold with
$Q=\{X_1, X_2, X_3, X_4\}.$ We must prove that $(X, d)$ is a pseudorectangle.

As in the proof of Theorem~\ref{p4.6}, it can be shown that
\begin{equation}\label{thnew:eq11}
d^{-1}(0)=\bigcup_{i=1}^{4}X_{i}^{2}.
\end{equation}
Moreover, the definitions of $Q \,  {\otimes}_3 \, Q $ and \eqref{eqvspom2} imply the equality
\begin{equation}\label{thnew:eq12}
|d(X^{2})|=4.
\end{equation}
Let us consider a four-point set $Y=\{y_1, y_2, y_3, y_4\}$ such that $y_i\in X_i$ holds for every $i\in\{1, 2, 3, 4\}.$ Then, using \eqref{thnew:eq11}, we see that $Y$ is a four-point metric subspace of $(X, d)$ and, for every $x\in X$, there is $y\in Y$ such that $d(x, y)=0.$ Now, by Definition~\ref{defrect}, $(X, d)$ is a pseudorectangle if and only if all three-point metric subspaces of $(X, d)$ are strongly rigid and isometric.

Hence, using \eqref{thnew:eq12}, we see that $(X, d)$ is a pseudorectangle if and only if
\begin{equation}\label{thnew:eq13}
|d(Z^{2})|=4
\end{equation}
holds for every three-point metric subspace $Z$ of $(X, d).$
Let us consider arbitrary $z_1, z_2, z_3\in X$ such that $d(z_i, z_j)\ne 0$ for all distinct $i, j\in\{1,2, 3\}.$ Since every permutation
\begin{equation*}
\{X_1, X_2, X_3, X_4\}\rightarrow\{X_1, X_2, X_3, X_4\}
\end{equation*}
preserves the partition $Q \,  {\otimes}_3 \, Q $ of the set $X^{2}$, we can assume that $z_1\in X_1,$ $z_2\in X_2$ and $z_3\in X_3.$ Now \eqref{thnew:eq13} follows from \eqref{eq*}, \eqref{eq**} and \eqref{eq***}.
\end{proof}

Using Theorem~\ref{p4.6} and Theorems~\ref{analogTh} and \ref{thnew}, we obtain the following modification of Theorem~\ref{Th3.6}.

\begin{theorem}\label{ThCh4}
Let \((X, d)\) be a nonempty pseudometric space and let
\begin{equation*}
Q=\{X_{j}: j\in J\}
\end{equation*}
be a partition of $X$ corresponding to the equivalence relation $\coherent{0(d)}.$ Then $(X,d)\in\mathcal{IP}$ if and only if
\begin{equation*}
P_{d^{-1}}\in \{Q  \,  {\otimes}_1 \,Q, Q \,  {\otimes}_2 \, Q, Q \,  {\otimes}_3 \, Q \}
\end{equation*}
and
\begin{equation*}
|X_{j_1}|\ne |X_{j_2}|
\end{equation*}
holds whenever $j_1, j_2\in J$ are distinct.
\end{theorem}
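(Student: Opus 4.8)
The plan is to obtain Theorem~\ref{ThCh4} by combining Theorem~\ref{Th3.6} with the three combinatorial characterizations recorded in Theorems~\ref{p4.6}, \ref{analogTh}, and \ref{thnew}; no new construction is required, and both implications will come out simultaneously.

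First I would recall that, by Theorem~\ref{Th3.6}, the membership $(X,d)\in\mathcal{IP}$ is equivalent to the conjunction of the size condition ``$|X_{j_1}|\ne|X_{j_2}|$ whenever $j_1,j_2\in J$ are distinct'' with the disjunction ``$(X,d)$ is strongly rigid, or $(X,d)$ is discrete, or $(X,d)$ is a pseudorectangle''. Since the size condition appears verbatim in the statement to be proved, the whole task reduces to showing that this disjunction is equivalent to the membership $P_{d^{-1}}\in\{Q\,{\otimes}_1\,Q,\;Q\,{\otimes}_2\,Q,\;Q\,{\otimes}_3\,Q\}$, where $Q=\{X_j:j\in J\}$ is the partition corresponding to $\coherent{0(d)}$.

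Next I would invoke the equivalence of statements (i) and (ii) in each of Theorems~\ref{p4.6}, \ref{analogTh}, and \ref{thnew}. Each of them asserts precisely that, for the partition $Q$ associated with $\coherent{0}$, the pseudometric $d$ is strongly rigid (respectively, discrete; respectively, $(X,d)$ is a pseudorectangle) if and only if $P_{d^{-1}}$ equals $Q\,{\otimes}_1\,Q$ (respectively, $Q\,{\otimes}_2\,Q$; respectively, $Q\,{\otimes}_3\,Q$). Forming the disjunction of these three biconditionals yields exactly the required equivalence, and re-inserting the size condition through Theorem~\ref{Th3.6} proves both directions of Theorem~\ref{ThCh4} at once.

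The remaining points call for care rather than ingenuity, so I do not expect a genuine obstacle here. One should note that $Q\,{\otimes}_3\,Q$ is defined only when $Q$ has exactly four blocks; when $|J|\ne 4$ this symbol is simply absent from the three-element set in the statement, which is harmless because a pseudorectangle necessarily has $|X/\coherent{0}|=4$ by the third assertion of Lemma~\ref{l4.xx} (equivalently, by Theorem~\ref{thnew}). One should also verify the degenerate case $|J|=1$ of the zero pseudometric, where $Q\,{\otimes}_1\,Q=Q\,{\otimes}_2\,Q=\{X^2\}=P_{d^{-1}}$, the size condition holds vacuously, and $(X,d)\in\mathcal{IP}$ holds trivially, again in agreement with the claimed characterization. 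Beyond this bookkeeping the proof is immediate from the quoted results.
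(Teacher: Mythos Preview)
Your proposal is correct and follows exactly the approach indicated in the paper, which simply states that Theorem~\ref{ThCh4} is obtained from Theorem~\ref{Th3.6} via Theorems~\ref{p4.6}, \ref{analogTh}, and \ref{thnew}. Your write-up in fact supplies more detail than the paper does, including the careful handling of the cases $|J|\ne 4$ (where $Q\,{\otimes}_3\,Q$ is undefined) and $|J|=1$.
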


Let us extend Corollary~\ref{cor2.8} to pseudorectangles and strongly rigid pseudometric spaces. 

\begin{lemma}\label{Lemch4}
Let $(X, d)$ and $(X, \rho)$ be nonempty pseudometric spaces. If the equality
\begin{equation}\label{eqch4.2}
P_{d^{-1}}=P_{\rho^{-1}}
\end{equation}
holds, then the identical mapping $Id_{X}: X\to X$ is a combinatorial similarity of $(X, d)$ and $(X, \rho).$
\end{lemma}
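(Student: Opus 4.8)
The claim is that equality of the partitions $P_{d^{-1}}$ and $P_{\rho^{-1}}$ of $X^2$ forces $\mathrm{Id}_X$ to be a combinatorial similarity of $(X,d)$ and $(X,\rho)$. Recall from Definition~\ref{d1.2} that $\mathrm{Id}_X$ is a combinatorial similarity of $(X,d)$ and $(X,\rho)$ precisely when there exists a bijection $f\colon d(X^2)\to\rho(X^2)$ with $\rho(x,y)=f(d(x,y))$ for all $x,y\in X$. So the plan is simply to construct this $f$ directly from the hypothesis \eqref{eqch4.2}.

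\begin{proof}
Write $P_{d^{-1}}=\{d^{-1}(t)\colon t\in d(X^2)\}$ and $P_{\rho^{-1}}=\{\rho^{-1}(s)\colon s\in\rho(X^2)\}$ as in \eqref{eqch4.1}. These are partitions of $X^2$, the first indexed bijectively by $d(X^2)$ (distinct $t$ give disjoint nonempty blocks $d^{-1}(t)$) and the second indexed bijectively by $\rho(X^2)$. By \eqref{eqch4.2} these are the same partition of $X^2$, so every block $d^{-1}(t)$, $t\in d(X^2)$, coincides with some block $\rho^{-1}(s)$, $s\in\rho(X^2)$, and conversely. Since the blocks are nonempty and pairwise disjoint, for each $t\in d(X^2)$ there is a \emph{unique} $s\in\rho(X^2)$ with $d^{-1}(t)=\rho^{-1}(s)$; define $f(t):=s$. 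The map $f\colon d(X^2)\to\rho(X^2)$ is well defined. It is injective because $f(t_1)=f(t_2)=s$ gives $d^{-1}(t_1)=\rho^{-1}(s)=d^{-1}(t_2)$, hence $t_1=t_2$; and it is surjective because every block $\rho^{-1}(s)$ equals some $d^{-1}(t)$, whence $s=f(t)$. Thus $f$ is a bijection.

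It remains to check $\rho(x,y)=f(d(x,y))$ for all $x,y\in X$. Fix $x,y\in X$ and set $t:=d(x,y)$, so $\<x,y>\in d^{-1}(t)$. By construction $d^{-1}(t)=\rho^{-1}(f(t))$, so $\<x,y>\in\rho^{-1}(f(t))$, which means $\rho(x,y)=f(t)=f(d(x,y))$. Hence, taking $\Psi=\mathrm{Id}_X$ and this $f$ in Definition~\ref{d1.2}, the identity mapping $\mathrm{Id}_X\colon X\to X$ is a combinatorial similarity of $(X,d)$ and $(X,\rho)$.
\end{proof}

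There is essentially no obstacle here: the whole content is the bookkeeping observation that a partition of $X^2$ indexed by the value set of a function determines that function up to relabelling of the values, and that matching up two such indexings is exactly what a combinatorial similarity via $\mathrm{Id}_X$ asks for. The only point requiring a word of care is the uniqueness of the block $\rho^{-1}(s)$ matching a given $d^{-1}(t)$, which is immediate from the blocks being nonempty and pairwise disjoint.
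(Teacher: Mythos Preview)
Your proof is correct and follows essentially the same approach as the paper: both construct the bijection $f\colon d(X^2)\to\rho(X^2)$ by matching fibers $d^{-1}(t)=\rho^{-1}(f(t))$ and then verify $\rho(x,y)=f(d(x,y))$ to conclude via Definition~\ref{d1.2}. The paper phrases the verification as commutativity of a diagram with $\mathrm{Id}_{X^2}=\mathrm{Id}_X\otimes\mathrm{Id}_X$, while you argue it pointwise, but the content is identical.
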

\begin{proof}
Let \eqref{eqch4.2} hold. Then by \eqref{eqch4.1} we have
\begin{equation*}
\{d^{-1}(t): t\in d(X^{2})\}=\{\rho^{-1}(\tau): \tau\in \rho(X^{2})\}
\end{equation*}
and, consequently, there is a bijection $f:d(X^{2})\to \rho(X^{2})$ such that
\begin{equation}\label{eqch4.3}
f(t)=\tau \quad\mbox{if and only if}\quad \rho^{-1}(\tau)=d^{-1}(t)
\end{equation}
whenever $t\in d(X^{2}),$ $\tau\in\rho(X^{2}).$

Using \eqref{eqch4.3} it is easy to see that the diagram
\begin{equation}\label{eqch4.4}
\begin{array}{ccc}
X^{2} & \xrightarrow{\ \ d \ \ \ \ } &
d(X^{2}) \\
\!\! \!\! \!\! \!\! \! Id_{X^{2}}\Bigg\downarrow &  & \! \!\Bigg\downarrow f
\\
X^{2} & \xrightarrow{\ \ \rho \ \ \ \ } & \rho(X^{2}).
\end{array}
\end{equation}
is commutative, when $Id_{X^{2}}: X^{2}\to X^{2}$ is the identical mapping of $X^{2}.$

The mapping $Id_{X^{2}}$ coincides with the mapping $Id_{X}\otimes Id_{X},$
\begin{equation*}
Id_{X^{2}}(\< x, y>)=\<x, y>=\<Id_{X} (x), Id_{X}(y)>
\end{equation*}
holds for every $\< x, y>\in X^{2}.$ Thus the commutativity of \eqref{eqch4.4} implies the commutativity of
\begin{equation*}
\begin{array}{ccc}
X^{2} & \xrightarrow{\ \ d \ \ \ \ } &
d(X^{2}) \\
\!\! \!\! \!\! \!\! \! Id_{X}\otimes Id_{X}\Bigg\downarrow &  & \! \!\Bigg\downarrow f
\\
X^{2} & \xrightarrow{\ \ \rho \ \ \ \ } & \rho(X^{2}).
\end{array}
\end{equation*}
By Definition~\ref{d1.2}, the last diagram is commutative if and only if the mapping $Id_{X}: X\to X$ is a combinatorial similarity of $(X, d)$ and $(X, \rho).$
\end{proof}


\begin{proposition}\label{p4.9}
Let $(X, d)$ and $(X, \rho)$ be either pseudorectangles or nonempty strongly rigid pseudometric spaces. Then the following statements are equivalent:
\begin{enumerate}
\item The pseudometric spaces $(X,d)$ and $(X, \rho)$ are combinatorially similar.

\item The binary relations $\coherent{0(d)}$ and $\coherent{0 (\rho)}$ are the same.
\end{enumerate}
\end{proposition}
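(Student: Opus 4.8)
The plan is to prove the two implications separately. For $(i)\Rightarrow(ii)$ I would simply invoke Corollary~\ref{cor:p1.8}: since $d$ and $\rho$ are pseudometrics on one and the same nonempty set $X$ and the spaces $(X,d)$ and $(X,\rho)$ are combinatorially similar, the binary relations $\coherent{0(d)}$ and $\coherent{0(\rho)}$ coincide. This direction uses neither strong rigidity nor the pseudorectangle hypothesis, so the whole substance of the proposition lies in $(ii)\Rightarrow(i)$.

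For $(ii)\Rightarrow(i)$ I would argue as follows. Assume that $\coherent{0(d)}=\coherent{0(\rho)}$ and let $Q=\{X_j\colon j\in J\}$ be the common partition of $X$ into the classes of this relation. The key observation is that, for each of the two types of spaces in question, the partition $P_{d^{-1}}$ of $X^{2}$ is already determined by $Q$ alone. I would distinguish two cases. If $(X,d)$ and $(X,\rho)$ are strongly rigid, then Theorem~\ref{p4.6} applied to $d$ and to $\rho$ gives $P_{d^{-1}}=Q\,{\otimes}_1\,Q=P_{\rho^{-1}}$. If $(X,d)$ and $(X,\rho)$ are pseudorectangles, then by part~$(iii)$ of Lemma~\ref{l4.xx} the metric reflection of each of them has exactly four points, so $Q$ consists of four blocks and $Q\,{\otimes}_3\,Q$ is defined, and Theorem~\ref{thnew} applied to $d$ and to $\rho$ gives $P_{d^{-1}}=Q\,{\otimes}_3\,Q=P_{\rho^{-1}}$. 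In either case $P_{d^{-1}}=P_{\rho^{-1}}$, and Lemma~\ref{Lemch4} then says precisely that $\operatorname{Id}_X\colon X\to X$ is a combinatorial similarity of $(X,d)$ and $(X,\rho)$; hence these spaces are combinatorially similar, which is $(i)$.

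I do not expect a genuine obstacle here: the hard work is already contained in Theorems~\ref{p4.6} and \ref{thnew} and in Lemma~\ref{Lemch4}, and the proof is essentially an assembly of these facts once one notices that $P_{d^{-1}}$ is an invariant of the relation $\coherent{0(d)}$ within the two classes of spaces under consideration. The only point requiring a little care is the pseudorectangle case: one should check that $Q\,{\otimes}_3\,Q$ depends only on $Q$ as an unordered four-block partition and not on the labelling of its blocks as $X_1,\ldots,X_4$, and this is immediate because its three blocks other than $\bigcup_{i=1}^{4}X_i^{2}$ are precisely the three ways of splitting a four-element index set into two disjoint pairs.
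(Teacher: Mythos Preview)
Your proposal is correct and follows essentially the same route as the paper's own proof: both directions are handled exactly as you describe, with Corollary~\ref{cor:p1.8} for $(i)\Rightarrow(ii)$ and, for $(ii)\Rightarrow(i)$, the chain $P_{d^{-1}}=Q\,{\otimes}_k\,Q=P_{\rho^{-1}}$ (with $k=1$ via Theorem~\ref{p4.6} in the strongly rigid case and $k=3$ via Theorem~\ref{thnew} in the pseudorectangle case) followed by Lemma~\ref{Lemch4}. Your remark that $Q\,{\otimes}_3\,Q$ is label-independent because its nonzero blocks are exactly the three pair-matchings of a four-element set is a useful clarification that the paper leaves implicit.
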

\begin{proof}
$(i)\Rightarrow (ii).$ The validity of this implication follows from Corollary~\ref{cor:p1.8}.

$(ii)\Rightarrow (i).$ Let $(ii)$ hold. Suppose that both spaces $(X,d)$ and $(X, \rho)$ are strongly rigid.

Let $Q=\{X_{j}: j\in J\}$ be the partition of $X$ corresponding the equivalence relation $\coherent{0(d)}.$ Then, by Theorem~\ref{p4.6}, we have
\begin{equation}\label{eqch4.5}
P_{d^{-1}}=Q \,  {\otimes}_1 \, Q .
\end{equation}
Since the relations $\coherent{0(d)}$ and $\coherent{0 (\rho)}$ are the same, we have
\begin{equation}\label{eqch4.6}
P_{\rho^{-1}}=Q  \,  {\otimes}_1 \, Q.
\end{equation}
Equalities \eqref{eqch4.5} and \eqref{eqch4.6} imply the equality
\begin{equation*}
P_{d^{-1}} = P_{\rho^{-1}}.
\end{equation*}
Consequently, $(X, d)$ and $(X, \rho)$ are combinatorially similar by Lemma~\ref{Lemch4}.

For the case when $(X, d)$ and $(X, \rho)$ are pseudorectangles the validity of $(ii)\Rightarrow (i)$ can be proved similarly if apply Theorem~\ref{thnew} instead of Theorem~\ref{p4.6} and $Q \,  {\otimes}_3 \, Q $ instead of $Q\,  {\otimes}_1 \,Q.$
\end{proof}


In the next proposition we denote by $\mathfrak{c}$ the cardinality of the continu\-um.

\begin{proposition}\label{propCh4}
Let $X$ be a nonempty set, let $\equiv$ be an equivalence relation on $X$ and  $$Q=\{X_{j}: j\in J\}$$ be the partition of $X$ corresponding to $\equiv$. Then the following statements hold:
\begin{enumerate}
\item The inequality $|J|\leqslant\mathfrak{c}$ holds if and only if there is a strongly rigid pseudometric space $(X,d)$ such that
    \begin{equation}\label{eqch4D}
    (x\equiv y)\Leftrightarrow (d(x, y)=0)
    \end{equation}
is valid for all $x, y\in X.$
\item The equality $|J|=4$ holds if and only if there is a pseudorec\-tangle $(X,d)$ such that \eqref{eqch4D} is valid for all $x, y\in X.$
\end{enumerate}
\end{proposition}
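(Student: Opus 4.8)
\textbf{Proof proposal for Proposition~\ref{propCh4}.}

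The plan is to prove both statements in two directions each, with the ``only if'' directions being constructive and the ``if'' directions following from the structural theorems already established. For statement $(i)$, the ``if'' direction is immediate: given a strongly rigid pseudometric $d$ with \eqref{eqch4D}, Theorem~\ref{p4.6} gives $P_{d^{-1}} = Q \,{\otimes}_1\, Q$, and since the nonzero blocks of $Q\,{\otimes}_1\,Q$ are indexed by the two-element subsets $\{j_1,j_2\}$ of $J$, the set $d(X^2)$ has cardinality $1 + \binom{|J|}{2}$; because $d(X^2) \subseteq \RR$ this forces $\binom{|J|}{2} \leqslant \mathfrak{c}$, hence $|J| \leqslant \mathfrak{c}$. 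For the ``only if'' direction, assume $|J| \leqslant \mathfrak{c}$. Then I would choose an injection $g \colon \{\{j_1,j_2\} : j_1 \neq j_2 \in J\} \to (0,\infty)$ of the set of two-element subsets of $J$ into the positive reals (possible since that set also has cardinality at most $\mathfrak{c}$), and \emph{furthermore} arrange that the values $g(\{j_1,j_2\})$ all lie in a bounded interval, say $[1,2]$, so that the triangle inequality is automatic: define $d(x,y) = 0$ if $x \equiv y$ and $d(x,y) = g(\{j_1,j_2\})$ when $x \in X_{j_1}$, $y \in X_{j_2}$ with $j_1 \neq j_2$. One checks $d$ is a pseudometric (symmetry and $d(x,x)=0$ are clear; the triangle inequality holds because any genuine triple of distinct-fiber points gives three side lengths in $[1,2]$, and degenerate configurations reduce to trivial cases), that \eqref{eqch4D} holds by construction, and that $d$ is strongly rigid: if $d(x,y) = d(u,v) \neq 0$ then $\{x\text{'s fiber},\,y\text{'s fiber}\}$ and $\{u\text{'s fiber},\,v\text{'s fiber}\}$ are both mapped by $g$ to the same value, so by injectivity of $g$ they are the same unordered pair of fibers, which is exactly condition \eqref{d4.3:e2}.

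For statement $(ii)$, the ``if'' direction is again from the structure theory: if $(X,d)$ is a pseudorectangle with \eqref{eqch4D}, then by Lemma~\ref{l4.xx}$(iii)$ the metric reflection $(X/\coherent{0},\delta_d)$ has exactly four points, so $|J| = |X/\coherent{0}| = 4$. For the ``only if'' direction, assume $|J| = 4$, write $Q = \{X_1, X_2, X_3, X_4\}$, and I would build $d$ directly from the vertex set of a Euclidean non-square rectangle: fix four real numbers $0 < a < b$ with $a \neq b$ and the ``diagonal'' value $c = \sqrt{a^2+b^2}$ (so that $\{a,b,c\}$ are the three distinct pairwise distances of such a rectangle, and the rectangle is strongly rigid), and set $d(x,y) = 0$ if $x,y$ lie in the same block, while for $x \in X_i$, $y \in X_j$ with $i \neq j$ put $d(x,y)$ equal to the distance between vertices $i$ and $j$ of a labelled non-square rectangle $\{v_1,v_2,v_3,v_4\} \subset \RR^2$. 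Then $d$ inherits the triangle inequality from $\RR^2$ (checking the mixed cases where some of $x,y,z$ share a block reduces to the genuine metric rectangle or to trivial two-point cases), \eqref{eqch4D} holds, and a four-point transversal $\{y_1,y_2,y_3,y_4\}$ with $y_i \in X_i$ is isometric to the rectangle, hence strongly rigid; moreover every three-point metric subspace of $(X,d)$ is isometric to a three-vertex subset of the rectangle, which is strongly rigid, so by Definition~\ref{defrect}, $(X,d)$ is a pseudorectangle.

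I expect the main obstacle to be the \emph{verification of the triangle inequality} in the ``only if'' direction of part $(i)$, specifically making sure the construction handles blocks of arbitrary (including infinite) cardinality and all the degenerate point configurations uniformly. The clean fix is the boundedness trick: forcing all nonzero distances into an interval $[r, 2r]$ makes the triangle inequality hold vacuously for every triple, at no cost to injectivity (an interval of reals still has cardinality $\mathfrak{c}$), and this also sidesteps any worry about whether $J$ is finite or infinite. A secondary, more bookkeeping-type point is confirming that in part $(ii)$ \emph{every} three-point metric subspace — not merely the transversal $\{y_1,\dots,y_4\}$ and its triples — is strongly rigid and isometric; this follows because any three points with pairwise nonzero distances necessarily lie in three distinct blocks (four blocks, three points), and the induced distances depend only on which blocks are involved, so such a triple is isometric to the corresponding vertex-triple of the rectangle, all of which are strongly rigid and mutually isometric since a non-square rectangle's three-vertex subsets are all congruent right triangles. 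Once these routine checks are in place, appeals to Theorem~\ref{p4.6}, Lemma~\ref{l4.xx}, and Definition~\ref{defrect} close the argument.
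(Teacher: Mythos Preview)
Your proof is correct. The route differs from the paper's in two respects worth noting.

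For statement $(i)$, the paper simply cites Theorem~4.13 of \cite{DLAMH2020} and gives no argument; you supply a self-contained proof. Your construction---an injection of the two-element subsets of $J$ into $[1,2]$, so that the triangle inequality is automatic---is exactly the standard device, and your ``if'' direction via the block count of $Q\,{\otimes}_1\,Q$ is clean and correct.

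For statement $(ii)$, the paper builds $d$ abstractly: it chooses an injection $f\colon Q\,{\otimes}_3\,Q \to \RR$ sending the zero block to $0$ and the three nonzero blocks into $(1,2)$, defines $d$ as the composite $X^2 \xrightarrow{\pi\otimes\pi} Q\,{\otimes}_3\,Q \xrightarrow{f} \RR$, and then invokes Theorem~\ref{thnew} to conclude that $(X,d)$ is a pseudorectangle. You instead pull back the metric from an explicit Euclidean non-square rectangle and verify Definition~\ref{defrect} by hand, checking that every three-point metric subspace is a right triangle with legs $a,b$ and hypotenuse $\sqrt{a^2+b^2}$. The paper's approach cashes in on the machinery of $Q\,{\otimes}_3\,Q$ already developed; yours is more concrete and avoids that detour, at the cost of the extra bookkeeping you flagged (confirming all three-point metric subspaces, not just the transversal triples, are handled---which you do correctly by observing that three points at pairwise nonzero distance must occupy three distinct blocks). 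For the ``if'' direction of $(ii)$ the paper uses Theorem~\ref{thnew} where you use Lemma~\ref{l4.xx}$(iii)$; both yield $|J|=4$ immediately.
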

\begin{proof}
Statement $(i)$ was proved in Theorem~4.13 of \cite{DLAMH2020}. Let us prove the validity of $(ii)$.

Let $|J|=4$ hold. Let us consider an injective mapping $f:~Q~\,  {\otimes}_3 \,Q~\to~\mathbb R$  such that
\begin{equation}\label{eqch4.7}
f\left(\bigcup_{j\in J}X_{j}^{2}\right)=0
\end{equation}
and
\begin{equation}\label{eqch4.8}f(B)\in (1, 2),\end{equation}
whenever $B$ is a block of $Q\,  {\otimes}_3 \,Q $ defined by equalities \eqref{eq*} $-$ \eqref{eq***}.

Write $\equiv_3$ for the equivalence relation corresponding to the partition $Q\,  {\otimes}_3 \, Q$ and denote by $d$ the mapping
\begin{equation*}
\begin{array}{ccccc}
X^{2} & \xrightarrow{\ \pi\otimes\pi \ \ \ \ } &
Q\,  {\otimes}_3 \, Q & \xrightarrow{\ \ } \mathbb R, \\
\end{array}
\end{equation*}
where $\pi$ and $\pi \otimes \pi$ are, respectively, the canonical projection of $X$ on $X/\equiv$ and $X^2$ on $X^2/\equiv_3$. By Theorem~\ref{p4.6}, $(X,d)$ is a pseudorectangle. (We note only that \eqref{eqch4.8} implies the triangle inequality for $d$.) The definition of $d$ and Lemma~\ref{l3.1} imply that $Q$ is the partition of $X$ corresponding the equivalence relation $\coherent{0(d)}.$

Let us consider a pseudorectangle $(X, d)$ such that
$Q=\{X_{j}: j\in J\}$ is a partition corresponding to the equivalence relation $\coherent{0(d)}.$ Then the equality $|J|=4$ follows from Theorem~\ref{thnew}.
\end{proof}




\begin{remark}\label{rem*}
By Proposition~\ref{p4.9}, all strongly rigid pseudometric spaces (pseudorectangles) $(X, d)$ satisfying \eqref{eqch4D} are combinatorially similar. Thus, Proposition~\ref{propCh4} can be considered as a development of Proposition~\ref{ch2:p2}.
\end{remark}

\begin{theorem}\label{ThCh4_4.8}
Let $\mathcal{CL}$ be a maximal class of nonempty pseudometric spaces such that for every $(X, d)\in \mathcal{CL}$ and each pseudometric space $(Y, \rho)$ we have:
\begin{enumerate}
\item[$(i_1)$] $(Y, \rho)\in \mathcal{CL}$ whenever $(X, d)$ and $(Y, \rho)$ are pseudoisometric.

\item[$(i_2)$] If $(Y, \rho)\in\mathcal{CL},$ and $Y=X,$ and the relations  $\coherent{0(d)}$ and $\coherent{0 (\rho)}$ are the same, then the identical mapping $Id_{X}: X\to X$ is a combinatorial similarity of $(X, d)$ and $(Y, \rho).$

\item[$(i_3)$] Every nonempty subspace of $(X, d)$ belongs to $\mathcal{CL}.$
\end{enumerate}
Then exactly one from the following statements holds.
\begin{enumerate}
\item[$(ii_1)$] $\mathcal{CL}$ is the class of all strongly rigid pseudometric spaces.

\item[$(ii_2)$] $\mathcal{CL}$ is the class of all discrete pseudometric spaces.

\item[$(ii_3)$] $\mathcal{CL}$ is the union of the class of all pseudorectangles with the class of all strongly rigid pseudometric spaces $(X, d)$ satisfying the inequality $|d(X^{2})|\leqslant 4.$
\end{enumerate}
\end{theorem}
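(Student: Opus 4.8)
\textbf{Proof proposal for Theorem~\ref{ThCh4_4.8}.}

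The plan is to check, first, that each of the three listed classes does satisfy properties $(i_1)$, $(i_2)$, $(i_3)$, and then to show that any class $\mathcal{CL}$ with these properties must coincide with one of them — so that maximality singles out exactly these three. For the first half: property $(i_3)$ (hereditary under subspaces) is immediate from Definitions~\ref{d4.3} and \ref{defdis} for strong rigidity and discreteness, and for the class in $(ii_3)$ one notes that a subspace of a pseudorectangle is either again a pseudorectangle or a strongly rigid space with at most four distances (since all its three-point metric subspaces stay strongly rigid and isometric), and a subspace of a strongly rigid space with $|d(X^2)|\le 4$ stays strongly rigid with at most four distances. Property $(i_1)$ (invariance under pseudoisometry) follows from Remark~\ref{remnew} together with Lemma~\ref{l4.xx}: a pseudoisometry induces an isometry of metric reflections, and strong rigidity, discreteness, and ``being a four-point space all of whose triangles are strongly rigid isometric'' are all isometry invariants of the reflection, hence pseudometric-space invariants. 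Property $(i_2)$ is exactly the content of Proposition~\ref{p4.9} for strong rigidity and pseudorectangles, of Corollary~\ref{cor2.8} for discreteness; for the mixed class $(ii_3)$ one combines these (if $(X,d)$ and $(X,\rho)$ are both in the class and induce the same $\coherent{0}$, then either both are pseudorectangles, or both are strongly rigid, or one can still match $P_{d^{-1}}$ with $P_{\rho^{-1}}$ via the explicit forms $Q\otimes_1 Q$, $Q\otimes_3 Q$ and apply Lemma~\ref{Lemch4}).

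For the second, harder half, suppose $\mathcal{CL}$ satisfies $(i_1)$–$(i_3)$ and is maximal. The key structural observation is this: by $(i_1)$ and Remark~\ref{remnew}, membership in $\mathcal{CL}$ depends only on the isometry type of the metric reflection, so it suffices to determine which \emph{metric} spaces $(Z,\delta)$ lie in $\mathcal{CL}$. By $(i_3)$ every subspace of such a $(Z,\delta)$ lies in $\mathcal{CL}$, in particular every three-point subspace. Now I would run a case analysis on the three-point metric subspaces of a generic member. First I would argue that $(i_2)$ forces a strong homogeneity: given any metric space $(Z,\delta)\in\mathcal{CL}$ and any equivalence relation $\equiv$ on a set $X$ with four (or $\le\mathfrak c$) classes, Proposition~\ref{ch2:p2} resp.\ Proposition~\ref{propCh4} produces a discrete resp.\ strongly rigid pseudometric $d$ on $X$ with $\coherent{0(d)}=\equiv$; feeding such spaces through $(i_1)$ and $(i_2)$ shows that if $\mathcal{CL}$ contains \emph{one} space whose reflection is a single point, or a discrete two-point space, then by heredity and the cardinality freedom in Proposition~\ref{propCh4} the class is pulled toward one of the three canonical families. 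Concretely: if $\mathcal{CL}$ contains a space whose metric reflection has a non-strongly-rigid three-point subspace, then by Theorem~\ref{t2.4} that three-point reflection has $\mathbf{Cs}=\mathbf{Sym}$ only if it is discrete; tracking which ``distance patterns'' on three and four points are allowed, and using that $\mathcal{CL}$ is closed under taking subspaces and under replacing a pseudometric by a combinatorially-similar-via-identity one, one shows the allowed three-point patterns are either exactly ``strongly rigid'' (giving $(ii_1)$, with the four-point bound relaxed), or exactly ``$\le 2$ distances'' (giving $(ii_2)$), or ``strongly rigid, but globally at most four values appear'' together with the rectangle pattern (giving $(ii_3)$), and these are mutually exclusive maximal possibilities.

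The main obstacle I anticipate is the bookkeeping in that last case analysis — showing that no \emph{other} maximal class can arise, i.e.\ that the rectangle pattern and the ``$\le 4$ distances strongly rigid'' pattern must travel together (neither can be enlarged without forcing all strongly rigid spaces in, and neither strong rigidity alone nor discreteness alone can be mixed with the rectangle family) and that these exhaust the possibilities. This is where Theorem~\ref{Analogt2.4} and Lemma~\ref{lem*} do the real work: the list $(ii_1)$, $(ii_2)$, $(ii_3)$ mirrors exactly the trichotomy (strongly rigid)/(discrete)/(pseudorectangle-or-small-strongly-rigid) that already appeared there, so the proof amounts to verifying that properties $(i_1)$–$(i_3)$ are precisely the closure conditions that make those three trichotomy-classes the maximal ones, with the $|d(X^2)|\le4$ correction in $(ii_3)$ forced because a four-point strongly rigid space has exactly four distances and is a subspace-limit of pseudorectangles in the sense of $(i_3)$, while larger strongly rigid spaces are \emph{not} pseudoisometric to any pseudorectangle and so cannot be adjoined without violating maximality of the separate class $(ii_1)$. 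Once the allowed three-point-and-four-point patterns are pinned down, $(i_3)$ propagates them to the whole space and $(i_1)$ lifts the conclusion from reflections back to all pseudometric spaces, completing the identification and the ``exactly one'' dichotomy.
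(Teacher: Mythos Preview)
Your first half (verifying that the three candidate classes satisfy $(i_1)$--$(i_3)$) matches the paper and is fine, including the observation that $\mathcal{CL}_{pr}$ and $\mathcal{CL}_{st}^{4}$ are distinguished by having exactly four versus at most three $\coherent{0}$-classes, so $(i_2)$ never has to compare a pseudorectangle with a strongly rigid space.

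The second half has a genuine gap. The paper's key step, which your proposal does not contain, is this: for any $(X,d)\in\mathcal{CL}$ one passes to the metric reflection $(X/{\coherent{0}},\delta_d)$ (which lies in $\mathcal{CL}$ by $(i_1)$), takes an \emph{arbitrary} bijection $\Phi$ of $X/{\coherent{0}}$, and defines $\rho^{\Phi}(a,b):=\delta_d(\Phi(a),\Phi(b))$. Then $(X/{\coherent{0}},\rho^{\Phi})$ is isometric to $(X/{\coherent{0}},\delta_d)$, hence in $\mathcal{CL}$ by $(i_1)$; both are metrics, so they share the trivial $\coherent{0}$-relation; and now $(i_2)$ forces $\mathrm{Id}$ to be a combinatorial similarity between them, which unwinds to $\Phi\in\mathbf{Cs}(X/{\coherent{0}},\delta_d)$. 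Since $\Phi$ was arbitrary, $\mathbf{Cs}(X/{\coherent{0}},\delta_d)=\mathbf{Sym}(X/{\coherent{0}})$, and Theorem~\ref{Analogt2.4} applies. This is the only bridge from $(i_1)+(i_2)$ to the trichotomy, and without it your sentence ``by Theorem~\ref{t2.4} that three-point reflection has $\mathbf{Cs}=\mathbf{Sym}$ only if it is discrete'' is circular: you have not yet shown $\mathbf{Cs}=\mathbf{Sym}$ for anything in $\mathcal{CL}$.

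Your proposed substitute --- producing auxiliary discrete or strongly rigid pseudometrics on $X$ via Propositions~\ref{ch2:p2} and~\ref{propCh4} and ``feeding them through $(i_1)$ and $(i_2)$'' --- does not work as stated, because those freshly constructed spaces are not known to belong to $\mathcal{CL}$ (you have no pseudoisometry from them to a member of $\mathcal{CL}$), so $(i_2)$ cannot be invoked for them. Once you insert the $\rho^{\Phi}$ argument, the remaining case analysis the paper does (on the maximal cardinality of metric spaces in $\mathcal{CL}$ and on whether a four-point member is discrete, strongly rigid, or a rectangle) is exactly the bookkeeping you anticipated, and it also supplies the ``exactly one'' part via the easy non-inclusions $\mathcal{CL}_{st}\not\subseteq\mathcal{CL}_{di}$, $\mathcal{CL}_{st}\not\subseteq\mathcal{CL}_{pr}\cup\mathcal{CL}_{st}^{4}$, etc.
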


\begin{remark}\label{remch4}
The maximality of $\mathcal{CL}$ means that for every class $\mathcal{CL}^{o}$ of nonempty pseudometric spaces, the inclusion
$\mathcal{CL}^{o}\supseteq\mathcal{CL}$ implies the equality $\mathcal{CL}^{o}=\mathcal{CL}$ whenever $\mathcal{CL}^{o}$ satisfies conditions $(i_1) - (i_3)$ for every $(X, d)\in\mathcal{CL}^{o}$ and every pseudometric space $(Y, \rho).$
\end{remark}

\noindent\emph{Proof of Theorem~\ref{ThCh4_4.8}.}
Let $\mathcal{CL}^{*}$ be an arbitrary class of nonempty pseudometric spaces such that $(i_1) - (i_3)$ are valid with $\mathcal{CL}=\mathcal{CL}^{*}$ for every $(X, d)\in\mathcal{CL}^{*}$ and each pseudometric space $(Y, \rho).$

Let us consider an arbitrary $(X, d)\in\mathcal{CL}^{*}$ and let $(X/\coherent{0}, \delta_d)$ be the metric reflection of $(X, d)$. It follows directly from Definition~\ref{defnew*} and Proposition~\ref{ch2:p1} that the canonical projection $\pi: X\to X/\coherent{0}$ is a pseudoisometry of $(X, d)$ and $(X/\coherent{0}, \delta_d).$ Hence, $(X/\coherent{0}, \delta_d)$ belongs to $\mathcal{CL}^{*}$ by condition $(i_1).$ Since $(X/\coherent{0}, \delta_d)$ is a metric space, the relation $\coherent{0(\delta_d)}$ is the identical relation on $X/\coherent{0},$ i.e., for all $a, b\in X/\coherent{0},$ $\<a, b>\in \coherent{0(\delta_d)}$ holds if and only if $a=b.$

Let $\Phi:X/\coherent{0}\to X/\coherent{0}$ be an arbitrary bijection of $X/\coherent{0}.$ The function $\rho^{\Phi}: (X/\coherent{0})^{2}\to\mathbb R$ defined as
\begin{equation}\label{eqch4.9}
\rho^{\Phi}(a, b)=\delta_{d}(\Phi(a), \Phi(b))
\end{equation}
is a metric on $X/\coherent{0}$ and $\Phi$ is an isometry of the metric spaces $(X/\coherent{0}, \delta_d)$ and $(X/\coherent{0}, \rho^{\Phi}).$ Since every isometry is a pseudoisometry, the membership $(X/\coherent{0}, \delta_d)\in\mathcal{CL}^{*}$ implies $(X/\coherent{0}, \rho^{\Phi})\in\mathcal{CL}^{*}$ by condition $(i_2).$ Furthermore, we have
\begin{equation*}
\left(\<a, b>\in \coherent{0(\rho)}\right)\Leftrightarrow \left(\rho^{\Phi}(a, b)=0\right)
\end{equation*}
for all $a,b\in X/\coherent{0},$ because $\rho$ is a metric on $X/\coherent{0}.$ Thus, the relations $\coherent{0(\delta_d)}$ and $\coherent{0(\rho^{\Phi})}$ are the same. Consequently, by condition $(i_2),$ the identical mapping of $X/\coherent{0}$ is a combinatorial similarity of $(X/\coherent{0}, \delta_d)$ and $(X/\coherent{0}, \rho^{\Phi}).$ Now using \eqref{eqch4.9} and Definition~\ref{d1.2}, we obtain that $\Phi$ is a combinatorial self-similarity of $(X/\coherent{0}, \delta_d).$ Since $\Phi$ is an arbitrary element of $\mathbf{Sym} (X/\coherent{0}),$ the equality
\begin{equation*}
\mathbf{Cs}(X/\coherent{0}, \delta_d)=\mathbf{Sym} (X/\coherent{0})
\end{equation*}
holds. Consequently, by Theorem~\ref{Analogt2.4}, at least one from the following statements is valid:

$-$ $(X, d)$ is strongly rigid;

$-$ $(X, d)$ is discrete;

$-$ $(X, d)$ is a pseudorectangle.

For convenience we denote by $\mathcal{CL}_{st}$ and $\mathcal{CL}_{di}$ the classes of all strongly rigid pseudometroc spaces and all discrete ones respectively. In addition, write $\mathcal{CL}_{pr}$ for the class of all pseudorectangles and $\mathcal{CL}_{st}^{4}$ for the class of all $(X, d)\in\mathcal{CL}_{st}$ satisfying the inequality $|d(X^{2})|\leqslant 4.$

It was shown above that

\begin{equation}\label{eqch4.11}
\mathcal{CL}^{*}\subseteq \mathcal{CL}_{st}\cup\mathcal{CL}_{di}\cup\mathcal{CL}_{pr}.
\end{equation}

Let us prove that $(i_1) - (i_3)$ hold with $\mathcal{CL}=\mathcal{CL}^{**}$ for every $(X, d)~\in~\mathcal{CL}^{**}$ and every pseudometric space $(Y, \rho),$ if
\begin{equation}\label{eqch4.10}
\mathcal{CL}^{**}=\mathcal{CL}_{st} \quad\mbox{or}\quad \mathcal{CL}^{**}=\mathcal{CL}_{di}, \quad\mbox{or}\quad \mathcal{CL}^{**}=\mathcal{CL}_{pr}\cup\mathcal{CL}_{st}^{4}.
\end{equation}

Let \eqref{eqch4.10} hold and let $(X, d)\in\mathcal{CL}^{**}.$ Let us consider an arbitrary pseudometric space $(Y, \rho)$. If $(Y, \rho)$ and $(X, d)$ are pseudoisometric, then the metric reflection $(Y/\coherent{0(\rho)}, \delta_{\rho})$ and $(X/\coherent{0(d)}, \delta_{d})$ are isometric (see Remark~\ref{remnew}). By Lemma~\ref{l4.xx} we have $(X/\coherent{0(d)}, \delta_{d})\in\mathcal{CL}^{**}.$ Since $(Y/\coherent{0(\rho)}, \delta_{\rho})$ and $(X/\coherent{0(d)}, \delta_{d})$ are isometric, and \eqref{eqch4.10} holds, we obtain
$(Y/\coherent{0(\rho)}, \delta_{\rho})\in\mathcal{CL}^{**}$ directly from the definitions of $\mathcal{CL}_{st},$ $\mathcal{CL}_{di}$ and $\mathcal{CL}_{pr}.$ Hence, $(Y, \rho)\in\mathcal{CL}^{**}$ by Lemma~\ref{l4.xx}. Thus, condition $(i_1)$ is fulfilled.

If $(Y, \rho)\in\mathcal{CL}^{**}$ and $Y=X$ hold, then the identical mapping $Id_{X}: X\to X$ is a combinatorial similarity of $(X, d)$ and $(X, \rho)$ by Lemma~\ref{Lemch4}. Thus, $(i_2)$ is also valid. The validity of $(i_3)$ follows directly from: Definition~\ref{d4.3}, if $\mathcal{CL}^{**}=\mathcal{CL}_{st};$ Definition~\ref{defdis}, if $\mathcal{CL}^{**}=\mathcal{CL}_{di};$ Definition~\ref{defrect},  if $\mathcal{CL}^{**}=\mathcal{CL}_{pr}\cup\mathcal{CL}_{st}^{4}.$

We claim that at least one from the inclusions
\begin{equation}\label{eqch4.12}
\mathcal{CL}^{*}\subseteq\mathcal{CL}_{st}, \quad \mathcal{CL}^{*}\subseteq\mathcal{CL}_{di}, \quad \mathcal{CL}^{*}\subseteq\mathcal{CL}_{pr}\cup\mathcal{CL}_{st}^{4}.
\end{equation}
holds.

Let us denote by $\mathcal{CL}^{*}_{met}$ the subclass of all metric spaces which belong $\mathcal{CL}^{*}.$ Then, using \eqref{eqch4.11} and \eqref{eqch4.10}, and the definitions of the pseudorectangles, strongly rigid spaces and discrete spaces, we see that at least one from inclusions \eqref{eqch4.12} holds if and only if we have at least one from

\begin{equation}\label{eqch4.13}
\mathcal{CL}^{*}_{met}\subseteq\mathcal{CL}_{st}, \quad \mathcal{CL}^{*}_{met}\subseteq\mathcal{CL}_{di}, \quad \mathcal{CL}^{*}_{met}\subseteq\mathcal{CL}_{pr}\cup\mathcal{CL}_{st}^{4}.
\end{equation}

First of all we note that all inclusions in \eqref{eqch4.13} are simultaneously valid, if $|X|\leqslant 2$ holds for every $(X, d)\in \mathcal{CL}^{*}_{met}.$

Suppose that $|X|~\leqslant~3$ holds for every $(X, d)\in\mathcal{CL}^{*}_{met}$ and there is $(Y, \rho)\in \mathcal{CL}^{*}_{met}$ such that $|Y|=3.$ Then, using $(i_1)$ and $(i_2)$ for every $(Y_1, \rho_1)\in \mathcal{CL}^{*}_{met}$ with $|Y_1|=3$ we can find a metric $\rho^{*}$ on $Y$ such that $(Y, \rho^{*}),$ $(Y_1, \rho_1)$ are isometric and, in addition, $(Y, \rho)$ and $(Y, \rho^{*})$ are combinatorially similar. It implies the equality
\begin{equation*}\label{eqch4.14}
|\rho(Y^{2})|=|\rho_{1}(Y_{1}^{2})|.
\end{equation*}
The last equality and \eqref{eqch4.11} imply that both $(Y, \rho^{*})$ and $(Y_1, \rho_1)$ are either strongly rigid or discrete. Consequently, we have
\begin{equation}\label{eqch4.15}
\mbox{either} \quad \mathcal{CL}^{*}_{met}\subseteq\mathcal{CL}_{st}, \quad \mbox{or} \quad \mathcal{CL}^{*}_{met}\subseteq\mathcal{CL}_{di}.
\end{equation}

Let us consider now the case, when there is $(Y, \rho)\in\mathcal{CL}^{*}_{met}$ such that $|Y|=4$ and $|X|\leqslant 4$ holds for every $(X, d)\in\mathcal{CL}^{*}_{met}.$

If there is $(Y, \rho)\in\mathcal{CL}^{*}_{met}$ such that $|Y|=4$ and $(Y, \rho)$ is discrete (strongly rigid) then, as in the case $|Y|=3,$ we obtain that every $(Y_1, \rho_1)\in\mathcal{CL}^{*}_{met}$ with $|Y_1|=4$ is discrete (strongly rigid). Moreover, every three-point subspace of $(Y, \rho)$ is also discrete (strongly rigid) and, consequently, using $(i_1) - (i_2),$ we can prove that all $(Z, d)\in\mathcal{CL}^{*}_{met}$ with $|Z|=3$ are also discrete (strongly rigid). Thus, \eqref{eqch4.15} holds.

Suppose that $(Y, \rho)\in\mathcal{CL}^{*}_{met}$ such that $|Y|=4$ but $(Y, \rho)$ is neither discrete nor strongly rigid. Then \eqref{eqch4.11} implies that $(Y, \rho)$ is a pseudorectangle. Then, using $(i_2)$ and Proposition~\ref{p4.9}, we obtain that every $(Y_1, \rho_1)\in\mathcal{CL}^{*}_{met}$ with $|Y_1|=4$ is also a pseudorectangle. Since every three-point subspace of $(Y, \rho)$ is strongly rigid, we can prove that all $(Z, d)\in\mathcal{CL}^{*}_{met}$ with $|Z|=3$ are also strongly rigid. Thus,
\begin{equation*}
\mathcal{CL}^{*}_{met}\subseteq\mathcal{CL}_{pr}\cup\mathcal{CL}_{st}^{4}.
\end{equation*}

To complete the proof that at least one inclusion from \eqref{eqch4.13} holds it suffices to show that \eqref{eqch4.15} holds if $\mathcal{CL}^{*}_{met}$ contains a metric space $(Y, \rho)$ with $|Y|\geqslant 5.$ The last inequality implies that $(Y, \rho)$ is not a pseudorectangle. Hence, by \eqref{eqch4.11}, $(Y, \rho)$ is either discrete or strongly rigid.

If $(Y, \rho)$ is discrete (strongly rigid), then arguing as above, we obtain that every
$(Y_1, \rho_1)\in\mathcal{CL}^{*}_{met}$ with $|Y_1|\leqslant |Y|$ is also discrete (strongly rigid). If $(Z, d)\in\mathcal{CL}^{*}_{met}$ and $|Z|>|Y|$ holds, then the discretness (the strong rigidity) of $(Z, d)$ implies the discretness (the strong rigidity) of $(Y, \rho).$ Thus, \eqref{eqch4.15} holds.

Let us prove now that the classes $\mathcal{CL}_{st},\, \mathcal{CL}_{di}$ and $\mathcal{CL}_{pr}\cup \mathcal{CL}_{st}^{4}$ are maximal in sense of Remark~\ref{remch4}. To see it suppose that $\mathcal{CL}^{o}$ is a class of nonempty pseudometric spaces such that
\begin{equation}\label{eqch4.16}
\mathcal{CL}^{o}\supseteq \mathcal{CL}_{st}
\end{equation}
and $(i_1) - (i_3)$ hold for every $(X, d)\in\mathcal{CL}^{o}$ and each pseudometric space $(Y, \rho).$ Then, using \eqref{eqch4.12} with $\mathcal{CL}^{*}=\mathcal{CL}^{o}$ we obtain at least one from the inclusions
\begin{equation}\label{eqch4.17}
\mathcal{CL}^{o}\subseteq\mathcal{CL}_{st}, \quad \mathcal{CL}^{o}\subseteq\mathcal{CL}_{di}, \quad \mathcal{CL}^{o}\subseteq\mathcal{CL}_{pr}\cup \mathcal{CL}_{st}^{4}.
\end{equation}

Now $\mathcal{CL}_{st}\not\subseteq\mathcal{CL}_{di}$ and $\mathcal{CL}_{st}\not\subseteq\mathcal{CL}_{pr}\cup \mathcal{CL}_{st}^{4}$ together with \eqref{eqch4.16} and \eqref{eqch4.17} imply that $$\mathcal{CL}_{st}\supseteq\mathcal{CL}^{o}\supseteq \mathcal{CL}_{st}.$$ Thus, \eqref{eqch4.16} implies that $\mathcal{CL}^{o} = \mathcal{CL}_{st},$ i.e. $\mathcal{CL}_{st}$ is maximal.

The maximality of $\mathcal{CL}_{di}$ and $\mathcal{CL}_{pr}\cup\mathcal{CL}_{st}^{4}$ can be proved similarly. $\Box$

\begin{corollary}
Let $(Z, l)$ be a nonempty pseudometric space. Then the equality
\begin{equation*}
\mathbf{Cs}(Z/\coherent{0}, \delta_l)=\mathbf{Sym} (Z/\coherent{0})
\end{equation*}
holds, if and only if there is a class $\mathcal{CL}$ of nonempty pseudometric spaces, which satisfies $(Z, l) \in \mathcal{CL}$ and conditions $(i_1) - (i_3)$ of Theorem~\ref{ThCh4_4.8} for every $(X, d) \in \mathcal{CL}$ and each nonempty pseudometric space $(Y, \rho).$
\end{corollary}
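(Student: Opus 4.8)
The plan is to extract this corollary from the material already developed for Theorem~\ref{ThCh4_4.8} together with the trichotomy of Theorem~\ref{Analogt2.4}, treating the two implications separately. Essentially nothing new has to be proved: one direction reuses the opening argument in the proof of Theorem~\ref{ThCh4_4.8}, and the other reuses the verification, carried out there, that three specific classes satisfy $(i_1)$--$(i_3)$.

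For the ``if'' direction, suppose $\mathcal{CL}$ is a class of nonempty pseudometric spaces with $(Z,l)\in\mathcal{CL}$ for which $(i_1)$--$(i_3)$ hold for every $(X,d)\in\mathcal{CL}$ and every nonempty pseudometric space. The opening part of the proof of Theorem~\ref{ThCh4_4.8} in fact establishes the following (unnamed) fact: if a class $\mathcal{CL}^{*}$ of nonempty pseudometric spaces satisfies $(i_1)$--$(i_3)$ for each of its members, then every $(X,d)\in\mathcal{CL}^{*}$ satisfies $\mathbf{Cs}(X/\coherent{0},\delta_d)=\mathbf{Sym}(X/\coherent{0})$. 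Applying this with $\mathcal{CL}^{*}=\mathcal{CL}$ and $(X,d)=(Z,l)$ yields the desired equality at once. To keep the corollary self-contained I would rather rerun the short argument directly for $(Z,l)$: the canonical projection $\pi\colon Z\to Z/\coherent{0}$ is a pseudoisometry, hence $(Z/\coherent{0},\delta_l)\in\mathcal{CL}$ by $(i_1)$; for an arbitrary $\Phi\in\mathbf{Sym}(Z/\coherent{0})$ the function $\rho^{\Phi}(a,b):=\delta_l(\Phi(a),\Phi(b))$ is a metric and $\Phi$ is an isometry of $(Z/\coherent{0},\rho^{\Phi})$ onto $(Z/\coherent{0},\delta_l)$, so $(Z/\coherent{0},\rho^{\Phi})\in\mathcal{CL}$ by $(i_1)$ and it shares with $(Z/\coherent{0},\delta_l)$ the relation $\coherent{0}$ (both being the identity relation, since both are metrics); condition $(i_2)$ then makes the identity mapping of $Z/\coherent{0}$ a combinatorial similarity of $(Z/\coherent{0},\delta_l)$ and $(Z/\coherent{0},\rho^{\Phi})$, and combining this with the definition of $\rho^{\Phi}$ through Definition~\ref{d1.2} gives $\Phi\in\mathbf{Cs}(Z/\coherent{0},\delta_l)$. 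As $\Phi$ is arbitrary, the equality follows.

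For the ``only if'' direction, assume $\mathbf{Cs}(Z/\coherent{0},\delta_l)=\mathbf{Sym}(Z/\coherent{0})$. By Theorem~\ref{Analogt2.4}, $(Z,l)$ is strongly rigid, or discrete, or a pseudorectangle. Inside the proof of Theorem~\ref{ThCh4_4.8} it was checked that each of the three classes $\mathcal{CL}_{st}$ (all strongly rigid pseudometric spaces), $\mathcal{CL}_{di}$ (all discrete pseudometric spaces) and $\mathcal{CL}_{pr}\cup\mathcal{CL}_{st}^{4}$ satisfies $(i_1)$--$(i_3)$ for every one of its members and every nonempty pseudometric space. I would therefore take $\mathcal{CL}$ to be $\mathcal{CL}_{st}$, $\mathcal{CL}_{di}$, or $\mathcal{CL}_{pr}\cup\mathcal{CL}_{st}^{4}$ according to which of the three alternatives holds (in the pseudorectangle case $(Z,l)\in\mathcal{CL}_{pr}$, so $(Z,l)$ lies in the chosen class). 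This produces a class with all the required properties, completing the proof.

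The one point needing care — what I would regard as the ``hard part'' — is purely bookkeeping: the fact invoked in the ``if'' direction is not isolated as a lemma but is buried in the proof of Theorem~\ref{ThCh4_4.8}, so I would make sure either to reproduce its short argument in full (as sketched above) or to factor it out as a separate lemma before the corollary is stated, so that no circular or implicit dependency remains. Everything else reduces to Theorem~\ref{Analogt2.4} and to the three class verifications already performed.
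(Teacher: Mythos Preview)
Your proposal is correct and is precisely the argument the paper has in mind: the corollary is stated without proof because both implications are immediate consequences of material inside the proof of Theorem~\ref{ThCh4_4.8} (the opening paragraph for the ``if'' direction, and the verification that $\mathcal{CL}_{st}$, $\mathcal{CL}_{di}$, $\mathcal{CL}_{pr}\cup\mathcal{CL}_{st}^{4}$ satisfy $(i_1)$--$(i_3)$ for the ``only if'' direction) together with Theorem~\ref{Analogt2.4}. Your remark that the key fact used in the ``if'' direction is buried rather than stated as a lemma is apt, and reproducing the short argument as you do is the cleanest way to make the corollary self-contained.
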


In connection with Theorem~\ref{ThCh4_4.8}, the following question naturally arises. Are conditions $(i_1) - (i_3)$ independent of each other?

\section*{Funding}

Viktoriia Bilet was partially supported by the Grant EFDS-FL2-08 of the found The European Federation of  Academies of Sciences and Humanities (ALLEA) and by a grant from the Simons Foundation (Award 1160640, Presidential Discretionary-Ukraine Support Grants).

Oleksiy Dovgoshey was supported by Finnish Society of Sciences and Letters.

\section*{Acknowledgment}

The authors grateful to the referees for their valuable suggestions.

\bibliographystyle{vancouver}

\bibliography{bib2021.04}

\end{document}